\DeclareMathOperator{\cO}{\ensuremath{\mathcal{O}}}
\DeclareMathOperator{\bR}{\ensuremath{\mathbb{R}}}
\DeclareMathOperator{\sgn}{\ensuremath{\mathrm{sgn}}}
\newtheorem{lemma}{Lemma}
\newtheorem{assumption}{Assumption}
\newtheorem{theorem}{Theorem}
\newtheorem{remark}{Remark}
\newcommand{\beq}{\begin{equation}}
\newcommand{\eeq}{\end{equation}}
\newcommand{\beqa}{\begin{eqnarray}}
\newcommand{\eeqa}{\end{eqnarray}}
\newcommand{\beqas}{\begin{eqnarray*}}
\newcommand{\eeqas}{\end{eqnarray*}}
\newcommand{\ba}{\begin{array}}
\newcommand{\ea}{\end{array}}
\newcommand{\bi}{\begin{statementize}}
\newcommand{\ei}{\end{statementize}}
\def\fl{{f_{\rm low}}}
\def\nn{{\nonumber}}
\def\scL{{\mathscr L}}
\title{Newton-CG methods for nonconvex unconstrained optimization with H\"older continuous Hessian}
\author{
Chuan He\thanks{Department of Mathematics, Link\"oping University, Sweden (email: {\tt chuan.he@liu.se}).}
\and
Heng Huang\thanks{Department of Computer Science, University of Maryland, USA (email: {\tt heng@umd.edu}).}
\and
Zhaosong Lu\thanks{Department of Industrial and Systems Engineering, University of Minnesota, USA (email: {\tt zhaosong@umn.edu}).}
}
\date{November 13, 2023 (Revised: December 10, 2024; April 13, 2025)}
\begin{document}
\maketitle
\begin{abstract}
In this paper we consider a nonconvex unconstrained optimization problem minimizing a twice differentiable objective function with H\"older continuous Hessian. Specifically, we first propose a Newton-conjugate gradient (Newton-CG) method for finding an approximate first- and second-order stationary point of this problem, assuming the associated the H\"older parameters are explicitly known. Then we develop a parameter-free Newton-CG method without requiring any prior knowledge of these parameters. To the best of our knowledge, this method is the first parameter-free second-order method achieving the best-known iteration and operation complexity for finding an approximate first- and second-order stationary point of this problem.  Finally, we present preliminary numerical results to demonstrate the superior practical performance of our parameter-free Newton-CG method over a well-known regularized Newton method.
\end{abstract}

\noindent{\small{\bf Keywords} Nonconvex unconstrained optimization, Newton-conjugate gradient method, H\"older continuity, iteration complexity, operation complexity}

\bigskip

\noindent{\small {\bf Mathematics Subject Classification} 49M15, 49M37, 58C15, 90C25, 90C30}

\section{Introduction}
In this paper we consider the nonconvex unconstrained optimization problem
\begin{equation}\label{ucpb}
\min_{x\in\bR^n} f(x),
\end{equation}
where $f:\bR^n\to\bR$ is twice continuously differentiable and $\nabla^2 f$ is H\"older continuous in an open neighborhood of a level set of $f$ (see Assumption~\ref{asp:NCG-cmplxity} for details). Our goal is to propose \emph{easily implementable} second-order methods with complexity guarantees,  particularly, Newton-conjugate gradient (Newton-CG) methods for finding approximate first- and second-order stationary points of problem \eqref{ucpb}. 

In recent years, there have been significant advancements in second-order methods with complexity guarantees for problem~\eqref{ucpb} when $\nabla^2 f$ is \emph{Lipschitz continuous}. Notably, cubic regularized Newton methods~\cite{AgAl17local,CaDu19,CaGoTo11,NePo06cubic}, trust-region methods~\cite{CuRoRoWr21,CuRoSa17,MaRa17}, second-order line-search method~\cite{RoWr18}, inexact regularized Newton method~\cite{CuRoSa19}, quadratic regularization method~\cite{BiMa17use}, and Newton-CG method~\cite{RNW18} were developed for finding an $(\epsilon,\sqrt{\epsilon})$-second-order stationary point (SOSP) $x$ of problem~\eqref{ucpb} satisfying
\[
\|\nabla f(x)\|\le\epsilon,\qquad \lambda_{\min}(\nabla^2 f(x))\ge-\sqrt{\epsilon},
\]
where $\epsilon\in(0,1)$ is a tolerance parameter and $\lambda_{\min}(\cdot)$ denotes the minimum eigenvalue of the associated matrix. Under suitable assumptions, it was shown that these second-order methods achieve an iteration complexity of $\cO(\epsilon^{-3/2})$ for finding an $(\epsilon,\sqrt{\epsilon})$-SOSP, which has been proved to be optimal in \cite{CaDuHiSi20,CaGoTo18}. In addition to iteration complexity, operation complexity of the methods in \cite{AgAl17local,CaDu19,CuRoRoWr21,RNW18,RoWr18}, measured by the number of their fundamental operations, was also studied. Under suitable assumptions, it was shown that these methods achieve an operation complexity of $\widetilde{\cO}(\epsilon^{-7/4})$ for finding an $(\epsilon,\sqrt{\epsilon})$-SOSP of problem~\eqref{ucpb} with high probability.\footnote{$\widetilde{\cO}(\cdot)$ represents $\cO(\cdot)$ with logarithmic terms omitted.} Similar operation complexity bounds have also been achieved by gradient-based methods (e.g., see \cite{AlLi18,CaDuHiSi17,CaDuHiSi18,JiNeJo18,LiLi22,MaTa22,XuJiYa17}).

Nonetheless, there has been limited study on second-order methods for problem \eqref{ucpb} -- a nonconvex unconstrained optimization problem with H\"older continuous Hessian. The regularized Newton methods proposed in \cite{cartis2011adaptive,cartis2019universal,cartis2020sharp,GeNe17NH, zhang2023riemannian} appear to be the only existing second-order methods for problem \eqref{ucpb}. Specifically, the cubic regularized Newton method in \cite{GeNe17NH} tackles problem \eqref{ucpb} by solving a sequence of cubic regularized Newton subproblems. It is a parameter-free second-order method and does not require any prior information on the modulus $H_\nu$ and exponent $\nu$  associated with the H\"older continuity (see Assumption~\ref{asp:NCG-cmplxity}). Under mild assumptions, it was shown in \cite{GeNe17NH} that this method enjoys an iteration complexity of
\begin{equation}\label{opt-2nd-ic}
\cO(H_\nu^{1/(1+\nu)}\epsilon_g^{-(2+\nu)/(1+\nu)})
\end{equation}
for finding an $\epsilon_g$-first-order stationary point (FOSP) $x$ of problem \eqref{ucpb} satisfying $\|\nabla f(x)\|\le\epsilon_g$. This iteration complexity matches the lower iteration complexity bound established in \cite{cartis2011optimal,CaGoTo18}. In another early work \cite{cartis2011adaptive}, a regularized Newton method, which solves a sequence of $(2+\nu)$th-order regularized Newton subproblems, has been proposed for solving problem \eqref{ucpb}. Iteration complexity of this method for finding an $\epsilon_g$-FOSP has been established, with the order  dependence on $\epsilon_g$ matching the optimal one in \eqref{opt-2nd-ic}. This method has been generalized in \cite{cartis2019universal,cartis2020sharp} to regularized high-order methods for solving nonconvex problems with H\"older continuous high-order derivatives. It shall be noted that  these methods \cite{cartis2011adaptive,cartis2019universal,cartis2020sharp,GeNe17NH} require solving regularized Newton or high-order polynomial optimization problems {\it exactly} per iteration, which may be highly expensive to implement in general.
Recently, in \cite{zhang2023riemannian}, two adaptive regularized Newton methods were proposed for finding an approximate SOSP  of the problem minimizing a nonconvex function with H\"older continuous Hessian on a Riemannian manifold, which includes problem \eqref{ucpb} as a special case. Specifically, when applied to problem \eqref{ucpb}, one method in \cite{zhang2023riemannian}  inexactly solves a sequence of $(2+\nu)$th-order regularized Newton subproblems, while another method in \cite{zhang2023riemannian} inexactly solves a sequence of trust-region subproblems. It has been shown in \cite{zhang2023riemannian} that their methods exhibit an iteration complexity of ${\cO}(\epsilon_g^{-(2+\nu)/(1+\nu)})$ for finding an $\epsilon_g$-FOSP of problem \eqref{ucpb}, which achieves the optimal order of dependence on $\epsilon_g$ as given in \eqref{opt-2nd-ic}. However, these methods in \cite{zhang2023riemannian} 
are \emph{not fully parameter-free} since prior knowledge of the H\"older exponent is required in order to achieve the best-known complexity.

As discussed above, the existing second-order methods \cite{cartis2011adaptive,cartis2019universal,cartis2020sharp,GeNe17NH, zhang2023riemannian} for problem \eqref{ucpb} require solving a sequence of sophisticated trust-region or regularized Newton subproblems. In this paper, we propose \emph{easily implementable} second-order methods, particularly Newton-CG methods for \eqref{ucpb}, by applying the capped CG method \cite[Algorithm~1]{RNW18} to solve a sequence of systems of linear equations with coefficient matrix resulting from a proper perturbation on the Hessian of $f$. Specifically, we first propose a Newton-CG method (Algorithm~\ref{alg:NCG-pd}) to find an approximate FOSP and SOSP of \eqref{ucpb}, assuming the parameters associated with the H\"older continuity of $\nabla^2 f$ are explicitly known. Then we develop a {\it parameter-free} Newton-CG method (Algorithm~\ref{alg:NCG}) for finding an approximate FOSP and SOSP of \eqref{ucpb} without requiring any prior knowledge of these parameters. We show that these methods achieve the best-known iteration and operation complexity for finding an approximate FOSP and/or SOSP of \eqref{ucpb}. Moreover, when $\nabla^2 f$ is Lipschitz continuous, our proposed methods achieve an improved iteration and operation complexity over the Newton-CG methods \cite{hlp2023ncgal,RNW18} in terms of the dependence on the Lipschitz constant of $\nabla^2 f$. In addition, preliminary numerical results are presented, demonstrating the practical advantages of our parameter-free Newton-CG method over the cubic regularized Newton method \cite{GeNe17NH}.

The main contributions of this paper are summarized as follows.
\begin{itemize}
\item  We propose a Newton-CG method (Algorithm~\ref{alg:NCG-pd}) to find an approximate FOSP and SOSP of \eqref{ucpb}, assuming that the parameters associated with the H\"older continuity of $\nabla^2 f$ are explicitly known. In contrast with the regularized Newton methods \cite{cartis2011adaptive,GeNe17NH, zhang2023riemannian}, our method is {\it easily implementable} and solves much {\it simpler subproblems} by a capped CG method, while achieving the best-known iteration and operation complexity.

\item We propose a {\it parameter-free} Newton-CG method (Algorithm~\ref{alg:NCG}) for finding an approximate FOSP and SOSP of \eqref{ucpb} without requiring any prior knowledge of these parameters. To the best of our knowledge, this is {\it the first fully parameter-free method} for finding an approximate FOSP and SOSP of \eqref{ucpb}, while achieving the best-known iteration and operation complexity. 

\end{itemize}


The rest of this paper is organized as follows. In Section~\ref{sec:not-as}, we introduce some notation and assumptions that will be used in the paper. In Section~\ref{sec:pd-ncg}, we propose a Newton-CG method for problem \eqref{ucpb} and study its complexity. In Section~\ref{sec:ncg}, we propose a parameter-free Newton-CG method for problem \eqref{ucpb} and study its complexity. Section~\ref{sec:num} presents preliminary numerical results. In Section~\ref{sec:proof}, we present the proofs of the main results. 

\section{Notation and assumptions}\label{sec:not-as}
Throughout this paper, we let $\bR^n$ denote the $n$-dimensional Euclidean space. We use $\|\cdot\|$ to denote the Euclidean norm of a vector or the spectral norm of a matrix. For any $s\in\bR$, we let $s_+$ and $\lceil s\rceil$ denote the nonnegative part of $s$ and the least integer no less than $s$, respectively, and we let $\sgn(s)$ be $1$ if $s\ge0$ and $-1$ otherwise. For a real symmetric matrix $H$, we use $\lambda_{\min}(H)$ to denote its minimum eigenvalue. 
In addition, $\widetilde{\cO}(\cdot)$ represents $\cO(\cdot)$ with logarithmic terms omitted.

We make the following assumptions on problem~\eqref{ucpb} throughout this paper.
\begin{assumption}\label{asp:NCG-cmplxity}
\begin{enumerate}[{\rm (a)}]
\item The level set $\scL_f(x^0):=\{x: f(x)\le f(x^0)\}$ is compact for some $x^0\in \bR^n$.
\item The function $f:\bR^n\to\bR$ is twice continuously differentiable, and $\nabla^2 f$ is H\"older continuous in a bounded convex open neighborhood, denoted by $\Omega$, of $\scL_f(x^0)$, i.e., there exist $\nu\in [0,1]$ and a finite $H_{\nu}>0$ such that
\begin{equation}\label{F-Hess-Lip}
\|\nabla^2 f(x)-\nabla^2 f(y)\|\le H_{\nu}\|x-y\|^{\nu},\quad \forall x,y\in \Omega.
\end{equation}
\end{enumerate}
\end{assumption}

It follows from Assumption~\ref{asp:NCG-cmplxity}(a) that there exist $\fl\in \bR$, $U_g>0$ and $U_H>0$ such that
\begin{equation}\label{lwbd-Hgupbd}
f(x)\ge \fl,\quad \|\nabla f(x)\|\le U_g,\quad \|\nabla^2 f(x)\|\le U_H, \quad \forall x\in\scL_f(x^0).
\end{equation}

We now make some remarks on Assumption~\ref{asp:NCG-cmplxity}(b).
\begin{remark}
\begin{enumerate}[{\rm (i)}]
\item When $\nu=1$, the condition~\eqref{F-Hess-Lip} corresponds to the standard Lipschitz continuity of $\nabla^2 f$. When $\nu=0$, the condition~\eqref{F-Hess-Lip} means that the variation of $\nabla^2 f$ on $\Omega$ is bounded, which is equivalent to the boundedness of $\nabla^2 f$ on $\Omega$. 
\item As a consequence of Assumption~\ref{asp:NCG-cmplxity}(b), the following two inequalities hold for all $x,y\in\Omega$ (e.g., see equations (2.7) and (2.8) in \cite{GeNe17NH}):
\begin{eqnarray}
&&\|\nabla f(y)-\nabla f(x)-\nabla^2 f(x)(y-x)\|\le \frac{H_{\nu}\|y-x\|^{1+\nu}}{1+\nu}, \label{apx-nxt-grad}\\
&&f(y)\le f(x) + \nabla f(x)^T(y-x) + \frac{1}{2}(y-x)^T\nabla^2 f(x) (y-x) + \frac{H_{\nu}\|y-x\|^{2+\nu}}{(1+\nu)(2+\nu)}. \label{desc-ineq}
\end{eqnarray}
\end{enumerate}
\end{remark}

\section{A Newton-CG method for problem~\eqref{ucpb}}\label{sec:pd-ncg}

In this section, we propose a Newton-CG method in  Algorithm~\ref{alg:NCG-pd} for finding  an $\epsilon_g$-FOSP and $(\epsilon_g,\epsilon_H)$-SOSP of problem~\eqref{ucpb}, assuming the parameters $H_\nu$ and $\nu$ associated with the H\"older continuity of $\nabla^2 f$ in \eqref{F-Hess-Lip} are explicitly known, and then analyze its complexity results.


Our Newton-CG method uses two important subroutines,  a capped CG method and a minimum eigenvalue oracle.  Specifically, the capped CG method is a modified CG method  proposed in \cite[Algorithm~1]{RNW18} for solving a possibly indefinite linear system
\begin{equation}\label{indef-sys}
	(H+2\varepsilon I)d = -g,
\end{equation}
where $0\neq g\in\bR^n$, $\varepsilon>0$, and $H\in\bR^{n\times n}$ is a symmetric matrix. It terminates within a finite number of iterations and returns either an approximate solution $d$ to \eqref{indef-sys} satisfying $\|(H+2\varepsilon I)d+g\|\le\hat{\zeta}\|g\|$ and $d^THd\ge -\varepsilon\|d\|^2$ for some $\hat{\zeta}\in(0,1)$ or a sufficiently negative curvature direction $d$ of $H$ with $d^THd<-\varepsilon\|d\|^2$. In addition, the minimum eigenvalue oracle was proposed in \cite[Procedure~2]{RNW18} to check whether a  sufficiently negative curvature direction exists for a symmetric matrix $H$. It either produces a sufficiently negative curvature direction $v$ of $H$ satisfying $\|v\|=1$ and $v^THv\le-\varepsilon/2$ or certifies that $\lambda_{\min}(H)\ge-\varepsilon$ holds with high probability. 
For ease of reference, we present the capped CG method  and the minimum eigenvalue oracle in Algorithms~\ref{alg:capped-CG} and \ref{pro:meo} in Appendices~\ref{appendix:capped-CG} and \ref{appendix:meo}, respectively.

We are now ready to introduce our Newton-CG method (Algorithm~\ref{alg:NCG-pd}) for solving problem~\eqref{ucpb}. This algorithm has two options: (i) when the tolerance $\epsilon_H\in(0,1)$ for second-order stationarity is not provided, this algorithm can find an $\epsilon_g$-FOSP $x$ of \eqref{ucpb} that satisfies $\|\nabla f(x)\|\le\epsilon_g$ for some $\epsilon_g\in(0,1)$; (ii) when such an $\epsilon_H$ is provided, this algorithm can find a stochastic $(\epsilon_g,\epsilon_H)$-SOSP $x$ of \eqref{ucpb}, satisfying $\|\nabla f(x)\|\le\epsilon_g$ deterministically and $\lambda_{\min}(\nabla^2 f(x))\ge-\epsilon_H$ with probability at least $1-\delta$.

Specifically, at each iteration $k$ of Algorithm~\ref{alg:NCG-pd}, if the current iterate $x^k$ does not satisfy $\|\nabla f(x^k)\|\le\epsilon_g$, the capped CG method (Algorithm~\ref{alg:capped-CG}) is invoked to find either an inexact Newton direction or a negative curvature direction by solving the following damped Newton system:
\begin{equation}\label{damp-n}
\big(\nabla^2 f(x^k) + 2(\gamma_\nu(\epsilon_g)\epsilon_g)^{1/2}I\big)d = - \nabla f(x^k),
\end{equation}
where $\gamma_\nu(\epsilon_g)$ is an inexact Lipschitz constant\footnote{In the literature (e.g., \cite{Dv17,ItLuHe23,Ne15}), inexact Lipschitz constant of $\nabla f$ has been used to design and analyze first-order methods for  problem~\eqref{ucpb}, where 
  $f$ has a H\"older continuous gradient.} of $\nabla^2 f$ defined as
\begin{equation}\label{gma-eps}
\gamma_{\nu}(\epsilon_g) := 4 H_{\nu}^{2/(1+\nu)}\epsilon_g^{-(1-\nu)/(1+\nu)}.
\end{equation}
The next iterate $x^{k+1}$ is generated by performing a line search along the descent direction obtained from solving \eqref{damp-n}. Otherwise, if $\|\nabla f(x^k)\|\le\epsilon_g$, this algorithm has two options. First, when the tolerance $\epsilon_H\in(0,1)$ for second-order stationarity is not provided, Algorithm~\ref{alg:NCG-pd} terminates with $x^k$ as an $\epsilon_g$-FOSP. Second, when such an $\epsilon_H$ is provided, a minimum eigenvalue oracle (Algorithm~\ref{pro:meo}) is further invoked to either obtain a sufficiently negative curvature direction and generate the next iterate $x^{k+1}$ via a line search, or certify that $x^k$ is an $(\epsilon_g,\epsilon_H)$-SOSP with high probability and terminates the algorithm. The details of this algorithm are presented in Algorithm~\ref{alg:NCG-pd}.

\begin{algorithm}[!tbh]
{\footnotesize
\caption{A Newton-CG method for problem~\eqref{ucpb}}
\label{alg:NCG-pd}
\begin{algorithmic}
\State {\bf input}: tolerance $\epsilon_g\in(0,1)$, starting point $x^0$, CG-accuracy parameter $\zeta\in(0,1)$, backtracking ratio $\theta\in(0,1)$, line-search parameter $\eta\in(0,1)$, probability parameter $\delta\in(0,1)$, $\gamma_{\nu}(\epsilon_g)$ given in \eqref{gma-eps}; {\bf optional input}: tolerance $\epsilon_H\in(0,1)$;
\For{$k=0,1,2,\ldots$}
\\\hrulefill 
\\ \Comment{{\it This part aims to improve first-order stationarity by calling Algorithm~\ref{alg:capped-CG}.}}
\vspace{1mm}
\If{$\|\nabla f(x^k)\|>\epsilon_g$} 
\State Call Algorithm~\ref{alg:capped-CG} (Appendix~\ref{appendix:capped-CG}) with $H=\nabla^2 f(x^k)$, $\varepsilon=(\gamma_\nu(\epsilon_g)\epsilon_g)^{1/2}$, $g=\nabla f(x^k)$, accuracy parameter $\zeta$, and $U=0$ to
\State obtain outputs $d$, d$\_$type;
\If{d$\_$type=NC}
\State Set
\begin{equation*}
d^k \leftarrow -\sgn(d^T\nabla f(x^k)){\max\{1,1/\gamma_\nu(\epsilon_g)\}}\frac{|d^T\nabla^2 f(x^k) d|}{\|d\|^3}d;
\end{equation*}
\Else\ \{d$\_$type=SOL\}
\State Set
\begin{equation*}
d^k \leftarrow d;
\end{equation*}
\EndIf
\State Go to \textbf{Line Search};
\ElsIf{$\|\nabla f(x^k)\|\le\epsilon_g$ and $\epsilon_H$ is not provided}
\State Output $x^k$ and terminate;
\EndIf
\\\hrulefill
\\ \Comment{{\it This part aims to improve second-order stationarity by calling Algorithm~\ref{pro:meo}.}}
\vspace{1mm}
\If{$\|\nabla f(x^k)\|\le\epsilon_g$ and $\epsilon_H$ is provided}
\State Call Algorithm~\ref{pro:meo} (Appendix~\ref{appendix:meo}) with $H=\nabla^2 f(x^k)$, $\varepsilon=\epsilon_H$, and probability parameter $\delta$;
\If{Algorithm~\ref{pro:meo} certifies that $\lambda_{\min}(\nabla^2 f(x^k))\ge-\epsilon_H$}
\State Output $x^k$ and terminate;
\Else\ \{Sufficiently negative curvature direction $v$ returned by Algorithm~\ref{pro:meo}\}
\State Set d$\_$type=MEO and
\begin{equation}\label{dk-nc-meo2}
    d^k \leftarrow -\mathrm{sgn}(v^T\nabla f(x^k))|v^T\nabla^2 f(x^k) v| v; 
\end{equation}
\State Go to \textbf{Line Search};
\EndIf
\EndIf
\\\hrulefill
\\ \Comment{{\it This part provides line search procedures.}}
\vspace{1mm}
\State \textbf{Line Search:}
\If{d$\_$type=SOL}
\State \textbf{if} $f(x^k+d^k)\le f(x^k)$ and $\|\nabla f(x^k+d^k)\|\le\epsilon_g$ \textbf{then} set $\alpha_k=1$;
\State \textbf{else} Find $\alpha_k=\theta^{j_k}$, where $j_k$ is the smallest nonnegative integer $j$ such that
\begin{equation}\label{ls-sol-stepsize}
f(x^k + \theta^j d^k) \le f(x^k) - \eta(\gamma_\nu(\epsilon_g)\epsilon_g)^{1/2}\theta^{2j}\|d^k\|^2;
\end{equation}
\State \textbf{end if}
\ElsIf{d$\_$type=NC}
\State Find $\alpha_k=\theta^{j_k}$, where $j_k$ is the smallest nonnegative integer $j$ such that
\begin{equation}\label{ls-nc-stepsize}
f(x^k+\theta^j d^k) \le f(x^k) - \eta\min\{1,\gamma_\nu(\epsilon_g)\}\theta^{2j}\|d^k\|^3/4;
\end{equation}
\ElsIf{d$\_$type=MEO}
\State Find $\alpha_k=\theta^{j_k}$, where $j_k$ is the smallest nonnegative integer $j$ such that
\begin{equation}\label{ls-meo-pd}
f(x^k+\theta^j d^k) \le f(x^k) - \eta\theta^{2j}\|d^k\|^3/2;
\end{equation}
\EndIf
\\\hrulefill
\\ \Comment{{\it  This part updates the next iterate.}}
\State Set $x^{k+1} = x^k + \alpha_k d^k$;
\EndFor
\end{algorithmic}
}
\end{algorithm}

The following theorem states the iteration and operation complexity of Algorithm~\ref{alg:NCG-pd} for finding an $\epsilon_g$-FOSP, whose proof is relegated to Section~\ref{subsec:proof1}.

\begin{theorem}\label{thm:c-pd}
Suppose that Assumption~\ref{asp:NCG-cmplxity} holds with some $H_\nu>0$ and $\nu\in[0,1]$, and $\epsilon_H$ is not provided for Algorithm~\ref{alg:NCG-pd}. Let $\epsilon_g \in (0,1)$ be given, $\fl$ and $U_H$ be given in \eqref{lwbd-Hgupbd}, $\gamma_\nu(\epsilon_g)$ be given  \eqref{gma-eps}, $\zeta$,  $\eta$, and $\theta$ be given in Algorithm~\ref{alg:NCG-pd}, and 
\begin{align}
& c_{\mathrm{sol}} := \eta \min\bigg\{\bigg(\frac{2}{4+\zeta + \sqrt{(4+\zeta)^2 + 1}}\bigg)^2, \frac{1}{6} \bigg(\frac{2(1-\eta)\theta}{3}\bigg)^2\bigg\},\qquad c_{\mathrm{nc}}:= \frac{\eta\theta^2}{4},\label{csol-cnc} \\
& K_1 := \left\lceil \frac{f(x^0)-\fl}{\min\{c_{\mathrm{sol}}, c_{\mathrm{nc}}\}}\gamma_\nu(\epsilon_g)^{1/2}\epsilon_g^{-3/2}\right\rceil + 1. \label{K1}
\end{align}
Then the following statements hold.
\begin{enumerate}[{\rm (i)}]
\item {\bf (iteration complexity)} Algorithm~\ref{alg:NCG-pd} terminates in at most $K_1$ iterations with 
\begin{align}\label{K1-order}
K_1 = \cO\big(H_\nu^{1/(1+\nu)}\epsilon_g^{-(2+\nu)/(1+\nu)}\big).
\end{align}
Moreover, its output $x^k$ satisfies $\|\nabla f(x^k)\|\le\epsilon_g$ for some $0\le k\le K_1$.
\item {\bf (operation complexity)} The main operations of Algorithm~\ref{alg:NCG-pd} consist of
\[
\widetilde{\cO}\big(H_\nu^{1/(1+\nu)}\epsilon_g^{-(2+\nu)/(1+\nu)}\min\big\{n,U_H^{1/2}/(H_\nu\epsilon_g^\nu)^{1/(2+2\nu)}\big\}\big)
\]
gradient evaluations and Hessian-vector products of $f$.
\end{enumerate}
\end{theorem}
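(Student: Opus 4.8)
The plan is to establish the iteration complexity first, and then piggyback on it for the operation complexity. For the iteration bound, the central object is the per-iteration decrease in $f$. I would break into the three direction types produced by Algorithm~\ref{alg:NCG-pd}: the inexact Newton step (d$\_$type=SOL), the capped-CG negative curvature step (d$\_$type=NC), and — though it is irrelevant when $\epsilon_H$ is not provided — one need not consider MEO here. So only SOL and NC steps occur. For each, the goal is a bound of the form $f(x^k) - f(x^{k+1}) \ge c \cdot (\gamma_\nu(\epsilon_g)\epsilon_g)^{1/2}\cdot(\text{something})$ that telescopes. The key point is that the damped system \eqref{damp-n} uses the shift $\varepsilon = (\gamma_\nu(\epsilon_g)\epsilon_g)^{1/2}$, and that by the definition \eqref{gma-eps} of $\gamma_\nu$ one has $\gamma_\nu(\epsilon_g)\epsilon_g \ge$ a quantity comparable to the "effective Lipschitz-like" modulus $H_\nu \epsilon_g^\nu$ raised to the right power; concretely $(\gamma_\nu(\epsilon_g)\epsilon_g)^{1/2} = 2 H_\nu^{1/(1+\nu)}\epsilon_g^{1/(1+\nu)}$. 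This is the substitute for $\sqrt{L\epsilon}$ in the Lipschitz ($\nu=1$) analysis of \cite{RNW18}, and it is chosen precisely so that the cubic-type descent inequality \eqref{desc-ineq} — which for general $\nu$ has a $\|y-x\|^{2+\nu}$ term rather than $\|y-x\|^3$ — can be absorbed. The subtle step is showing that along the line search, the Hölder term $\frac{H_\nu \|d^k\|^{2+\nu}}{(1+\nu)(2+\nu)}\theta^{(2+\nu)j}$ is dominated by the quadratic-in-$\theta^j$ model decrease, using that $\|d^k\|$ is controlled (by compactness of the level set and the a priori bounds \eqref{lwbd-Hgupbd}) and that $\theta^{(2+\nu)j}\le \theta^{2j}$ since $\theta<1$ and $\nu\ge0$; this is where one leverages $\gamma_\nu(\epsilon_g)\ge H_\nu^{2/(1+\nu)}\epsilon_g^{-(1-\nu)/(1+\nu)}$ up to a constant so that the shift $\varepsilon$ beats the Hölder coefficient. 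Once this is done for both step types, one also needs the standard fact that when the capped-CG returns a SOL-type step, $\|d^k\|$ is bounded below in terms of $\|\nabla f(x^k)\| > \epsilon_g$ (the inexact Newton direction cannot be too short because the residual is small and the matrix norm is bounded by $U_H + 2\varepsilon$); combined with $\|\nabla f(x^k)\|>\epsilon_g$ at every non-terminal iterate this yields the per-step decrease of order $(\gamma_\nu\epsilon_g)^{1/2}\cdot(\epsilon_g/(U_H+2\varepsilon))^2 \sim (\gamma_\nu\epsilon_g)^{1/2}\epsilon_g^2/\gamma_\nu = \epsilon_g^2/(\gamma_\nu\epsilon_g)^{1/2} \sim \epsilon_g^{3/2}/\gamma_\nu^{1/2}$. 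Wait — I must double check the exponent bookkeeping: the claimed telescoped bound is $K_1 \sim (f(x^0)-\fl)\gamma_\nu^{1/2}\epsilon_g^{-3/2}$, so the per-step decrease should be $\sim \gamma_\nu^{-1/2}\epsilon_g^{3/2}$, which matches. For the NC step, the capped-CG guarantee $d^T H d < -\varepsilon\|d\|^2$ together with the rescaling in the algorithm gives a decrease of order $\varepsilon^3/\gamma_\nu^2$ or similar after the cube normalization — again one checks it telescopes with the same order. Summing, $K_1 = \cO((f(x^0)-\fl)\min\{c_{\mathrm{sol}},c_{\mathrm{nc}}\}^{-1}\gamma_\nu(\epsilon_g)^{1/2}\epsilon_g^{-3/2})$, and substituting \eqref{gma-eps} gives $\gamma_\nu^{1/2} = 2H_\nu^{1/(1+\nu)}\epsilon_g^{-(1-\nu)/(2+2\nu)}$, hence $\gamma_\nu^{1/2}\epsilon_g^{-3/2} = 2H_\nu^{1/(1+\nu)}\epsilon_g^{-(1-\nu)/(2+2\nu) - 3/2} = 2H_\nu^{1/(1+\nu)}\epsilon_g^{-(2+\nu)/(1+\nu)}$, which is exactly \eqref{K1-order}. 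I would present this exponent identity explicitly.

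For part (ii), the operation complexity, the plan is: each iteration invokes the capped CG method on the system \eqref{damp-n} with coefficient matrix $\nabla^2 f(x^k) + 2\varepsilon I$, whose norm is at most $U_H + 2\varepsilon = \cO(U_H + (H_\nu\epsilon_g^\nu)^{1/(1+\nu)})$ and whose relevant lower eigenvalue shift is $\varepsilon = (H_\nu\epsilon_g^\nu)^{1/(1+\nu)}$ up to constants. By the iteration-complexity analysis of the capped CG method from \cite[Algorithm~1]{RNW18} (quoted in Appendix~\ref{appendix:capped-CG}), the number of CG inner iterations — equivalently, Hessian-vector products — is $\widetilde{\cO}(\min\{n, \sqrt{\|H\|/\varepsilon}\}) = \widetilde{\cO}(\min\{n, \sqrt{(U_H + \varepsilon)/\varepsilon}\})$; since $\epsilon_g<1$ makes $\varepsilon$ small, $(U_H+\varepsilon)/\varepsilon \sim U_H/\varepsilon = U_H/(H_\nu\epsilon_g^\nu)^{1/(1+\nu)}$, giving the per-iteration cost $\widetilde{\cO}(\min\{n, U_H^{1/2}/(H_\nu\epsilon_g^\nu)^{1/(2+2\nu)}\})$. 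Multiplying by the iteration bound $K_1$ from part (i) and noting the line search adds only $\cO(1)$ extra gradient/function evaluations per iteration in expectation (the backtracking count is bounded because the step-size lower bound follows from the same descent inequalities, so $j_k = \cO(1)$), we obtain the stated product bound.

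The main obstacle I anticipate is the careful verification, uniformly over $\nu\in[0,1]$, that the iterates remain in the neighborhood $\Omega$ where \eqref{desc-ineq} and \eqref{apx-nxt-grad} are valid — since the descent guarantee keeps $f(x^k)\le f(x^0)$ so $x^k\in\scL_f(x^0)$, but the trial points $x^k+\theta^j d^k$ during line search must also lie in $\Omega$, which requires $\|d^k\|$ small, hence a uniform upper bound on $\|d^k\|$ in terms of $U_g, U_H$ and the shift (for SOL this is $\|d^k\| \le \|(\nabla^2 f(x^k)+2\varepsilon I)^{-1}\|(\|\nabla f(x^k)\| + \hat\zeta\|\nabla f(x^k)\|) \le \frac{(1+\hat\zeta)U_g}{\varepsilon}$, which could be large when $\varepsilon$ is small). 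One handles this by taking $\epsilon_g$ small enough or, more carefully, by noting that the first accepted step already reduces $f$ and restricting attention to a slightly enlarged sublevel set; this is the delicate bookkeeping that the formal proof in Section~\ref{subsec:proof1} must get right, and the reason the constants $c_{\mathrm{sol}}, c_{\mathrm{nc}}$ in \eqref{csol-cnc} take the particular form they do. The remaining steps — the exponent arithmetic and the invocation of the \cite{RNW18} capped-CG bounds — are essentially routine once the descent lemmas are in place.
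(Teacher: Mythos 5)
Your plan has the right skeleton — establish a per-step decrease of order $\epsilon_g^{3/2}/\gamma_\nu(\epsilon_g)^{1/2}$ for both SOL and NC steps, telescope, and substitute $\gamma_\nu(\epsilon_g)^{1/2}\epsilon_g^{-3/2}=2H_\nu^{1/(1+\nu)}\epsilon_g^{-(2+\nu)/(1+\nu)}$; and the operation-complexity part (capped-CG cost $\widetilde{\cO}(\min\{n,(U_H/\varepsilon)^{1/2}\})$ with $\varepsilon=(\gamma_\nu(\epsilon_g)\epsilon_g)^{1/2}=2(H_\nu\epsilon_g^\nu)^{1/(1+\nu)}$, times $K_1$) is essentially the paper's. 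But the key estimate in the SOL case has a genuine gap. You bound $\|d^k\|$ from below via the spectral estimate $\|d^k\|\gtrsim\|\nabla f(x^k)\|/(U_H+2\varepsilon)\gtrsim\epsilon_g/U_H$, and then assert that $(\gamma_\nu\epsilon_g)^{1/2}(\epsilon_g/(U_H+2\varepsilon))^2\sim\epsilon_g^{3/2}/\gamma_\nu^{1/2}$. That identification requires $U_H+2\varepsilon\sim\gamma_\nu^{1/2}$, which is false: $U_H$ is a fixed constant, $\varepsilon=(\gamma_\nu\epsilon_g)^{1/2}\to 0$, and (for $\nu<1$) $\gamma_\nu^{1/2}\to\infty$ as $\epsilon_g\to 0$. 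Your bound actually gives per-step decrease $\sim(\gamma_\nu\epsilon_g)^{1/2}\epsilon_g^2/U_H^2$, which is smaller than $\epsilon_g^{3/2}/\gamma_\nu^{1/2}$ by a factor $\gamma_\nu\epsilon_g/U_H^2\to 0$, so the resulting iteration count is $\cO\big(U_H^2H_\nu^{-1/(1+\nu)}\epsilon_g^{-(2+3\nu)/(1+\nu)}\big)$, strictly worse than \eqref{K1-order}. The paper instead proves (Lemma~\ref{lem:next-FOSP-or-large-step}) that for a SOL step either the next iterate already satisfies $\|\nabla f(x^k+d^k)\|\le\epsilon_g$ or $6\|d^k\|\ge(\epsilon_g/\gamma_\nu(\epsilon_g))^{1/2}$. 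This much tighter lower bound is obtained not from the operator norm of $\nabla^2 f(x^k)+2\varepsilon I$, but from the H\"older Taylor inequalities \eqref{apx-nxt-grad}--\eqref{desc-ineq} combined with the CG certificates \eqref{SOL-ppty-1}--\eqref{SOL-ppty-3} and the inequality $(\gamma_\nu\epsilon_g)^{1/2}/H_\nu\ge 2^{1+\nu}(\epsilon_g/\gamma_\nu)^{\nu/2}$ (Lemma~\ref{lem:tech-gammamu-eps}); it is exactly what makes the constants $c_{\mathrm{sol}},c_{\mathrm{nc}}$ dimensionless. Your proof cannot be repaired without reproducing this lemma.

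Two secondary points. First, the obstacle you anticipate — keeping the line-search trial points inside $\Omega$ — is real but is not handled by shrinking $\epsilon_g$ or enlarging the sublevel set; the paper's line-search analysis (proofs of Lemmas~\ref{lem:next-FOSP-or-large-step}, \ref{lem:sol-1}, \ref{lem:nc-1}) splits into the cases $f(x^k+\theta^jd^k)>f(x^k)$ and $f(x^k+\theta^jd^k)\le f(x^k)$. In the first case one finds a local minimizer $\alpha_*$ of $\varphi(\alpha)=f(x^k+\alpha d^k)$ with $\varphi(\alpha_*)<\varphi(0)$, so $x^k+\alpha_*d^k\in\scL_f(x^0)\subseteq\Omega$ automatically; in the second case $x^k+\theta^jd^k\in\scL_f(x^0)$ directly. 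Convexity of $\Omega$ then makes \eqref{apx-nxt-grad} and \eqref{desc-ineq} applicable with no size restriction on $d^k$ at all. Second, you write $(\gamma_\nu(\epsilon_g)\epsilon_g)^{1/2}=2H_\nu^{1/(1+\nu)}\epsilon_g^{1/(1+\nu)}$; the correct exponent is $\nu/(1+\nu)$, not $1/(1+\nu)$ (you later use the correct form $(H_\nu\epsilon_g^\nu)^{1/(1+\nu)}$, so this is just a slip, but worth flagging since it is the quantity that sets the scale of the entire analysis).
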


\begin{remark}
\vspace{10mm}
\begin{enumerate}[{\rm (i)}]
\item The iteration complexity presented in Theorem~\ref{thm:c-pd}(i) matches the lower iteration complexity bound stated in \eqref{opt-2nd-ic} (see also \cite{cartis2011optimal,CaGoTo18}) for finding an $\epsilon_g$-FOSP of \eqref{ucpb} using a second-order method. Moreover, the operation complexity stated in Theorem~\ref{thm:c-pd}(ii) is a novel contribution to the literature. While some operation complexity results have been established in \cite[Corollaries 4 and 5]{zhang2023riemannian} for adaptive regularized Newton methods, those results only guarantee $x$ satisfying $\|\nabla f(x)\|\le\epsilon_g$ {\it with high probability}. In contrast,  the operation complexity in Theorem~\ref{thm:c-pd}(ii) is achieved by Algorithm~\ref{alg:NCG-pd} for deterministically finding an $\epsilon_g$-FOSP.

\item When $\nu=1$, the iteration and operation complexity of Algorithm~\ref{alg:NCG-pd} for finding an $\epsilon_g$-FOSP of \eqref{ucpb} are given by
\[
\cO\big(L_H^{1/2}\epsilon_g^{-3/2}\big)\quad \text{and}\quad \widetilde{\cO}\big(L_H^{1/2}\epsilon_g^{-3/2}\min\big\{n,U_H^{1/2}/(L_H\epsilon_g)^{1/4}\big\}\big),
\]
respectively, where $L_H$ is the Lipschitz constant of $\nabla^2 f$. These results demonstrate improved dependence on $L_H$ compared to the iteration and operation complexity achieved by the Newton-CG methods in \cite{hlp2023ncgal,RNW18} for finding an $\epsilon_g$-FOSP of \eqref{ucpb}, which are $\cO\big(L_H^{2}\epsilon_g^{-3/2}\big)$ and $\widetilde{\cO}\big(L_H^{2}\epsilon_g^{-3/2}\min\big\{n,U_H^{1/2}/\epsilon_g^{1/4}\big\}\big)$, respectively.
\end{enumerate}
\end{remark}

The next theorem establishes iteration and operation complexity of Algorithm~\ref{alg:NCG-pd} for finding a stochastic $(\epsilon_g,\epsilon_H)$-SOSP. Its proof is deferred to Section~\ref{subsec:proof1}.

\begin{theorem}\label{thm:c-pd-sosp}
Suppose that Assumption~\ref{asp:NCG-cmplxity} holds with some $H_\nu>0$ and $\nu\in(0,1]$, and $\epsilon_H\in(0,1)$ is provided for Algorithm~\ref{alg:NCG-pd}. 
Let $\epsilon_g \in (0,1)$ be given, $\fl$ and $U_H$ be given in \eqref{lwbd-Hgupbd}, $K_1$ be defined in \eqref{K1}, $\eta$ and $\theta$ be given in Algorithm~\ref{alg:NCG-pd},  and
\begin{align}
&c_{\mathrm{meo}}:=(\eta/2)\min\big\{1,\theta((1-\eta)/H_\nu)^{1/\nu}\big\}^2(1/2)^{(2+\nu)/\nu}, \label{cmeo} \\
& K_2 := \left\lceil \frac{f(x^0)-\fl}{c_{\mathrm{meo}}} \epsilon_H^{-(2+\nu)/\nu}\right\rceil + 1. \label{K2}
\end{align}
Then the following statements hold.
\begin{enumerate}[{\rm (i)}]
\item {\bf (iteration complexity)} Algorithm~\ref{alg:NCG-pd} terminates in at most $K_1 + 2K_2-1$ iterations with
\begin{align}\label{K1K2-order}
K_1 + 2K_2 -1 = \cO\big(H_\nu^{1/(1+\nu)}\epsilon_g^{-(2+\nu)/(1+\nu)}+H_\nu^{2/\nu}\epsilon_H^{-(2+\nu)/\nu}\big).
\end{align}
Moreover, its output $x^k$ satisfies $\|\nabla f(x^k)\|\le \epsilon_g$ deterministically and $\lambda_{\min}(\nabla^2 f(x^k)) \ge -\epsilon_H$ with probability at least $1-\delta$ for some $0\le k\le K_1 + 2 K_2 - 1$.
\item {\bf (operation complexity)} Algorithm~\ref{alg:NCG-pd} requires at most 
\begin{align*}
&\widetilde{\cO}\Big(\big(H_\nu^{1/(1+\nu)}\epsilon_g^{-(2+\nu)/(1+\nu)}+H_\nu^{2/\nu}\epsilon_H^{-(2+\nu)/\nu}\big)\min\big\{n,U_H^{1/2}/(H_\nu\epsilon_g^\nu)^{1/(2+2\nu)}\big\} \\
&\quad\,+ H_\nu^{2/\nu}\epsilon_H^{-(2+\nu)/\nu}\min\big\{n,(U_H/\epsilon_H)^{1/2}\big\}\Big)
\end{align*}
gradient evaluations and Hessian-vector products of $f$.
\end{enumerate}
\end{theorem}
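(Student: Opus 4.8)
The plan is to classify every iteration of Algorithm~\ref{alg:NCG-pd} by the branch it executes, bound the number of iterations of each type, and then multiply by the per-iteration work. When $\|\nabla f(x^k)\|>\epsilon_g$ the iteration calls the capped CG method and returns either an NC direction, or a SOL direction that is accepted either with a backtracking step $\alpha_k=\theta^{j_k}$ obeying \eqref{ls-sol-stepsize} (a \emph{backtracking-SOL} iteration) or with the unit step $\alpha_k=1$ (a \emph{unit-SOL} iteration); when $\|\nabla f(x^k)\|\le\epsilon_g$ the iteration calls the minimum eigenvalue oracle and either \emph{terminates} with a certificate or is an \emph{MEO} iteration with step obeying \eqref{ls-meo-pd}. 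Since every step gives $f(x^{k+1})\le f(x^k)$, all iterates lie in $\scL_f(x^0)\subseteq\Omega$, so \eqref{apx-nxt-grad}, \eqref{desc-ineq} and $\|\nabla^2 f(x^k)\|\le U_H$ may be used, exactly as in the proof of Theorem~\ref{thm:c-pd}. For NC and backtracking-SOL iterations the decrease $f(x^k)-f(x^{k+1})\ge\min\{c_{\mathrm{sol}},c_{\mathrm{nc}}\}\gamma_\nu(\epsilon_g)^{-1/2}\epsilon_g^{3/2}$ is precisely the estimate already established for Theorem~\ref{thm:c-pd}, so over the whole run there are at most $K_1-1$ of them. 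What is new is (a) a matching decrease estimate for MEO iterations, (b) a bound on the number of unit-SOL iterations, and (c) the operation count.

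For an MEO iteration, Algorithm~\ref{pro:meo} returns $v$ with $\|v\|=1$ and $v^T\nabla^2 f(x^k)v\le-\epsilon_H/2$, so \eqref{dk-nc-meo2} gives $\|d^k\|=|v^T\nabla^2 f(x^k)v|\in[\epsilon_H/2,U_H]$, $\nabla f(x^k)^Td^k\le0$, and $(d^k)^T\nabla^2 f(x^k)d^k=-\|d^k\|^3$. Plugging $y=x^k+\theta^jd^k$ into \eqref{desc-ineq} and discarding the nonpositive gradient term gives
\[
f(x^k+\theta^jd^k)\le f(x^k)-\frac{1}{2}\theta^{2j}\|d^k\|^3+\frac{H_\nu}{(1+\nu)(2+\nu)}\theta^{(2+\nu)j}\|d^k\|^{2+\nu},
\]
whose right-hand side is $\le f(x^k)-\eta\theta^{2j}\|d^k\|^3/2$ as soon as $\theta^{\nu j}\le(1-\eta)\|d^k\|^{1-\nu}/H_\nu$; since $\nu\le1$ and $\|d^k\|\ge\epsilon_H/2$, this holds once $\theta^j\le((1-\eta)/H_\nu)^{1/\nu}(\epsilon_H/2)^{(1-\nu)/\nu}$. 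Hence the loop \eqref{ls-meo-pd} stops with $\alpha_k\ge\min\{1,\theta((1-\eta)/H_\nu)^{1/\nu}(\epsilon_H/2)^{(1-\nu)/\nu}\}$, and substituting this together with $\|d^k\|\ge\epsilon_H/2$ back into \eqref{ls-meo-pd} and simplifying (using $\epsilon_H<1$, $\nu\le1$, and $2(1-\nu)/\nu+3=(2+\nu)/\nu$) yields $f(x^k)-f(x^{k+1})\ge c_{\mathrm{meo}}\epsilon_H^{(2+\nu)/\nu}$ with $c_{\mathrm{meo}}$ as in \eqref{cmeo}. This is the one place the hypothesis $\nu>0$ is essential: at $\nu=0$ the remainder in \eqref{desc-ineq} has the same order in $\theta^j$ as the curvature term, so no useful step-size lower bound exists. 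It follows that there are at most $K_2-1$ MEO iterations and at most one terminating iteration.

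It remains to bound the number $N$ of unit-SOL iterations, the only genuinely delicate point, since such an iteration guarantees only $f(x^{k+1})\le f(x^k)$ and no uniform decrease. However, in that branch $\|\nabla f(x^{k+1})\|\le\epsilon_g$ by construction, so iteration $k+1$ exists and is either an MEO iteration or the terminating iteration; as $k\mapsto k+1$ is injective, $N$ is at most the total number of MEO and terminating iterations, hence $N\le K_2$. Adding the four counts, the algorithm performs at most $(K_1-1)+K_2+(K_2-1)+1=K_1+2K_2-1$ iterations, which together with $K_1=\cO(H_\nu^{1/(1+\nu)}\epsilon_g^{-(2+\nu)/(1+\nu)})$ (Theorem~\ref{thm:c-pd}) and $K_2=\cO(H_\nu^{2/\nu}\epsilon_H^{-(2+\nu)/\nu})$ gives \eqref{K1K2-order}. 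The returned $x^k$ is the iterate at the terminating iteration, so $\|\nabla f(x^k)\|\le\epsilon_g$ holds deterministically, while $\lambda_{\min}(\nabla^2 f(x^k))\ge-\epsilon_H$ is the conclusion of the single certifying call to Algorithm~\ref{pro:meo}, hence holds with probability at least $1-\delta$.

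For the operation complexity I attach to each of these counts the cost of one subroutine call. With $H=\nabla^2 f(x^k)$, $\|H\|\le U_H$, and $\varepsilon=(\gamma_\nu(\epsilon_g)\epsilon_g)^{1/2}=2H_\nu^{1/(1+\nu)}\epsilon_g^{\nu/(1+\nu)}$, one call of Algorithm~\ref{alg:capped-CG} costs $\widetilde{\cO}(\min\{n,\sqrt{U_H/\varepsilon}\})=\widetilde{\cO}(\min\{n,U_H^{1/2}/(H_\nu\epsilon_g^\nu)^{1/(2+2\nu)}\})$ Hessian-vector products, plus $\widetilde{\cO}(1)$ gradient and function evaluations in the ensuing line search (whose backtracking index is logarithmically bounded by the step-size lower bounds above); such calls occur on the SOL and NC iterations, $\cO(K_1+K_2)$ in number. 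With $\varepsilon=\epsilon_H$, one call of Algorithm~\ref{pro:meo} costs $\widetilde{\cO}(\min\{n,\sqrt{U_H/\epsilon_H}\})$ Hessian-vector products plus $\widetilde{\cO}(1)$ more for the MEO line search; such calls occur on the MEO and terminating iterations, $\cO(K_2)$ in number. Summing and inserting the orders of $K_1$ and $K_2$ yields the stated bound. I expect the two real obstacles to be the explicit MEO step-size and decrease estimate matching the constant $c_{\mathrm{meo}}$, and the accounting for the unit-SOL iterations (which is the source of the factor $2$ in front of $K_2$); the rest follows from the analysis behind Theorem~\ref{thm:c-pd} and the stated guarantees of Algorithms~\ref{alg:capped-CG} and~\ref{pro:meo}.
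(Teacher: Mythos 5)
Your iteration-counting argument is correct and carries the same content as the paper's proof, just with a different labeling: the paper bounds the number of capped-CG calls that yield $\|\nabla f(x^{k+1})\|\le\epsilon_g$ by the number of MEO calls, while you bound your ``unit-SOL'' class the same way; both give $K_1+2K_2-1$, and the operation count is the same tally.

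The genuine gap is in the MEO step-size lower bound. You substitute the trial point $y=x^k+\theta^j d^k$ into \eqref{desc-ineq}, but \eqref{desc-ineq} is only valid for $x,y\in\Omega$, and a trial point explored during backtracking need not lie in $\Omega$: your preamble establishes only that the accepted \emph{iterates} lie in $\scL_f(x^0)\subseteq\Omega$ because they satisfy $f(x^{k+1})\le f(x^k)$, whereas a trial point with $f(x^k+\theta^j d^k)>f(x^k)$ may escape $\scL_f(x^0)$. Consequently your claim that ``\eqref{ls-meo-pd} holds once $\theta^j\le((1-\eta)/H_\nu)^{1/\nu}(\epsilon_H/2)^{(1-\nu)/\nu}$'' does not follow as written. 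The paper's Lemma~\ref{lem:meo-dec} closes this with a two-case argument: if $f(x^k+\theta^j d^k)\le f(x^k)$, the trial point is in the level set and \eqref{desc-ineq} applies exactly as you use it; if instead $f(x^k+\theta^j d^k)>f(x^k)$, one sets $\varphi(\alpha)=f(x^k+\alpha d^k)$, notes $\varphi'(0)\le0$, $\varphi''(0)<0$, $\varphi(\theta^j)>\varphi(0)$ to locate a local minimizer $\alpha_*\in(0,\theta^j)$ with $\varphi(\alpha_*)<\varphi(0)$ (so $x^k+\alpha_*d^k\in\scL_f(x^0)$) and $\varphi''(\alpha_*)\ge0$, and then \eqref{F-Hess-Lip} applied at $x^k$ and $x^k+\alpha_*d^k$ yields $\alpha_*^\nu\ge\|d^k\|^{1-\nu}/H_\nu$, whence $\theta^j>\alpha_*$ is again bounded below. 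Both cases are needed to show that every $j$ violating \eqref{ls-meo-pd} has $\theta^j$ bounded below, which is what delivers the stated $\alpha_k$ lower bound and hence $c_{\mathrm{meo}}$. (The same two-case structure underlies the paper's Lemmas~\ref{lem:sol-1} and \ref{lem:nc-1}, which you cite rather than re-derive, so this hole appears only in the MEO case, where you give a fresh derivation.)
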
  

\begin{remark}
\begin{enumerate}[{\rm (i)}]
\item The operation complexity stated in Theorem~\ref{thm:c-pd-sosp}(ii) is a novel contribution to the literature. While similar operation complexity results have been established in \cite{zhang2023riemannian} for adaptive regularized Newton methods, those results only guarantee finding a point $x$ satisfying $\|\nabla f(x)\|\le\epsilon_g$ and $\lambda_{\min}(\nabla^2 f(x))\ge-\epsilon_H$, both {\it with high probability}. In contrast,  the operation complexity in Theorem~\ref{thm:c-pd-sosp}(ii) is achieved by Algorithm~\ref{alg:NCG-pd} for finding a point $x$ satisfying $\|\nabla f(x)\|\le\epsilon_g$ {\it deterministically} and $\lambda_{\min}(\nabla^2 f(x))\ge-\epsilon_H$ with high probability.

\item When $\nu=1$, the iteration and operation complexity results of Algorithm~\ref{alg:NCG} for finding a stochastic $(\epsilon_g,\epsilon_H)$-SOSP of \eqref{ucpb} are given by 
$\cO\big(L_H^{1/2}\epsilon_g^{-3/2}+L_H^2\epsilon_H^{-3}\big)$ and
\begin{align*}
\widetilde{\cO}\Big(\big(L_H^{1/2}\epsilon_g^{-3/2}+L_H^2\epsilon_H^{-3}\big)\min\big\{n,U_H^{1/2}/(L_H\epsilon_g)^{1/4}\big\} + L_H^2\epsilon_H^{-3}\min\big\{n,(U_H/\epsilon_H)^{1/2}\big\}\Big),    
\end{align*}
respectively, where $L_H$ is the Lipschitz constant  $\nabla^2 f$. When $\epsilon_H \ge (L_H\epsilon_g)^{1/2}$, these iteration and operation complexity results reduce to $\cO\big(L_H^{1/2}\epsilon_g^{-3/2}\big)$ and $ \widetilde{\cO}\big(L_H^{1/2}\epsilon_g^{-3/2}\min\big\{n,U_H^{1/2}/(L_H\epsilon_g)^{1/4}\big\}\big)$, respectively. These bounds exhibit improved dependence on $L_H$ compared to those achieved by the Newton-CG methods in \cite{hlp2023ncgal,RNW18} for finding a stochastic $(\epsilon_g,\epsilon_H)$-SOSP of \eqref{ucpb}, which are $\cO\big(L_H^{2}\epsilon_g^{-3/2}\big)$ and  $\widetilde{\cO}\big(L_H^{2}\epsilon_g^{-3/2}\min\big\{n,U_H^{1/2}/(L_H\epsilon_g)^{1/4}\big\}\big)$, respectively. 


\end{enumerate}
\end{remark}

\section{A parameter-free Newton-CG method for problem~\eqref{ucpb}}\label{sec:ncg}

In Section \ref{sec:pd-ncg}, we proposed a Newton-CG method (Algorithm~\ref{alg:NCG-pd}) for solving problem \eqref{ucpb}, assuming that the parameters $\nu$ and $H_\nu$ associated with the H\"older continuity of $\nabla^2 f$ are explicitly known. This method achieves the best-known iteration complexity for finding an $\epsilon_g$-FOSP deterministically and an $(\epsilon_g,\epsilon_H)$-SOSP with high probability, and its fundamental operations rely only on gradient evaluations and Hessian-vector products of $f$. However, this method requires explicit knowledge of
$\nu$ and $H_\nu$ to compute the quantity $\gamma_\nu(\epsilon_g)$, making it inapplicable to problem \eqref{ucpb} when these parameters are unknown. In addition, even when $\nu$ and $H_\nu$ are known, they may be overly conservative since they must satisfy \eqref{F-Hess-Lip} globally. This conservativeness can result in an excessively large $\gamma_\nu(\epsilon_g)$, potentially leading to slower practical convergence for Algorithm~\ref{alg:NCG-pd}. To address these challenges, we propose a parameter-free Newton-CG method (Algorithm~\ref{alg:NCG}), which incorporates an innovative backtracking scheme for locally estimating $\gamma_\nu(\epsilon_g)$. This method achieves a similar order of iteration and operation complexity as Algorithm~\ref{alg:NCG-pd}, but without requiring any prior knowledge of $\nu$ and $H_\nu$.

We now briefly describe the parameter-free Newton-CG method (Algorithm~\ref{alg:NCG}) for solving \eqref{ucpb}. At each outer iteration $k$, we perform the following operations.
\begin{itemize}
\item[(i)] If $x^k$ satisfies $\|\nabla f(x^k)\|>\epsilon_g$, we invoke the capped CG method (Algorithm~\ref{alg:capped-CG}) to solve a damped Newton system
\begin{align}\label{damp-n-trial}
\big(\nabla^2 f(x^k) + 2(\sigma_t\epsilon_g)^{1/2}I\big)d = -\nabla f(x^k),    
\end{align}
where $\sigma_t$ is a trial value replacing $\gamma_\nu(\epsilon_g)$ in \eqref{damp-n}. We then evaluate whether the current trial $\sigma_t$ appropriately estimates $\gamma_\nu(\epsilon_g)$ by performing several checks on the output  $d^t_k$ of Algorithm~\ref{alg:capped-CG} as follows.
\begin{itemize}
\item 
If both $f(x^k + d^t_k)\le f(x^k)$ and $\|\nabla f(x^k + d^t_k)\|\le\epsilon_g$ hold, the trial $\sigma_t$ is deemed an appropriate estimate of $\gamma_\nu(\epsilon_g)$, and $d_k^t$ is accepted as a suitable descent direction for generating the next iterate $x^{k+1}$.
\item 
If $6\|d_k^t\|< (\epsilon_g/\sigma_t)^{1/2}$, the trial $\sigma_t$  is considered an inappropriate estimate of  $\gamma_\nu(\epsilon_g)$. In this case,  $\sigma_t$  is increased by a ratio $r$, and the process is repeated with the updated $\sigma_t$.
\item 
If $6\|d_k^t\|\geq (\epsilon_g/\sigma_t)^{1/2}$, a line search 
is performed to determine whether a suitable step size exists for  $d_k^t$ to achieve sufficient reduction in $f$. If a suitable step size is found, the next iterate  $x^{k+1}$ is generated using this step size and the direction $d^t_k$. If no such step size exists,  $\sigma_t$  is increased by a ratio $r$ and the process is repeated with the updated $\sigma_t$.
\end{itemize}
\item[(ii)]
If $x^k$ satisfies $\|\nabla f(x^k)\|\le\epsilon_g$, similar to Algorithm~\ref{alg:NCG-pd}, this algorithm offers two options.
\begin{itemize}
\item When the tolerance $\epsilon_H\in(0,1)$ for second-order stationarity is not provided, Algorithm~\ref{alg:NCG} terminates with $x^k$ as an $\epsilon_g$-FOSP. 
\item When $\epsilon_H\in(0,1)$ is provided, a minimum eigenvalue oracle (Algorithm~\ref{pro:meo}) is invoked to either obtain a sufficiently negative curvature direction and generate the next iterate $x^{k+1}$ via a line search, or certify that $x^k$ is an $(\epsilon_g,\epsilon_H)$-SOSP with high probability and terminate the algorithm.
\end{itemize}
\end{itemize}

\begin{algorithm}
{\footnotesize
\caption{A parameter-free Newton-CG method for problem~\eqref{ucpb}}
\label{alg:NCG}
\begin{algorithmic}
\State {\bf input}: tolerance $\epsilon_g\in(0,1)$, starting point $x^0$, CG-accuracy parameter $\zeta\in(0,1)$, trial regularization parameter $\gamma_{-1}>0$, backtracking ratios $r>1, \theta\in(0,1)$, line-search parameter $\eta\in(0,1)$, probability parameter $\delta\in(0,1)$; {\bf optional input:} tolerance $\epsilon_H\in(0,1)$.
\For{$k=0,1,2,\ldots$}
\\\hrulefill 
\State \Comment{{\it This part aims to improve first-order stationarity by calling Algorithm~\ref{alg:capped-CG}.}}
\If{$\|\nabla f(x^k)\|>\epsilon_g$}
\State Set $H_k = \nabla^2 f(x^k)$, $g^k = \nabla f(x^k)$, and $\sigma_0 = \max\{\gamma_{-1},\gamma_{k-1}/r\}$; 
\For{$t=0,1,2,\ldots$}
\State Set $\sigma_t=r^t\sigma_0$;
\State Call Algorithm~\ref{alg:capped-CG} (Appendix~\ref{appendix:capped-CG}) with $H=H_k$, $\varepsilon=(\sigma_t\epsilon_g)^{1/2}$, $g=g^k$, accuracy parameter $\zeta$, and $U=0$ to obtain
\State outputs $d$, d$\_$type;
\If{d$\_$type=NC}
\State Set
\begin{equation*}
d_k^t= -\sgn(d^Tg^k)\max\{1,1/\sigma_t\}\frac{|d^TH_k d|}{\|d\|^3}d;
\end{equation*}
\Else\ \{d$\_$type=SOL\}
\State Set
\begin{equation*}
d_k^t= d;
\end{equation*}
\EndIf
\If{d\_type=SOL}
\State {\bf if} {$f(x^k + d^t_k)\le f(x^k)$ and $\|\nabla f(x^k + d^t_k)\|\le\epsilon_g$} {\bf then} set $j_t=0$ and {\bf break} the inner loop;
\State {\bf else if} $6\|d^t_k\|\ge (\epsilon_g/\sigma_t)^{1/2}$ {\bf then}
\State \  \quad Check whether there exists any nonnegative integer $j$ satisfying
\begin{align}
&\theta^j\ge \min\{1,2(1-\eta)\theta(\epsilon_g/\sigma_t)^{1/4}/(3\|d^t_k\|^{1/2})\},\label{sol-lwbd-thetaj}\\
&f(x^k + \theta^{j} d^t_k) \le f(x^k) - \eta(\sigma_t\epsilon_g)^{1/2}\theta^{2j}\|d_k^t\|^2;\label{ls-sol-stepsize-2}
\end{align}
\State\  \quad If such $j$ exists, set $j_t$ as the smallest nonnegative integer such that \eqref{ls-sol-stepsize-2} holds and {\bf break} the inner loop;
\State {\bf end if}
\ElsIf{d\_type=NC}
\State Check whether there exists any nonnegative integer $j$ satisfying
\begin{align}
&\theta^{j-1}\ge \min\{1,1/\sigma_t\},\label{nc-lwbd-thetaj}\\
&f(x^k + \theta^j d^t_k) \le f(x^k) - \eta\min\{1,\sigma_t\}\theta^{2j}\|d^t_k\|^3/4;\label{ls-nc-stepsize-2}
\end{align}
\State If such $j$ exists, set $j_t$ as the smallest nonnegative integer such that \eqref{ls-nc-stepsize-2} holds and {\bf break} the inner loop;
\EndIf
\EndFor
\State Set $(\alpha_k,\gamma_k,d^k)=(\theta^{j_t},\sigma_t,d^t_k)$;
\ElsIf{$\|\nabla f(x^k)\|\le\epsilon_g$ and $\epsilon_H$ is not provided}
\State Output $x^k$ and terminate;

\\\hrulefill 
\State \Comment{{\it This part aims to improve second-order stationarity by calling Algorithm~\ref{pro:meo}.}}
\ElsIf{$\|\nabla f(x^k)\|\le\epsilon_g$ and $\epsilon_H$ is provided}
\State Call Algorithm~\ref{pro:meo} (Appendix~\ref{appendix:meo}) with $H=\nabla^2 f(x^k)$, $\varepsilon=\epsilon_H$, and probability parameter $\delta$;
\If{Algorithm~\ref{pro:meo} certifies that $\lambda_{\min}(\nabla^2 f(x^k))\ge-\epsilon_H$}
\State Output $x^k$ and terminate;
\Else\ \{Sufficiently negative curvature direction $v$ returned by Algorithm~\ref{pro:meo}\}
\State Set $\gamma_k=\gamma_{k-1}$ and
\begin{equation*}
d^k \leftarrow -\mathrm{sgn}(v^T\nabla f(x^k))|v^T\nabla^2 f(x^k) v| v; 
\end{equation*}
\EndIf
\State Find $\alpha_k=\theta^{j_k}$, where $j_k$ is the smallest nonnegative integer $j$ such that
\begin{align*}
f(x^k+\theta^j d^k) \le f(x^k) - \eta \theta^{2j}\|d^k\|^3/2;    
\end{align*}
\EndIf
\\\hrulefill 
\State \Comment{{\it This part updates the next iterate.}}
\State Set $x^{k+1}=x^k+\alpha_k d^k$; 
\EndFor
\end{algorithmic}
}
\end{algorithm}

In what follows, we present the complexity results for Algorithm~\ref{alg:NCG}. For ease of reference, we define an {\it outer iteration} of Algorithm~\ref{alg:NCG} as one iteration that updates $x^k$ to $x^{k+1}$, and an {\it inner iteration} as one call to either Algorithm~\ref{alg:capped-CG} or Algorithm~\ref{pro:meo}. The following theorem establishes that the number of calls to Algorithm~\ref{alg:capped-CG} at each outer iteration of Algorithm~\ref{alg:NCG} is finite, ensuring that Algorithm~\ref{alg:NCG} is well-defined. The proof of this result is provided in Section~\ref{subsec:proof2}.


\begin{theorem}[{{\bf well-definedness of Algorithm~\ref{alg:NCG}}}]\label{thm:wd-ncg}
Suppose that Assumption~\ref{asp:NCG-cmplxity} holds. Let $\{\gamma_k\}$ be generated by Algorithm~\ref{alg:NCG}, $\gamma_\nu(\epsilon_g)$ be defined in \eqref{gma-eps}, and 
\begin{equation}\label{inner-bd}
\sigma(\epsilon_g):= \max\{\gamma_{-1},r\gamma_\nu(\epsilon_g)\},\qquad T :=\big\lceil\log(\sigma(\epsilon_g)/\gamma_{-1})/\log r\big\rceil_++2,
\end{equation}
where $\gamma_{-1}$ and $r$ are the inputs of Algorithm~\ref{alg:NCG}. Then, the number of calls of Algorithm~\ref{alg:capped-CG} at the $k$th iteration of Algorithm~\ref{alg:NCG} is at most $T$, and $\gamma_k \le \sigma(\epsilon_g)$ holds for all $k\ge0$. 
Moreover, the total number of calls of Algorithms~\ref{alg:capped-CG} and \ref{pro:meo} during the first $s$ outer iterations of Algorithm~\ref{alg:NCG} is at most $T+2s$.
\end{theorem}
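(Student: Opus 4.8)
The plan is to bound the number of inner iterations (calls to Algorithm~\ref{alg:capped-CG}) in a single outer iteration $k$ with $\|\nabla f(x^k)\|>\epsilon_g$, by showing that once the trial value $\sigma_t$ grows past a certain threshold comparable to $\gamma_\nu(\epsilon_g)$, the inner loop is forced to \textbf{break}. The key structural fact is that $\sigma_t = r^t\sigma_0$ with $\sigma_0 = \max\{\gamma_{-1},\gamma_{k-1}/r\}$, so the trials form a geometric sequence; hence it suffices to argue that the loop terminates as soon as $\sigma_t \ge \gamma_\nu(\epsilon_g)$ (the "correct" inexact Lipschitz constant). First I would invoke Assumption~\ref{asp:NCG-cmplxity}, in particular the descent inequality \eqref{desc-ineq} and the gradient-approximation inequality \eqref{apx-nxt-grad}, applied at $x^k$, together with the output guarantees of the capped CG method (either a SOL direction $d^t_k$ with $\|(H_k+2(\sigma_t\epsilon_g)^{1/2}I)d^t_k + g^k\|\le\hat\zeta\|g^k\|$ and $d^t_k{}^TH_k d^t_k\ge -(\sigma_t\epsilon_g)^{1/2}\|d^t_k\|^2$, or an NC direction). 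The aim is to show: when $\sigma_t \ge \gamma_\nu(\epsilon_g)$, then $2(\sigma_t\epsilon_g)^{1/2}I$ dominates the error term $H_\nu\|d^t_k\|^{\nu}$ arising from Hölder continuity along the segment $[x^k, x^k+d^t_k]$, so that the acceptance test (either the immediate-acceptance clause $f(x^k+d^t_k)\le f(x^k)$ with $\|\nabla f(x^k+d^t_k)\|\le\epsilon_g$, or the existence of a valid line-search index $j$ satisfying \eqref{sol-lwbd-thetaj}--\eqref{ls-sol-stepsize-2} in the SOL case, or \eqref{nc-lwbd-thetaj}--\eqref{ls-nc-stepsize-2} in the NC case) must succeed. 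This is essentially the same sequence of estimates used to prove the per-iteration decrease in Theorem~\ref{thm:c-pd}; the difference is that here the ``good'' constant $\gamma_\nu(\epsilon_g)$ is only used in the analysis, not in the algorithm.

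Concretely, I would split into the two cases according to d\_type. In the SOL case, I need to rule out the ``increase $\sigma_t$'' branch, i.e., show that if $6\|d^t_k\| < (\epsilon_g/\sigma_t)^{1/2}$ then the immediate-acceptance clause already fired (so we never reach the increase step), and if $6\|d^t_k\| \ge (\epsilon_g/\sigma_t)^{1/2}$ then a line-search index $j$ meeting both \eqref{sol-lwbd-thetaj} and \eqref{ls-sol-stepsize-2} exists; both follow from \eqref{desc-ineq} once $\sigma_t\epsilon_g \ge (\gamma_\nu(\epsilon_g)\epsilon_g)$, by the standard computation that the damped Newton step gives quadratic-model decrease and the Hölder remainder is absorbed. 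In the NC case, the NC direction $d^t_k$ has $\|d^t_k\| = \max\{1,1/\sigma_t\}|d^TH_kd|/\|d\|^2$ and negative curvature $\le -(\sigma_t\epsilon_g)^{1/2}$; again \eqref{desc-ineq} gives that the line-search condition \eqref{ls-nc-stepsize-2} with a $j$ obeying \eqref{nc-lwbd-thetaj} is satisfiable once $\sigma_t$ is large enough, with the threshold again of order $\gamma_\nu(\epsilon_g)$ (up to the fixed constants $\eta,\theta$). Collecting, the inner loop breaks once $\sigma_t \ge \gamma_\nu(\epsilon_g)$; since $\sigma_t = r^t\sigma_0 \ge r^t\gamma_{-1}$, this happens for some $t \le \lceil\log(\gamma_\nu(\epsilon_g)/\gamma_{-1})/\log r\rceil_+ + 1 \le T-1$, giving at most $T$ calls, and moreover the accepted value satisfies $\gamma_k = \sigma_t \le r\gamma_\nu(\epsilon_g) \le \sigma(\epsilon_g)$ (the extra factor $r$ from one possible overshoot) as well as $\gamma_k\le\sigma(\epsilon_g)$ by induction using $\sigma_0 = \max\{\gamma_{-1},\gamma_{k-1}/r\}\le\sigma(\epsilon_g)$.

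For the last sentence: in an outer iteration that improves first-order stationarity, Algorithm~\ref{alg:capped-CG} is called at most $T$ times and Algorithm~\ref{pro:meo} zero times; in an outer iteration that improves second-order stationarity (the $\|\nabla f(x^k)\|\le\epsilon_g$, $\epsilon_H$-provided branch), Algorithm~\ref{pro:meo} is called exactly once and Algorithm~\ref{alg:capped-CG} zero times. So each outer iteration contributes at most $T$ calls; a naive bound gives $Ts$ over $s$ outer iterations, which is weaker than the claimed $T+2s$. To get the sharper bound I would use an amortization/warm-start argument: because $\sigma_0 = \max\{\gamma_{-1},\gamma_{k-1}/r\}$, the number of inner iterations at outer step $k$ is at most $2 + \lceil\log(\gamma_k/\gamma_{k-1})/\log r\rceil_+$ when a first-order step is taken (the trial starts only a factor $r$ below the previous accepted value, so $\sigma_t$ climbs from $\gamma_{k-1}/r$ to $\gamma_k$), and exactly $1$ when a second-order (MEO) step is taken while $\gamma_k=\gamma_{k-1}$ is unchanged. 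Summing the telescoping $\sum_k \log(\gamma_k/\gamma_{k-1}) \le \log(\sup_k\gamma_k/\gamma_{-1}) \le \log(\sigma(\epsilon_g)/\gamma_{-1})$ over the first $s$ outer iterations and adding the per-iteration constants bounds the total number of calls by $T + 2s$, as claimed. \emph{The main obstacle} I anticipate is the threshold computation in the NC and SOL cases --- precisely verifying that the specific constants $c_{\mathrm{sol}}$, $c_{\mathrm{nc}}$ (via \eqref{csol-cnc}) and the lower-bound conditions \eqref{sol-lwbd-thetaj}, \eqref{nc-lwbd-thetaj} on $\theta^j$ are compatible, i.e., that when $\sigma_t\ge\gamma_\nu(\epsilon_g)$ there is genuinely an integer $j$ in the allowed range satisfying the Armijo-type decrease --- since this is where the Hölder exponent $\nu$ enters nontrivially and must be handled uniformly over $\nu\in[0,1]$; the telescoping/warm-start bookkeeping for $T+2s$ is routine once the per-iteration bound on inner calls in terms of $\log(\gamma_k/\gamma_{k-1})$ is in hand.
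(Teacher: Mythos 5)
Your proposal is correct and follows essentially the same route as the paper's proof: you show the inner loop must break once $\sigma_t\ge\gamma_\nu(\epsilon_g)$ (via the SOL/NC case analysis that the paper isolates in Lemmas~\ref{lem:NCG-pid-next-FOSP}, \ref{lem:well-defined-pid-sol}, and \ref{lem:well-defined-pid-nc}), use the geometric growth $\sigma_t=r^t\sigma_0\ge r^t\gamma_{-1}$ to bound the per-outer-iteration call count by $T$, prove $\gamma_k\le\sigma(\epsilon_g)$ by induction, and then obtain the sharper $T+2s$ bound by the warm-start telescoping identity $\gamma_k\ge r^{\tau_k-2}\gamma_{k-1}$. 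The only minor discrepancy is cosmetic: the paper telescopes $\tau_k\le 2+\log(\gamma_k/\gamma_{k-1})/\log r$ directly without the ceiling/positive-part, which is slightly cleaner than the form you wrote, but your subsequent summation shows you intended the same computation.
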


The next theorem states the iteration and operation complexity of Algorithm~\ref{alg:NCG} for finding an $\epsilon_g$-FOSP. Its proof is deferred to Section~\ref{subsec:proof2}.

\begin{theorem}\label{thm:c-ncg}
Suppose that Assumption~\ref{asp:NCG-cmplxity} holds with some $H_\nu>0$ and $\nu\in[0,1]$, and $\epsilon_H$ is not provided for Algorithm~\ref{alg:NCG}. Let $\epsilon_g \in (0,1)$ be given, $\fl$, $U_H$, $c_{\mathrm{nc}}$, $\sigma(\epsilon_g)$, and $T$ be  respectively given in  \eqref{lwbd-Hgupbd}, \eqref{csol-cnc}, and \eqref{inner-bd}, $\eta$ and $\theta$ be given in Algorithm~\ref{alg:NCG-pd},  and
\begin{align}
& \hat{c}_{\mathrm{sol}}:= \frac{\eta}{6} \min\bigg\{\frac{1}{6}, \left(\frac{2(1-\eta)\theta}{3}\right)^2\bigg\}, \label{hat-csol-cnc} \\
&  \overline{K}_1 =\left\lceil \frac{f(x^0)-\fl}{\min\{\hat{c}_{\mathrm{sol}},{c}_{\mathrm{nc}}\}}\sigma(\epsilon_g)^{1/2}\epsilon_g^{-3/2}\right\rceil+1. \label{hat-K1} 
\end{align}
Then the following statements hold.
\begin{enumerate}[{\rm (i)}]
\item {\bf{(iteration complexity)}} Algorithm~\ref{alg:NCG} requires at most $\overline{K}_1$ outer iterations and $T + 2\overline{K}_1$ inner iterations, where 
\begin{align}
&\overline{K}_1 = \cO\big(H_\nu^{1/(1+\nu)} \epsilon_g^{-(2+\nu)/(1+\nu)}\big),\label{order-pf-KP1}\\
&T + 2\overline{K}_1 = {\cO}\big(H_\nu^{1/(1+\nu)} \epsilon_g^{-(2+\nu)/(1+\nu)}\big).\label{order-pf-TKP1}
\end{align}
Moreover, its output $x^k$ satisfies $\|\nabla f(x^k)\|\le\epsilon_g$ for some $0\le k \le \overline{K}_1$.
\item {\bf{(operation complexity)}} Algorithm~\ref{alg:NCG} requires at most
\[
\widetilde{\cO}\Big(\min\big\{n, U_H^{1/2}/(H_\nu\epsilon_g^\nu)^{1/(2+2\nu)}\big\}H_\nu^{1/(1+\nu)}\epsilon_g^{-(2+\nu)/(1+\nu)}\Big)
\]
gradient evaluations and Hessian-vector products of $f$.
\end{enumerate}
\end{theorem}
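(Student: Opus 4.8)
\textbf{Proof proposal for Theorem~\ref{thm:c-ncg}.}

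The plan is to reduce the analysis of Algorithm~\ref{alg:NCG} to the analysis already carried out for Algorithm~\ref{alg:NCG-pd} in Theorems~\ref{thm:c-pd}, using the well-definedness result of Theorem~\ref{thm:wd-ncg} as the bridge. First I would observe that Theorem~\ref{thm:wd-ncg} already guarantees $\gamma_k \le \sigma(\epsilon_g)$ for every $k$, that each outer iteration costs at most $T$ calls of Algorithm~\ref{alg:capped-CG}, and that the cumulative number of inner calls over $s$ outer iterations is at most $T + 2s$. So it suffices to (a) produce a per-outer-iteration decrease of $f$ of order $\sigma(\epsilon_g)^{-1/2}\epsilon_g^{3/2}$ (equivalently, $\min\{\hat c_{\mathrm{sol}}, c_{\mathrm{nc}}\}\gamma_\nu(\epsilon_g)^{-1/2}\epsilon_g^{3/2}$ up to the $\sigma$ vs.\ $\gamma_\nu$ constant), and (b) translate the resulting outer-iteration bound into the stated operation complexity. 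For (a), I would split on the d\_type of the \emph{accepted} trial direction $d^k$: in the SOL case, either the full step $\alpha_k=1$ was accepted (so $f(x^k+d^k)\le f(x^k)$ and $\|\nabla f(x^k+d^k)\|\le\epsilon_g$, which will terminate the algorithm at the next outer iteration, contributing at most one extra outer iteration), or the line-search condition \eqref{ls-sol-stepsize-2} holds with a step size $\alpha_k=\theta^{j_t}$ that is bounded below via \eqref{sol-lwbd-thetaj} together with the filter $6\|d_k^t\|\ge(\epsilon_g/\sigma_t)^{1/2}$; combining these with the lower bound $\|d_k^t\|\gtrsim(\epsilon_g/\sigma_t)^{1/2}$ on the accepted direction (enforced precisely because trials failing $6\|d_k^t\|\ge(\epsilon_g/\sigma_t)^{1/2}$ are rejected) yields $f(x^k)-f(x^{k+1}) \ge \hat c_{\mathrm{sol}}\,\sigma_t^{-1/2}\epsilon_g^{3/2} \ge \hat c_{\mathrm{sol}}\,\sigma(\epsilon_g)^{-1/2}\epsilon_g^{3/2}$. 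In the NC case, the same estimates that underpin the NC analysis of Algorithm~\ref{alg:NCG-pd}, now with $\sigma_t$ in place of $\gamma_\nu(\epsilon_g)$ and with the step-size lower bound \eqref{nc-lwbd-thetaj}, give $f(x^k)-f(x^{k+1})\ge c_{\mathrm{nc}}\,\sigma_t^{1/2}\epsilon_g^{3/2}$, which is at least $c_{\mathrm{nc}}\gamma_{-1}^{1/2}\epsilon_g^{3/2}$ and, more to the point, at least $c_{\mathrm{nc}}\,\gamma_\nu(\epsilon_g)^{1/2}\epsilon_g^{3/2}$ whenever $\sigma_t\ge\gamma_\nu(\epsilon_g)$; handling the regime $\sigma_t<\gamma_\nu(\epsilon_g)$ requires the complementary crude bound, but in all cases one gets a decrease of the advertised order. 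Summing telescopically against $f(x^0)-\fl$ and using \eqref{hat-K1} gives that Algorithm~\ref{alg:NCG} halts within $\overline K_1$ outer iterations, and \eqref{order-pf-KP1} follows by substituting $\sigma(\epsilon_g)=\max\{\gamma_{-1},r\gamma_\nu(\epsilon_g)\} = \cO(\gamma_\nu(\epsilon_g)) = \cO(H_\nu^{2/(1+\nu)}\epsilon_g^{-(1-\nu)/(1+\nu)})$ into \eqref{hat-K1}, exactly as in \eqref{K1-order}. The inner-iteration bound \eqref{order-pf-TKP1} then follows from Theorem~\ref{thm:wd-ncg} (with $s=\overline K_1$) once I check $T = \cO(\log(\sigma(\epsilon_g)/\gamma_{-1})) = \widetilde{\cO}(1)$ is dominated by $2\overline K_1$; I should be slightly careful that \eqref{order-pf-TKP1} claims $\cO$ rather than $\widetilde{\cO}$, so I must confirm that $T$ is in fact $\cO(\overline K_1)$ and not merely $\widetilde{\cO}(\overline K_1)$, which holds because $\log(\sigma(\epsilon_g)/\gamma_{-1}) = \cO(\log(1/\epsilon_g)) = \cO(\epsilon_g^{-(2+\nu)/(1+\nu)})$ trivially.

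For part~(ii), the operation complexity, I would invoke the per-inner-iteration cost bound for Algorithm~\ref{alg:capped-CG}: each call performs $\widetilde{\cO}(\min\{n,\,\|H_k+2\varepsilon I\|^{1/2}\varepsilon^{-1/2}\})$ Hessian-vector products, where $\varepsilon=(\sigma_t\epsilon_g)^{1/2}$. Using $\|H_k\|\le U_H$ from \eqref{lwbd-Hgupbd} and $\varepsilon^2 = \sigma_t\epsilon_g \ge \gamma_{-1}\epsilon_g$ on one hand, but more usefully bounding the relevant ratio by $\|H_k+2\varepsilon I\|^{1/2}/\varepsilon^{1/2} = \cO(U_H^{1/2}/\varepsilon^{1/2}) = \cO\big(U_H^{1/2}/(\sigma_0\epsilon_g)^{1/4}\big)$, and then using that along an accepted or rejected trial sequence $\sigma_t$ stays within $[\gamma_{-1},\sigma(\epsilon_g)]$ so $\varepsilon = \Theta((\gamma_\nu(\epsilon_g)\epsilon_g)^{1/2})$ up to constants, giving $U_H^{1/2}/\varepsilon^{1/2} = \cO(U_H^{1/2}/(H_\nu\epsilon_g^\nu)^{1/(2+2\nu)})$ exactly as in Theorem~\ref{thm:c-pd}(ii). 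Multiplying this per-call cost by the total number of inner calls $T+2\overline K_1 = \cO(H_\nu^{1/(1+\nu)}\epsilon_g^{-(2+\nu)/(1+\nu)})$ from part~(i) and absorbing the $\widetilde{\cO}$ logarithmic factors yields the claimed bound. One subtlety: within a single outer iteration the trial values $\sigma_0,\sigma_1,\dots$ range over up to $T$ values, but since each is at least $\gamma_{-1}$ and at most $\sigma(\epsilon_g)$, the per-call bound $\widetilde{\cO}(\min\{n,U_H^{1/2}/(H_\nu\epsilon_g^\nu)^{1/(2+2\nu)}\})$ holds uniformly, so the counting in Theorem~\ref{thm:wd-ncg} suffices without a finer accounting.

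The main obstacle I anticipate is the SOL-case descent estimate under the unfamiliar acceptance rules \eqref{sol-lwbd-thetaj}--\eqref{ls-sol-stepsize-2}, where the line search is \emph{not} run to completion but only checked for existence of a $j$ satisfying the combined conditions, and where a trial with too-small $\|d_k^t\|$ is rejected and $\sigma_t$ inflated. I will need to argue that when a SOL direction is \emph{accepted} via \eqref{ls-sol-stepsize-2} one necessarily has $6\|d_k^t\|\ge(\epsilon_g/\sigma_t)^{1/2}$ (this is the branch condition), convert the step-size lower bound \eqref{sol-lwbd-thetaj} into $\theta^{2j_t} \gtrsim \min\{1,(\epsilon_g/\sigma_t)^{1/2}/\|d_k^t\|\}$, and then bound $\theta^{2j_t}\|d_k^t\|^2$ from below by $\Omega((\epsilon_g/\sigma_t)^{1/2}\|d_k^t\|)$, which combined with the branch condition gives $\Omega(\epsilon_g/\sigma_t)$, so that the right-hand side of \eqref{ls-sol-stepsize-2} contributes a decrease $\eta(\sigma_t\epsilon_g)^{1/2}\cdot\Omega(\epsilon_g/\sigma_t) = \Omega(\sigma_t^{-1/2}\epsilon_g^{3/2})$; tracking the constants carefully to land on precisely $\hat c_{\mathrm{sol}}$ in \eqref{hat-csol-cnc} is the delicate bookkeeping. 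The full-step case (setting $j_t=0$, then terminating next iteration) and the interplay between ``no valid $j$ exists $\Rightarrow$ inflate $\sigma_t$'' and the well-definedness guarantee of Theorem~\ref{thm:wd-ncg} (which ensures some trial eventually succeeds) must also be cited to close the loop, but these are direct consequences of the results already established.
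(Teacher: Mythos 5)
Your plan for part~(i) is essentially the paper's: combine the descent estimates (the SOL case being the analogue of Lemma~\ref{suf-dec-sol1}, the NC case the analogue of Lemma~\ref{suf-dec-nc1}), sum telescopically against $f(x^0)-\fl$, invoke $\gamma_k\le\sigma(\epsilon_g)$ and the $T+2s$ bound from Theorem~\ref{thm:wd-ncg}, and substitute $\sigma(\epsilon_g)=\cO(\gamma_\nu(\epsilon_g))$. A few details are loose (e.g., $\alpha_k=1$ can arise from $j_t=0$ passing \eqref{ls-sol-stepsize-2} without $\|\nabla f(x^{k+1})\|\le\epsilon_g$, which requires the $6\|d^k\|\ge(\epsilon_g/\gamma_k)^{1/2}$ branch condition to still produce a decrease of order $\epsilon_g^{3/2}/\gamma_k^{1/2}$), but this is bookkeeping you already flag, and the structure is correct.

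Part~(ii), however, has a genuine gap. You assert that ``the per-call bound $\widetilde{\cO}(\min\{n,U_H^{1/2}/(H_\nu\epsilon_g^\nu)^{1/(2+2\nu)}\})$ holds uniformly, so the counting in Theorem~\ref{thm:wd-ncg} suffices without a finer accounting,'' justified by the claim that $\varepsilon=(\sigma_t\epsilon_g)^{1/2}=\Theta((\gamma_\nu(\epsilon_g)\epsilon_g)^{1/2})$ ``up to constants.'' This is false: the trial values satisfy $\sigma_t\in[\gamma_{-1},\sigma(\epsilon_g)]$, and the ratio $\sigma(\epsilon_g)/\gamma_{-1}=\Theta(\gamma_\nu(\epsilon_g))=\Theta\big(H_\nu^{2/(1+\nu)}\epsilon_g^{-(1-\nu)/(1+\nu)}\big)$ is unbounded as $\epsilon_g\to0$ whenever $\nu<1$, so the two endpoints of the range are \emph{not} within a constant factor. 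Since the per-call cost of Algorithm~\ref{alg:capped-CG} is $\widetilde{\cO}(\min\{n,U_H^{1/2}/(\sigma_t\epsilon_g)^{1/4}\})$, which is \emph{decreasing} in $\sigma_t$, the uniform worst case occurs at $\sigma_t=\gamma_{-1}$ and is only $\widetilde{\cO}(\min\{n,U_H^{1/2}/\epsilon_g^{1/4}\})$. For $\nu<1$ this is strictly larger than the claimed $\widetilde{\cO}(\min\{n,U_H^{1/2}/(H_\nu\epsilon_g^\nu)^{1/(2+2\nu)}\})$, since $\nu/(2+2\nu)<1/4$. Multiplying this crude per-call bound by the inner-iteration count $T+2\overline{K}_1$ therefore yields a \emph{weaker} operation complexity than what the theorem states. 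The paper avoids this by a weighted-sum accounting: it bounds the per-call cost at outer iteration $k$ by $\widetilde{\cO}(\min\{n,\gamma_{k-1}^{-1/4}\cdot U_H^{1/2}\epsilon_g^{-1/4}\})$, applies Cauchy--Schwarz to $\sum_k\gamma_{k-1}^{-1/4}\tau_k$, and then controls $\sum_k\gamma_k^{-1/2}$ by the very telescoping inequality from the descent analysis (see \eqref{upbd-1/4}), which is what lets the $\gamma_k$-dependence ``average out'' to the stated bound. That refined argument, or some equivalent, is needed; a uniform per-call bound will not close the gap.
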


\begin{remark}
From Theorems~\ref{thm:c-pd} and \ref{thm:c-ncg}, we observe that Algorithm~\ref{alg:NCG} achieves the same order of iteration and operation complexity as Algorithm~\ref{alg:NCG-pd} for finding an $\epsilon_g$-FOSP of problem \eqref{ucpb}. Moreover, the iteration complexity matches the lower complexity bound stated in \eqref{opt-2nd-ic} (see also \cite{cartis2011optimal,CaGoTo18}).
\end{remark}

The following theorem presents iteration and operation complexity of Algorithm~\ref{alg:NCG} for finding a stochastic $(\epsilon_g,\epsilon_H)$-SOSP. Its proof is deferred to Section~\ref{subsec:proof2}.

\begin{theorem}\label{thm:c-ncg-sosp}
Suppose that Assumption~\ref{asp:NCG-cmplxity} holds with some $H_\nu>0$ and $\nu\in(0,1]$, and $\epsilon_H\in(0,1)$ is provided for Algorithm~\ref{alg:NCG}. Let $U_H$, $K_2$, $T$ and $\overline{K}_1$ be defined in  \eqref{lwbd-Hgupbd},  \eqref{K2},  \eqref{inner-bd} and \eqref{hat-K1}, respectively. Then the following statements hold.
\begin{enumerate}[{\rm (i)}]
\item {\bf{(iteration complexity)}} Algorithm~\ref{alg:NCG} requires at most $\overline{K}_1 + 2K_2-1$ outer iterations and $T + 2\overline{K}_1 + 4K_2- 2$ inner iterations, where 
\begin{align}
&\overline{K}_1 + 2K_2-1=\cO\big(H_\nu^{1/(1+\nu)}\epsilon_g^{-(2+\nu)/(1+\nu)}+H_\nu^{2/\nu}\epsilon_H^{-(2+\nu)/\nu}\big),\label{outer-iter-ncg-pf}\\
&T + 2\overline{K}_1 + 4K_2- 2= {\cO}\big(H_\nu^{1/(1+\nu)} \epsilon_g^{-(2+\nu)/(1+\nu)}+H_\nu^{2/\nu}\epsilon_H^{-(2+\nu)/\nu}\big).\label{inner-iter-ncg-pf}
\end{align}
Also, its output $x^k$ satisfies $\|\nabla f(x^k)\|\le\epsilon_g$ deterministically and $\lambda_{\min}(\nabla^2 f(x^k))\ge-\epsilon_H$ with probability at least $1-\delta$ for some $0\le k\le \overline{K}_1 + 2K_2 - 1$.
\item {\bf{(operation complexity)}} Algorithm~\ref{alg:NCG} requires at most
\[
\widetilde{\cO}\Big(\min\big\{n,U_H^{1/2}/\epsilon_g^{1/4}\big\} \big(H_\nu^{1/(1+\nu)} \epsilon_g^{-(2+\nu)/(1+\nu)} +H_\nu^{2/\nu}\epsilon_H^{-(2+\nu)/\nu}\big) + \min\big\{n,(U_H/\epsilon_H)^{1/2}\big\}H_\nu^{2/\nu}\epsilon_H^{-(2+\nu)/\nu}\Big)
\]
gradient evaluations and Hessian-vector products of $f$.
\end{enumerate}
\end{theorem}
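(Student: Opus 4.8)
The plan is to combine the first-order stationarity analysis for Algorithm~\ref{alg:NCG} (already carried out to prove Theorem~\ref{thm:c-ncg}) with a second-order analysis that mirrors the MEO-based argument used for Algorithm~\ref{alg:NCG-pd} in Theorem~\ref{thm:c-pd-sosp}. The key structural observation is that, just as in Algorithm~\ref{alg:NCG-pd}, the outer iterations of Algorithm~\ref{alg:NCG} split into two disjoint groups: those where $\|\nabla f(x^k)\|>\epsilon_g$ (handled by the capped CG inner loop) and those where $\|\nabla f(x^k)\|\le\epsilon_g$ and the minimum eigenvalue oracle returns a sufficiently negative curvature direction $v$ with $v^T\nabla^2 f(x^k)v\le-\epsilon_H/2$. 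For the first group, I would reuse verbatim the per-iteration function decrease $f(x^k)-f(x^{k+1})\ge\min\{\hat c_{\mathrm{sol}},c_{\mathrm{nc}}\}\sigma(\epsilon_g)^{-1/2}\epsilon_g^{3/2}$ established in the proof of Theorem~\ref{thm:c-ncg}, which bounds the number of such iterations by $\overline{K}_1$.

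For the MEO iterations, the core estimate is that when $d^k=-\mathrm{sgn}(v^T\nabla f(x^k))|v^T\nabla^2 f(x^k)v|\,v$ with $\|v\|=1$, the line search using the criterion $f(x^k+\theta^j d^k)\le f(x^k)-\eta\theta^{2j}\|d^k\|^3/2$ terminates with a step size bounded below (via the descent inequality~\eqref{desc-ineq} with exponent $\nu$ and modulus $H_\nu$), yielding a decrease of order $\min\{1,\theta((1-\eta)/H_\nu)^{1/\nu}\}^2(1/2)^{(2+\nu)/\nu}\epsilon_H^{(2+\nu)/\nu}$, i.e., $f(x^k)-f(x^{k+1})\ge c_{\mathrm{meo}}\epsilon_H^{-(2+\nu)/\nu}\cdot\epsilon_H^{(2+\nu)/\nu}\cdot(\text{constants})$ — precisely the bound already proved for Algorithm~\ref{alg:NCG-pd}, since the MEO branch of Algorithm~\ref{alg:NCG} is identical to that of Algorithm~\ref{alg:NCG-pd} except that $\gamma_k$ is carried over passively. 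Hence the number of MEO iterations that return a negative curvature direction is at most $K_2$. Summing over the telescoping decrease and using $f(x^0)-\fl$ as the total budget gives that after at most $\overline{K}_1+2K_2-1$ outer iterations (the factor $2$ and the $-1$ accounting for alternation between the two iteration types and the terminal certification step, exactly as in Theorem~\ref{thm:c-pd-sosp}(i)), the algorithm terminates; upon termination either $\|\nabla f(x^k)\|\le\epsilon_g$ with $\epsilon_H$ not triggering a restart, or the MEO certifies $\lambda_{\min}(\nabla^2 f(x^k))\ge-\epsilon_H$ with probability at least $1-\delta$. The order estimates~\eqref{outer-iter-ncg-pf} follow from $\overline{K}_1=\cO(H_\nu^{1/(1+\nu)}\epsilon_g^{-(2+\nu)/(1+\nu)})$ and $K_2=\cO(H_\nu^{2/\nu}\epsilon_H^{-(2+\nu)/\nu})$ (the latter by expanding $c_{\mathrm{meo}}$), and the inner iteration count $T+2\overline{K}_1+4K_2-2$ follows from Theorem~\ref{thm:wd-ncg}: each of the $\le\overline{K}_1+2K_2-1$ outer iterations costs at most $T$ capped-CG calls but, as in Theorem~\ref{thm:wd-ncg}, the $\gamma_k$ are monotone-controlled so the total is $T+2(\overline{K}_1+2K_2-1)$.

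For part~(ii), the operation complexity, I would invoke the known per-call iteration bounds for Algorithm~\ref{alg:capped-CG} and Algorithm~\ref{pro:meo}: each capped CG call at $x^k$ with $\varepsilon=(\sigma_t\epsilon_g)^{1/2}$ costs $\widetilde\cO(\min\{n,(U_H/\varepsilon)^{1/2}\})=\widetilde\cO(\min\{n,U_H^{1/2}/(H_\nu\epsilon_g^\nu)^{1/(2+2\nu)}\})$ Hessian-vector products after substituting $\varepsilon\asymp(\gamma_\nu(\epsilon_g)\epsilon_g)^{1/2}\asymp H_\nu^{1/(1+\nu)}\epsilon_g^{\nu/(1+\nu)}$ (this exploits $\sigma_t\le\sigma(\epsilon_g)$ from Theorem~\ref{thm:wd-ncg}), while each MEO call with $\varepsilon=\epsilon_H$ costs $\widetilde\cO(\min\{n,(U_H/\epsilon_H)^{1/2}\})$; multiplying these per-call costs by the respective numbers of calls ($T+2\overline{K}_1+4K_2-2$ capped-CG calls and $\le K_2+1$ MEO calls) and absorbing logarithmic factors gives the stated bound. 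The main obstacle — and the place requiring the most care — is bookkeeping the interaction between the adaptive $\sigma_t$ loop and the alternation of iteration types: one must verify that the $\min\{1,\dots\}$ lower bounds~\eqref{sol-lwbd-thetaj} and~\eqref{nc-lwbd-thetaj} inside the inner loop do not prevent the ``accept'' branch from being reached once $\sigma_t\ge\gamma_\nu(\epsilon_g)$, so that the decrease constants $\hat c_{\mathrm{sol}}$, $c_{\mathrm{nc}}$ genuinely apply; this is exactly the content established in the proof of Theorem~\ref{thm:wd-ncg} and Theorem~\ref{thm:c-ncg}, so the present proof reduces to assembling those pieces together with the MEO decrease estimate from Theorem~\ref{thm:c-pd-sosp}.
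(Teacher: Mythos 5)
Your architecture for part (i) matches the paper's: split outer iterations into those invoking Algorithm~\ref{alg:capped-CG} and those invoking Algorithm~\ref{pro:meo}, apply the decrease lemmas, and defer the bookkeeping to Theorem~\ref{thm:c-pd-sosp}(i). One point to make explicit: Lemma~\ref{suf-dec-sol1} guarantees a per-iteration decrease only when $\|\nabla f(x^{k+1})\|>\epsilon_g$; a CG step that lands inside the first-order tolerance carries no decrease lower bound, so the number of CG iterations cannot be bounded directly by $\overline{K}_1$. The paper's argument pairs each such CG step with the MEO call that must follow at iteration $k+1$, showing there are at most $K_2$ of them, hence at most $\overline{K}_1+K_2-1$ CG calls in total. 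Your appeal to ``exactly as in Theorem~\ref{thm:c-pd-sosp}(i)'' does cover this, but the sentence stating that the decrease bound ``bounds the number of such iterations by $\overline{K}_1$'' is not literally true as written and should be replaced by that pairing argument.

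The genuine gap is in part (ii). You bound the per-call CG cost by $\widetilde\cO\big(\min\{n,(U_H/\varepsilon)^{1/2}\}\big)$ with $\varepsilon=(\sigma_t\epsilon_g)^{1/2}$, then assert $\varepsilon\asymp(\gamma_\nu(\epsilon_g)\epsilon_g)^{1/2}$ on the strength of $\sigma_t\le\sigma(\epsilon_g)$ from Theorem~\ref{thm:wd-ncg}. That inequality points the wrong way: an upper bound on $\sigma_t$ gives an upper bound on $\varepsilon$, hence a \emph{lower} bound on $(U_H/\varepsilon)^{1/2}$, which does not bound the Lanczos iteration count from above. The only pointwise lower bound available on $\sigma_t$ is $\sigma_t\ge\sigma_0\ge\gamma_{-1}$ (a fixed input), which gives a per-call cost of $\widetilde\cO\big(\min\{n,U_H^{1/2}/(\gamma_{-1}\epsilon_g)^{1/4}\}\big)=\widetilde\cO\big(\min\{n,U_H^{1/2}/\epsilon_g^{1/4}\}\big)$ --- and this is exactly the factor appearing in the theorem's statement, not the sharper $U_H^{1/2}/(H_\nu\epsilon_g^\nu)^{1/(2+2\nu)}$ you wrote. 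The sharper factor is attainable only in the first-order setting of Theorem~\ref{thm:c-ncg}(ii), where the paper avoids a pointwise per-call bound and instead applies Cauchy--Schwarz to the weighted sum $\sum_k \tau_k\gamma_{k-1}^{-1/4}$, bounding $\sum_k\gamma_{k-1}^{-1/2}$ via the telescoping decrease inequality \eqref{upbd-1/4}. Replacing your use of $\sigma_t\le\sigma(\epsilon_g)$ with $\sigma_t\ge\gamma_{-1}$ (and correspondingly relaxing the claimed per-call bound) repairs the argument and reproduces the stated operation complexity.
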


\begin{remark}
From Theorems~\ref{thm:c-pd-sosp} and \ref{thm:c-ncg-sosp}, we observe that Algorithm~\ref{alg:NCG} achieves the same order of iteration complexity as Algorithm~\ref{alg:NCG-pd} for finding an $(\epsilon_g,\epsilon_H)$-SOSP of problem \eqref{ucpb} with high probability.     
\end{remark}

\section{Numerical results}\label{sec:num}
In this section, we conduct preliminary numerical experiments to test the performance of our parameter-free Newton-CG method (Algorithm~\ref{alg:NCG}), and compare it with  the adaptive cubic regularized Newton method (Universal Method II) in \cite{GeNe17NH}. All the algorithms are coded in Matlab,  and all the computations are performed on a laptop with a 2.20 GHz Intel Core i9-14900HX processor and 32 GB of RAM.

\subsection{Infeasibility detection problem}\label{subsec:ifeas}
In this subsection, we consider the infeasibility detection problem (see \cite{ByCuNo10}):
\begin{equation}\label{feas-dect}
\min_{x\in\bR^n} \frac{1}{m}\sum_{i=1}^m \left(x^TA_ix + b_i^Tx+ c_i\right)_+^p,
\end{equation}
where $p> 2$, $A_i\in\bR^{n\times n}$, $b_i\in\bR^n$, and $c_i\in\bR$ for $1\le i\le m$.

Our goal is to find a $10^{-4}$-FOSP of problem~\eqref{feas-dect} for the generated instances using Algorithm~\ref{alg:NCG} and the adaptive cubic regularized Newton method \cite[Universal Method II]{GeNe17NH}, and compare their performance. For the adaptive cubic regularized Newton method, we employ a gradient descent method to solve its cubic regularized subproblems, as suggested in \cite{CaDu19}. The initial point for the gradient descent method is uniformly selected from the unit sphere, and the tolerances for the subproblems decrease over iterations. For both methods, we initialize with $x^0=(0,\ldots,0)^T$, and choose the following parameter settings, which appear to suit each method well in terms of computational performance:
\begin{itemize}
\item $(\zeta,\gamma_{-1},\theta,r,\eta)=(0.5,10,0.5,2,0.01)$ for Algorithm~\ref{alg:NCG};
\item $H_0=10$ for the adaptive cubic regularized Newton method.
\end{itemize}

The computational results for Algorithm~\ref{alg:NCG} and the adaptive cubic regularized Newton method (abbreviated as A-CRN) applied to problem~\eqref{feas-dect} are presented in Table~\ref{table:feas-dect}. The first three columns of the table list the values of $n$, $m$, and $p$, respectively.  The remaining columns present the average final objective value, the average CPU time, and the average total number of subproblems over 10 random instances for each triple $(n,m,p)$. Here, a subproblem refers to either one cubic regularized subproblem solved by A-CRN or one damped Newton system solved by Algorithm~\ref{alg:NCG}. The results show that both methods produce solutions with comparable final objective values. However, Algorithm~\ref{alg:NCG} significantly outperforms A-CRN \cite{GeNe17NH} in terms of CPU time.

\begin{table}[t]
{\small
\centering
\begin{tabular}{ccc||ll||ll||ll}
\hline
& & &\multicolumn{2}{c||}{Objective value} &\multicolumn{2}{c||}{CPU time (seconds)} & \multicolumn{2}{c}{Total subproblems} \\
$n$ & $m$ & $p$ & Algorithm~\ref{alg:NCG} & A-CRN & Algorithm~\ref{alg:NCG} & A-CRN & Algorithm~\ref{alg:NCG} & A-CRN \\ \hline
100 & 10 & 2.25 & 7.1$\times10^{-15}$ & 1.7$\times10^{-14}$ & 0.01 & 0.23 & 10.3 & 28.5 \\ 
100 & 10 & 2.5  & 1.2$\times10^{-13}$ & 1.8$\times10^{-13}$ & 0.02 & 0.28 & 11.1 & 35.8 \\ 
100 & 10 & 2.75 & 7.2$\times10^{-13}$ & 3.0$\times10^{-12}$ & 0.02 & 0.27 & 13.4 & 41.7 \\ 
100 & 10 & 3    & 1.5$\times10^{-12}$ & 4.5$\times10^{-12}$ & 0.04 & 0.37 & 13.1 & 51.8 \\ 
500 & 50 & 2.25 & 1.8$\times10^{-16}$ & 2.6$\times10^{-15}$ & 3.15 & 7.51 & 11.5 & 45.6  \\ 
500 & 50 & 2.5  & 4.8$\times10^{-15}$ & 5.9$\times10^{-15}$ & 6.48 & 19.14 & 13.2 & 53.0 \\ 
500 & 50 & 2.75 & 7.0$\times10^{-14}$ & 3.7$\times10^{-14}$ & 5.29 & 16.73 & 14.2 & 58.8 \\ 
500 & 50 & 3.0  & 2.3$\times10^{-13}$ & 3.3$\times10^{-13}$ & 3.61 & 8.92 & 15.3 & 63.8 \\
1000 & 100 & 2.25 & 1.9$\times10^{-18}$ & 3.3$\times10^{-17}$ & 12.82 & 33.82 & 11.2 & 49.6 \\ 
1000 & 100 & 2.5  & 3.1$\times10^{-15}$ & 6.3$\times10^{-15}$ & 16.23 & 37.34 & 14.4 & 58.9 \\ 
1000 & 100 & 2.75 & 6.8$\times10^{-15}$ & 3.0$\times10^{-15}$ & 17.75 & 39.02 & 15.3 & 63.5 \\ 
1000 & 100 & 3    & 2.8$\times10^{-14}$ & 1.8$\times10^{-14}$ & 18.67 & 43.51 & 16.5 & 67.2 \\\hline 
\end{tabular}
\caption{Numerical results for problem~\eqref{feas-dect}}
\label{table:feas-dect}
}
\end{table}

\subsection{Single-layer neural networks problem}
In this subsection, we consider the problem of training single-layer rectified power unit (RePU)  neural networks (see \cite{LiTaYu19RePU}):
\begin{equation}\label{nn}
\min_{x\in\bR^n}{\frac{1}{m}}\sum_{i=1}^m \phi((a_i^Tx)_+^p - b_i),
\end{equation}
where $p> 2$, $\phi(t)=t^2/(1+t^2)$ is a nonconvex loss function (see \cite{BeTu74,CaDuHiSi17}), $a_i\in\bR^n$, and $b_i\in\bR$ for $1\le i\le m$.

For each triple $(n,m,p)$, we randomly generate 10 instances of problem~\eqref{nn}. In particular, we first randomly generate $a_i$, $1\le i\le m$, with all its components sampled from the standard normal distribution. We then randomly generate $\bar{b}_i$, $1\le i\le m$, according to the standard normal distribution, and set $b_i=|\bar{b}_i|$ for $1\le i\le m$.

\begin{table}[t]
{\small
\centering
\begin{tabular}{ccc||ll||ll||ll}
\hline
& & &\multicolumn{2}{c||}{Objective value} &\multicolumn{2}{c||}{CPU time (seconds)} & \multicolumn{2}{c}{Total subproblems} \\
$n$ & $m$ & $p$ & Algorithm~\ref{alg:NCG} & A-CRN & Algorithm~\ref{alg:NCG} & A-CRN & Algorithm~\ref{alg:NCG} & A-CRN \\ \hline
100 & 20 & 2.25 & 0.09 & 0.10 & 0.09 & 0.46 & 56.9 & 218.4 \\ 
100 & 20 & 2.5  & 0.09 & 0.10 & 0.10 & 0.52 & 60.1 & 242.8 \\ 
100 & 20 & 2.75 & 0.09 & 0.10 & 0.08 & 0.87 & 64.6 & 382.2 \\ 
100 & 20 & 3    & 0.10 & 0.12 & 0.12 & 0.61 & 61.5 & 325.9 \\ 
500 & 100 & 2.25 & 0.09 & 0.10 & 8.25 & 10.96 & 148.6 & 422.5 \\ 
500 & 100 & 2.5  & 0.10 & 0.11 & 9.16 & 14.85 & 153.3 & 507.7 \\ 
500 & 100 & 2.75 & 0.10 & 0.11 & 9.67 & 16.91 & 148.9 & 567.1 \\ 
500 & 100 & 3    & 0.11 & 0.12 & 11.04 & 18.83 & 164.1 & 646.7 \\ 
1000 & 200 & 2.25 & 0.10 & 0.11 & 29.53 & 34.92 & 228.7 & 631.5  \\ 
1000 & 200 & 2.5  & 0.10 & 0.11 & 34.72 & 48.42 & 241.4 & 800.2 \\ 
1000 & 200 & 2.75 & 0.11 & 0.12 & 39.81 & 70.43 & 249.1 & 856.8 \\ 
1000 & 200 & 3.0  & 0.12 & 0.13 & 46.46 & 106.92 & 279.2 & 1208.9 \\\hline 
\end{tabular}
\caption{Numerical results for problem~\eqref{nn}}
\label{table:nn}
}
\end{table}

Our goal is to find a $10^{-4}$-FOSP of problem~\eqref{nn} fof problem~\eqref{nn} for the generated instances using Algorithm~\ref{alg:NCG} and the adaptive cubic regularized Newton method \cite[Universal Method II]{GeNe17NH}, and compare their performance.  For the adaptive cubic regularized Newton method, we employ a gradient descent approach to solve its cubic regularized subproblems, as suggested in \cite{CaDu19}. The initial point for the gradient descent method is uniformly selected from the unit sphere, with tolerances for the subproblems decreasing over iterations. For both methods, we initialize $x_0=(1/n,\ldots,1/n)^T$, and adopt the same parameter settings for Algorithm~\ref{alg:NCG} and the cubic regularized Newton method as those specified in Subsection~\ref{subsec:ifeas}.

The computational results for Algorithm~\ref{alg:NCG} and the adaptive cubic regularized Newton method (abbreviated as A-CRN) for solving problem~\eqref{nn} are presented in Table~\ref{table:nn}. The first three columns of the table list the values of $n$, $m$, and $p$, respectively. The remaining columns present the average final objective, average CPU time, and average total number of subproblems over 10 random instances for each triple $(n,m,p)$. Here, a subproblem refers to either a cubic regularized subproblem solved by A-CRN or a damped Newton system solved by Algorithm~\ref{alg:NCG}. We observe that both methods find a $10^{-4}$-FOSP of \eqref{nn} with comparable objective values. However,  Algorithm~\ref{alg:NCG} is substantially faster than A-CRN \cite{GeNe17NH}. 

\section{Proof of the main results}\label{sec:proof}
In this section we provide a proof of our main results presented in Sections~\ref{sec:pd-ncg} and \ref{sec:ncg}, which are particularly Theorems~\ref{thm:c-pd}-\ref{thm:c-ncg-sosp}.

To proceed, we first establish several technical lemmas. The following lemma demonstrates that $\nabla f$ admits a first-order approximation with a controllable error, which will play a crucial role in our subsequent analysis. This result is inspired by \cite{Ne15}, where it was shown that a function with a H\"older continuous gradient admits a first-order approximation with a controllable error.

\begin{lemma}\label{lem:inexact}
Under Assumption~\ref{asp:NCG-cmplxity}, the following inequality holds for any $\delta>0$: 
\begin{align}
\|\nabla f(y)-\nabla f(x)-\nabla^2 f(x)(y-x)\|\le \frac{1}{2}L(\delta)\|y-x\|^2 + \delta,\quad \forall x,y\in\Omega,\label{i-HL-1}    
\end{align}
where
\begin{equation}\label{L1-L2}
L(\delta) = \left(\frac{1-\nu}{2\delta(1+\nu)}\right)^{\frac{1-\nu}{1+\nu}}H_\nu^{\frac{2}{1+\nu}}, \quad\forall\delta>0.\footnote{By convention, $0^0$ is set to $1$ throughout this paper.}
\end{equation}
\end{lemma}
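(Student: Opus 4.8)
The plan is to start from the H\"older continuity bound \eqref{apx-nxt-grad}, namely $\|\nabla f(y)-\nabla f(x)-\nabla^2 f(x)(y-x)\|\le \frac{H_\nu}{1+\nu}\|y-x\|^{1+\nu}$, and then bound the power $\|y-x\|^{1+\nu}$ by a combination of $\|y-x\|^2$ and a constant using Young's inequality. Specifically, writing $t=\|y-x\|\ge 0$, we want an inequality of the form $\frac{H_\nu}{1+\nu}t^{1+\nu}\le \frac12 L(\delta)t^2+\delta$, and we must choose $L(\delta)$ as small as possible so that this holds for all $t\ge 0$.

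First I would reduce to the scalar optimization: the best $L(\delta)$ is obtained by requiring $\sup_{t\ge 0}\big(\frac{H_\nu}{1+\nu}t^{1+\nu}-\frac12 L t^2\big)\le\delta$. If $\nu=1$ the left side is zero as long as $L\ge H_1$, matching $L(\delta)=H_1=H_\nu^{2/(1+\nu)}$ (with the convention $0^0=1$), so that degenerate case is immediate and I would dispatch it first. For $\nu\in[0,1)$, I would compute the maximizer of $g(t):=\frac{H_\nu}{1+\nu}t^{1+\nu}-\frac12 L t^2$ by setting $g'(t)=H_\nu t^{\nu}-Lt=0$, giving $t_\star=(H_\nu/L)^{1/(1-\nu)}$, and then substitute back to get $g(t_\star)=\big(\frac{1}{1+\nu}-\frac12\big)H_\nu t_\star^{1+\nu}=\frac{1-\nu}{2(1+\nu)}H_\nu\big(H_\nu/L\big)^{(1+\nu)/(1-\nu)}$. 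Setting this equal to $\delta$ and solving for $L$ yields exactly $L=\big(\frac{1-\nu}{2\delta(1+\nu)}\big)^{(1-\nu)/(1+\nu)}H_\nu^{2/(1+\nu)}$, which is the claimed $L(\delta)$ in \eqref{L1-L2}. Combining the two cases and noting $x,y\in\Omega$ with $\Omega$ convex so that \eqref{apx-nxt-grad} applies completes the proof.

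The only mild subtlety — and the step I would be most careful about — is the bookkeeping with exponents and the $0^0$ convention when $\nu\to 1$, making sure the formula for $L(\delta)$ matches the $\nu=1$ Lipschitz case continuously; everything else is a routine one-variable calculus computation (or, alternatively, a direct application of Young's inequality $ab\le \frac{a^p}{p}+\frac{b^q}{q}$ with the conjugate exponents $p=2/(1+\nu)$ and $q=2/(1-\nu)$ applied to $\frac{H_\nu}{1+\nu}t^{1+\nu}=\big(c_1 t^{1+\nu}\big)\cdot 1$ after a suitable scaling). I would likely present it via Young's inequality for brevity: write $\frac{H_\nu}{1+\nu}t^{1+\nu} = \big(\tfrac12 L t^2\big)^{(1+\nu)/2}\cdot\big(\text{const}\big)$, apply Young with exponents $\frac{2}{1+\nu}$ and $\frac{2}{1-\nu}$, and read off that the constant equals $\delta$ precisely when $L=L(\delta)$.
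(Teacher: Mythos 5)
Your proposal is correct and follows essentially the same route as the paper: reduce via \eqref{apx-nxt-grad} to the scalar inequality $\tfrac{H_\nu}{1+\nu}t^{1+\nu}\le\tfrac12 L(\delta)t^2+\delta$, then establish it either by Young's inequality with conjugate exponents $2/(1+\nu)$ and $2/(1-\nu)$ (which is what the paper does) or, equivalently, by directly optimizing over $t$ to identify the sharpest $L$ — both computations yield the same $L(\delta)$. The only cosmetic difference is that you phrase the split as $\nu=1$ versus $\nu\in[0,1)$, whereas the paper's prose says $\nu\in(0,1)$ even though its Young's-inequality step works verbatim for $\nu=0$; your version is the cleaner bookkeeping.
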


\begin{proof}
When $\nu=1$, it follows from \eqref{apx-nxt-grad} that \eqref{i-HL-1} holds. For the rest of the proof, we suppose that $\nu\in(0,1)$. By Young's inequality, one has that  $\tau s \le \tau^p/p + s^q/q$ for all $\tau,s\ge0$, where $p,q\ge1$ satisfy $1/p+1/q=1$. Taking $\tau=t^{1+\nu}$, $p=2/(1+\nu)$, and $q=2/(1-\nu)$, we obtain that 
\[
t^{1+\nu} \le \frac{1+\nu}{2s} t^2 + \frac{1-\nu}{2} s^{\frac{1+\nu}{1-\nu}},\quad \forall t\ge0,s>0.
\]
Further, letting $t=\|y-x\|$, $s=\left(\frac{2\delta(1+\nu)}{H_\nu(1-\nu)}\right)^{\frac{1-\nu}{1+\nu}}$, and multiplying both sides of the above inequality by $H_\nu/(1+\nu)$, we have
\[
\frac{H_\nu}{1+\nu}\|y-x\|^{1+\nu} \le \frac{1}{2}\left(\frac{1-\nu}{2\delta(1+\nu)}\right)^{\frac{1-\nu}{1+\nu}} H_\nu^{\frac{2}{1+\nu}}\|y-x\|^2 + \delta,
\]
which along with \eqref{apx-nxt-grad} and \eqref{L1-L2} implies that \eqref{i-HL-1} holds.
\end{proof}

The following lemma provides a useful property of $L(\cdot)$.

\begin{lemma}
For any $a\ge2$, we have
\begin{align}
L(\epsilon_g/a)\le a\gamma_\nu(\epsilon_g)/8,\label{L1-bd-gma}
\end{align}
where $\gamma_\nu(\cdot)$ and $L(\cdot)$ are defined in \eqref{gma-eps} and \eqref{L1-L2}, respectively.
\end{lemma}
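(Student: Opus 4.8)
The plan is to prove the inequality $L(\epsilon_g/a)\le a\gamma_\nu(\epsilon_g)/8$ by directly substituting $\delta = \epsilon_g/a$ into the formula \eqref{L1-L2} for $L(\delta)$ and then comparing the resulting expression with the definition \eqref{gma-eps} of $\gamma_\nu(\epsilon_g)$. First I would handle the boundary case $\nu=1$ separately, where by convention $0^0=1$ and hence $L(\delta)=H_\nu$ is constant in $\delta$, while $\gamma_1(\epsilon_g)=4H_\nu$; in this case the claimed bound reads $H_\nu\le a\cdot 4H_\nu/8 = aH_\nu/2$, which holds since $a\ge2$. For $\nu\in[0,1)$, substituting $\delta=\epsilon_g/a$ gives
\[
L(\epsilon_g/a) = \left(\frac{a(1-\nu)}{2\epsilon_g(1+\nu)}\right)^{\frac{1-\nu}{1+\nu}}H_\nu^{\frac{2}{1+\nu}},
\]
and I would compare this against $a\gamma_\nu(\epsilon_g)/8 = \tfrac{a}{2}H_\nu^{2/(1+\nu)}\epsilon_g^{-(1-\nu)/(1+\nu)}$.

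The key step is to observe that the powers of $\epsilon_g$ match on both sides (both are $\epsilon_g^{-(1-\nu)/(1+\nu)}$), as do the powers of $H_\nu$ (both are $H_\nu^{2/(1+\nu)}$), so after cancelling these common factors the inequality reduces to the purely numerical claim
\[
\left(\frac{a(1-\nu)}{2(1+\nu)}\right)^{\frac{1-\nu}{1+\nu}}\le \frac{a}{2}.
\]
To establish this, note that the exponent satisfies $\tfrac{1-\nu}{1+\nu}\in[0,1)$ for $\nu\in[0,1)$, and the base $\tfrac{a(1-\nu)}{2(1+\nu)}$ is at most $\tfrac{a}{2}$ since $\tfrac{1-\nu}{1+\nu}\le 1$. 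Because $a\ge 2$ forces $\tfrac{a}{2}\ge1$, raising the number $\tfrac{a(1-\nu)}{2(1+\nu)}\le \tfrac{a}{2}$ to a power in $[0,1)$ can only decrease it when it is $\ge1$, or leave it below $\tfrac{a}{2}\ge1$ when it is $<1$; either way the left side is at most $\max\{1,\tfrac{a}{2}\}=\tfrac{a}{2}$. I would phrase this last point carefully: if $t\ge0$ and $p\in[0,1]$ then $t^p\le\max\{1,t\}$, applied with $t=\tfrac{a(1-\nu)}{2(1+\nu)}$ and $p=\tfrac{1-\nu}{1+\nu}$, gives $t^p\le\max\{1,t\}\le\max\{1,\tfrac{a}{2}\}=\tfrac{a}{2}$.

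I do not expect any serious obstacle here; this is a routine substitution-and-estimate argument. The only mild subtlety is bookkeeping the boundary conventions ($0^0=1$ at $\nu=1$, and the exponent $\tfrac{1-\nu}{1+\nu}=0$ at $\nu=1$) so that the $\nu=1$ case is consistent with the general formula, and making sure the elementary inequality $t^p\le\max\{1,t\}$ for $p\in[0,1]$ is invoked cleanly rather than splitting into cases on whether the base exceeds $1$. I would write the proof in three or four lines: dispose of $\nu=1$, substitute for $\nu\in[0,1)$, cancel the common $H_\nu$ and $\epsilon_g$ powers, and finish with the elementary inequality.
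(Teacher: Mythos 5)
Your proof is correct and follows essentially the same route as the paper: substitute $\delta=\epsilon_g/a$ into \eqref{L1-L2}, cancel the common factors of $H_\nu^{2/(1+\nu)}$ and $\epsilon_g^{-(1-\nu)/(1+\nu)}$, and reduce to the purely numerical inequality $\bigl(\tfrac{a(1-\nu)}{2(1+\nu)}\bigr)^{(1-\nu)/(1+\nu)}\le \tfrac{a}{2}$. Where you finish via $t^p\le\max\{1,t\}$ for $p\in[0,1]$, the paper factors the base as $\tfrac{a}{2}\cdot\tfrac{1-\nu}{1+\nu}$ and uses $\beta^\beta\le1$ together with $(a/2)^\beta\le a/2$ for $a\ge2$; the two finishes are equivalent elementary estimates.
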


\begin{proof}
By $a\ge2$, $\nu\in[0,1]$, \eqref{gma-eps}, and \eqref{L1-L2}, one has
\[
L\left(\frac{\epsilon_g}{a}\right)\overset{\eqref{L1-L2}}{=} \left(\frac{a(1-\nu)}{2\epsilon_g(1+\nu)}\right)^{\frac{1-\nu}{1+\nu}}H_\nu^{\frac{2}{1+\nu}}\epsilon_g^{-\frac{1-\nu}{1+\nu}}\le \frac{a}{2} H_\nu^{\frac{2}{1+\nu}}\epsilon_g^{-\frac{1-\nu}{1+\nu}} \overset{\eqref{gma-eps}}{=} \frac{a}{8}\gamma_{\nu}(\epsilon_g),
\]
where the first inequality is due to $\nu\in[0,1]$ and $\alpha^\alpha\le 1$ for all $\alpha\in[0,1]$, and the second inequality is due to $\nu\in[0,1]$ and $a\ge 2$. Hence, the conclusion of this lemma holds.
\end{proof}

The next lemma provides two equivalent reformulations of the inequality $\gamma\ge\gamma_\nu(\epsilon_g)$. It will be repeatedly used in our subsequent analysis.

\begin{lemma}\label{lem:tech-gammamu-eps}
Let $\gamma_\nu(\epsilon_g)$ be defined in \eqref{gma-eps}. Then, $\gamma\ge\gamma_\nu(\epsilon_g)$ is equivalent to each of the following two inequalities:
\begin{align}
&(\gamma\epsilon_g)^{1/2}/H_\nu \ge 2^{1+\nu}(\epsilon_g/\gamma)^{\nu/2},\label{rela:Hnu-gammanu-1}\\
&(\gamma\epsilon_g)^{(1-\nu)/2}/H_\nu \ge 2^{1+\nu}/\gamma^\nu.\label{rela:Hnu-gammanu-2}
\end{align}    
\end{lemma}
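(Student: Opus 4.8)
The statement is a straightforward algebraic equivalence, so the plan is to manipulate the defining relation $\gamma \ge \gamma_\nu(\epsilon_g)$ directly using the explicit formula \eqref{gma-eps}, namely $\gamma_\nu(\epsilon_g) = 4 H_\nu^{2/(1+\nu)}\epsilon_g^{-(1-\nu)/(1+\nu)}$. First I would rewrite $\gamma \ge \gamma_\nu(\epsilon_g)$ as $\gamma^{1+\nu} \ge 4^{1+\nu} H_\nu^2 \epsilon_g^{-(1-\nu)}$ by raising both sides to the power $1+\nu$ (valid since both sides are positive and $t\mapsto t^{1+\nu}$ is increasing). Equivalently, multiplying through by $\epsilon_g^{1-\nu}$, this reads $\gamma^{1+\nu}\epsilon_g^{1-\nu} \ge 4^{1+\nu} H_\nu^2$, which I would record as a convenient pivot form.

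From that pivot, \eqref{rela:Hnu-gammanu-2} follows by noting $\gamma^{1+\nu}\epsilon_g^{1-\nu} = \gamma^{2\nu}\cdot(\gamma\epsilon_g)^{1-\nu}$, so the pivot becomes $(\gamma\epsilon_g)^{1-\nu} \ge 4^{1+\nu} H_\nu^2/\gamma^{2\nu}$; taking square roots of both sides (again both positive) yields $(\gamma\epsilon_g)^{(1-\nu)/2} \ge 2^{1+\nu} H_\nu/\gamma^\nu$, i.e. $(\gamma\epsilon_g)^{(1-\nu)/2}/H_\nu \ge 2^{1+\nu}/\gamma^\nu$, which is \eqref{rela:Hnu-gammanu-2}. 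For \eqref{rela:Hnu-gammanu-1}, I would instead write $\gamma^{1+\nu}\epsilon_g^{1-\nu} = (\gamma\epsilon_g)\cdot\gamma^\nu\epsilon_g^{-\nu} = (\gamma\epsilon_g)/(\epsilon_g/\gamma)^\nu$, so the pivot becomes $(\gamma\epsilon_g)/(\epsilon_g/\gamma)^\nu \ge 4^{1+\nu}H_\nu^2$, hence $(\gamma\epsilon_g) \ge 4^{1+\nu}H_\nu^2(\epsilon_g/\gamma)^\nu$; taking square roots gives $(\gamma\epsilon_g)^{1/2} \ge 2^{1+\nu}H_\nu(\epsilon_g/\gamma)^{\nu/2}$, i.e. \eqref{rela:Hnu-gammanu-1}. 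Since every step is a reversible equivalence (each operation — raising to a positive power, taking square roots, multiplying/dividing by positive quantities — is monotone and invertible on the positive reals), both inequalities are equivalent to $\gamma \ge \gamma_\nu(\epsilon_g)$, and hence to each other.

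There is no real obstacle here; the only thing to be careful about is bookkeeping of exponents and confirming that each transformation is genuinely an equivalence rather than merely an implication — in particular that all quantities involved ($\gamma$, $\epsilon_g$, $H_\nu$) are strictly positive, which holds by hypothesis ($\epsilon_g\in(0,1)$, $H_\nu>0$, and $\gamma>0$ as a trial regularization value). I would present the argument as a short chain of "if and only if" steps starting from \eqref{gma-eps} and arriving at each of \eqref{rela:Hnu-gammanu-1} and \eqref{rela:Hnu-gammanu-2}.
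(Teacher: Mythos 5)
Your proof is correct and is essentially the same routine algebraic manipulation as the paper's: the paper divides each of \eqref{rela:Hnu-gammanu-1} and \eqref{rela:Hnu-gammanu-2} by a suitable positive quantity to show both reduce to $\gamma^{(1+\nu)/2}\ge 2^{1+\nu}H_\nu\epsilon_g^{(\nu-1)/2}$, which is your pivot $\gamma^{1+\nu}\epsilon_g^{1-\nu}\ge 4^{1+\nu}H_\nu^2$ after squaring and rearranging. The only difference is the direction of the chain (you start from the definition and factor the pivot two ways; the paper starts from the two target inequalities and normalizes each), which is purely presentational.
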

\begin{proof}
Dividing both sides of \eqref{rela:Hnu-gammanu-1} by $\epsilon_g^{1/2}/(H_\nu\gamma^{\nu/2})$ and dividing both sides of \eqref{rela:Hnu-gammanu-2} by $\epsilon_g^{(1-\nu)/2}/(H_\nu\gamma^\nu)$, we observe that  \eqref{rela:Hnu-gammanu-1} and \eqref{rela:Hnu-gammanu-2} are both equivalent to $\gamma^{(1+\nu)/2}\ge 2^{1+\nu}H_\nu\epsilon_g^{(\nu-1)/2}$. Taking the $\left(\frac{1+\nu}{2}\right)$th root of this inequality, it becomes
\begin{align*}
\gamma \ge 4H_\nu^{2/(1+\nu)}\epsilon_g^{-(1-\nu)/(1+\nu)}\overset{\eqref{gma-eps}}{=}\gamma_\nu(\epsilon_g).
\end{align*}
Hence, the conclusion of this lemma holds.
\end{proof}

\subsection{Proof of the main results in Section~\ref{sec:pd-ncg}}\label{subsec:proof1}

In this subsection, we first establish several technical lemmas and then use them to prove Theorems~\ref{thm:c-pd} and \ref{thm:c-pd-sosp}.


The following lemma presents some useful properties of the output of Algorithm~\ref{alg:capped-CG} when applied to solving the damped Newton system~\eqref{damp-n}. It is a direct consequence of Lemma~\ref{lem:ppt-cg} given in Appendix~\ref{appendix:capped-CG}.

\begin{lemma}\label{lem:SOL-NC-ppty}
Suppose that Assumption~\ref{asp:NCG-cmplxity} holds and the direction $d^k$ results from Algorithm~\ref{alg:capped-CG} with a type specified in d$\_$type at some iteration $k$ of Algorithm~\ref{alg:NCG-pd}. Then the following statements hold.
\begin{enumerate}[{\rm (i)}]
\item If d$\_$type=SOL, then  $d^k$ satisfies
\begin{eqnarray}
&(\gamma_\nu(\epsilon_g)\epsilon_g)^{1/2}\|d^k\|^2\le (d^k)^T(\nabla^2 f(x^k)+2(\gamma_\nu(\epsilon_g)\epsilon_g)^{1/2}I)d^k,\label{SOL-ppty-1}\\
&\|d^k\|\le 1.1(\gamma_\nu(\epsilon_g)\epsilon_g)^{-1/2}\|\nabla f(x^k)\|,\label{SOL-ppty-upbd}\\
&(d^k)^T\nabla f(x^k)=-(d^k)^T(\nabla^2f(x^k)+2 (\gamma_\nu(\epsilon_g)\epsilon_g)^{1/2}I)d^k,\label{SOL-ppty-2}\\
&\|(\nabla^2f(x^k)+2(\gamma_\nu(\epsilon_g)\epsilon_g)^{1/2}I)d^k+\nabla f(x^k)\|\le \zeta(\gamma_\nu(\epsilon_g)\epsilon_g)^{1/2}\|d^k\|/2.\label{SOL-ppty-3}    
\end{eqnarray}
\item If d$\_$type=NC, then $d^k$ satisfies $(d^k)^T\nabla f(x^k)\le0$ and
\begin{equation*}
(d^k)^T\nabla^2 f(x^k) d^k/\|d^k\|^2=-\min\{1,\gamma_\nu(\epsilon_g)\}\|d^k\|\le-(\gamma_\nu(\epsilon_g)\epsilon_g)^{1/2}.
\end{equation*}
\end{enumerate}
\end{lemma}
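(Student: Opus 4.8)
The claim is asserted to be "a direct consequence of Lemma~\ref{lem:ppt-cg}" in the appendix, so the proof is essentially a matter of specializing that general capped-CG lemma to the particular linear system~\eqref{damp-n} solved at iteration $k$ of Algorithm~\ref{alg:NCG-pd}, together with some bookkeeping on how $d^k$ is formed from the raw capped-CG output $d$. The plan is to invoke Lemma~\ref{lem:ppt-cg} with the substitutions $H=\nabla^2 f(x^k)$, $\varepsilon=(\gamma_\nu(\epsilon_g)\epsilon_g)^{1/2}$, $g=\nabla f(x^k)$, and $U=0$, exactly as Algorithm~\ref{alg:NCG-pd} calls Algorithm~\ref{alg:capped-CG}, and then translate each of the conclusions of that lemma into the notation of the present statement.

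\textbf{Case d\_type = SOL.} Here $d^k=d$ is returned directly. The generic lemma gives: (a) $d^T(H+2\varepsilon I)d\ge \varepsilon\|d\|^2$ — which is exactly \eqref{SOL-ppty-1} after substituting $\varepsilon=(\gamma_\nu(\epsilon_g)\epsilon_g)^{1/2}$; (b) a norm bound of the form $\|d\|\le 1.1\varepsilon^{-1}\|g\|$, giving \eqref{SOL-ppty-upbd}; (c) the identity $g^Td=-d^T(H+2\varepsilon I)d$ (this is the standard property that the capped-CG iterate is conjugate-gradient-generated, so $r^Td=0$ at termination where $r=(H+2\varepsilon I)d+g$, hence $g^Td=-d^T(H+2\varepsilon I)d$), giving \eqref{SOL-ppty-2}; and (d) the residual bound $\|(H+2\varepsilon I)d+g\|\le\hat\zeta\|g\|$, which in the capped-CG method is actually controlled by $\hat\zeta\varepsilon\|d\|/2$ or equivalently the $\zeta$-accuracy condition, yielding \eqref{SOL-ppty-3}. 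I would simply quote the relevant numbered inequalities from Lemma~\ref{lem:ppt-cg} and substitute; no genuine computation is needed beyond matching constants.

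\textbf{Case d\_type = NC.} Here the raw output $d$ satisfies $d^THd<-\varepsilon\|d\|^2$, and Algorithm~\ref{alg:NCG-pd} rescales it to
\[
d^k=-\sgn(d^T\nabla f(x^k))\max\{1,1/\gamma_\nu(\epsilon_g)\}\frac{|d^T\nabla^2 f(x^k)d|}{\|d\|^3}d.
\]
The sign factor makes $(d^k)^T\nabla f(x^k)\le 0$ immediate (if $d^T\nabla f(x^k)=0$ then $(d^k)^T\nabla f(x^k)=0$; otherwise the sign is flipped to be nonpositive). For the curvature identity, note $d^k$ is a positive scalar multiple of $\pm d$, say $d^k=\beta d$ with $\beta=\max\{1,1/\gamma_\nu(\epsilon_g)\}|d^TH d|/\|d\|^3$ up to sign; then $(d^k)^TH d^k/\|d^k\|^2 = d^THd/\|d\|^2$, which is scale-invariant, so I need the precise value $d^THd/\|d\|^2 = -\min\{1,\gamma_\nu(\epsilon_g)\}\|d^k\|$. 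This follows by plugging in $\|d^k\|=\beta\|d\|=\max\{1,1/\gamma_\nu(\epsilon_g)\}|d^THd|/\|d\|^2$ and using $\max\{1,1/\gamma_\nu(\epsilon_g)\}\min\{1,\gamma_\nu(\epsilon_g)\}=1$ together with $d^THd=-|d^THd|$ (valid since $d^THd<0$). Finally the inequality $-\min\{1,\gamma_\nu(\epsilon_g)\}\|d^k\| \le -(\gamma_\nu(\epsilon_g)\epsilon_g)^{1/2}$ should reduce to a lower bound on $\|d^k\|$, namely $\min\{1,\gamma_\nu(\epsilon_g)\}\|d^k\|\ge(\gamma_\nu(\epsilon_g)\epsilon_g)^{1/2}$; since $\|d^k\|$ equals $\max\{1,1/\gamma_\nu(\epsilon_g)\}|d^THd|/\|d\|^2 \ge \max\{1,1/\gamma_\nu(\epsilon_g)\}\varepsilon$ (using $|d^THd|/\|d\|^2>\varepsilon$ from the NC condition), multiplying by $\min\{1,\gamma_\nu(\epsilon_g)\}$ gives $\ge\varepsilon=(\gamma_\nu(\epsilon_g)\epsilon_g)^{1/2}$, as desired.

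\textbf{Expected main obstacle.} There is no deep obstacle — the result is bookkeeping. The one place to be careful is the NC case: keeping track of the rescaling factor $\max\{1,1/\gamma_\nu(\epsilon_g)\}$, using the algebraic identity $\max\{1,1/\gamma_\nu(\epsilon_g)\}\cdot\min\{1,\gamma_\nu(\epsilon_g)\}=1$, and correctly handling the degenerate sign case $d^T\nabla f(x^k)=0$. Everything in the SOL case is a verbatim specialization of the appendix lemma, so the write-up will mostly consist of citing Lemma~\ref{lem:ppt-cg} and performing the substitution $\varepsilon=(\gamma_\nu(\epsilon_g)\epsilon_g)^{1/2}$.
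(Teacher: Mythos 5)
Your proposal is correct and takes the same route as the paper: the paper presents Lemma~\ref{lem:SOL-NC-ppty} as "a direct consequence of Lemma~\ref{lem:ppt-cg}," and your argument is exactly that specialization with $H=\nabla^2 f(x^k)$, $g=\nabla f(x^k)$, $\varepsilon=(\gamma_\nu(\epsilon_g)\epsilon_g)^{1/2}$, together with the careful accounting of the rescaling factor $\max\{1,1/\gamma_\nu(\epsilon_g)\}$ in the NC case. The only small blemish is the aside in the SOL case about $\hat\zeta\|g\|$ versus $\zeta\varepsilon\|d\|/2$ (Lemma~\ref{lem:ppt-cg} states the latter form directly, so no reconciliation is needed), and the NC curvature inequality from Lemma~\ref{lem:ppt-cg} is $d^THd/\|d\|^2\le-\varepsilon$ rather than strict, but neither affects the conclusion.
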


The next lemma shows that when the search direction $d^k$ in Algorithm~\ref{alg:NCG-pd} is of type `SOL', either both $f(x^k+d^k)\le f(x^k)$ and $\|\nabla f(x^k+d^k)\|\le\epsilon_g$ hold, or $\|d^k\|$ is uniformly bounded below.

\begin{lemma}\label{lem:next-FOSP-or-large-step}
Suppose that Assumption~\ref{asp:NCG-cmplxity} holds and the direction $d^k$ results from Algorithm~\ref{alg:capped-CG} with d$\_$type=SOL at some iteration $k$ of Algorithm~\ref{alg:NCG-pd}. Then, either both $f(x^k+d^k)\le f(x^k)$ and $\|\nabla f(x^k+d^k)\|\le\epsilon_g$ hold, or $6\|d^k\| \ge (\epsilon_g/\gamma_\nu(\epsilon_g))^{1/2}$ holds, where $\gamma_\nu(\epsilon_g)$ is defined in \eqref{gma-eps}.
\end{lemma}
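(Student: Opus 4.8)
The plan is to suppose that $6\|d^k\| < (\epsilon_g/\gamma_\nu(\epsilon_g))^{1/2}$, i.e. $\|d^k\|$ is small, and deduce that the full step $d^k$ must already produce both a decrease in $f$ and a sufficiently small gradient at $x^k + d^k$. So fix an iteration $k$ where $d\_$type$=$SOL and write $\gamma := \gamma_\nu(\epsilon_g)$ and $\varepsilon := (\gamma\epsilon_g)^{1/2}$ for brevity. The starting material is Lemma~\ref{lem:SOL-NC-ppty}(i): the four inequalities \eqref{SOL-ppty-1}--\eqref{SOL-ppty-3} control the curvature along $d^k$, the norm of $d^k$, the directional derivative $(d^k)^T\nabla f(x^k)$, and the residual of the damped Newton system.

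First I would bound the function decrease. Using the descent-type inequality \eqref{desc-ineq} along $y = x^k + d^k$, together with \eqref{SOL-ppty-2} to replace $\nabla f(x^k)^T d^k + \tfrac12 (d^k)^T\nabla^2 f(x^k)d^k$ by something of the form $-\varepsilon\|d^k\|^2 + (\text{curvature correction})$, and then \eqref{SOL-ppty-1} to absorb the correction, one gets $f(x^k+d^k) \le f(x^k) - c\,\varepsilon\|d^k\|^2 + \tfrac{H_\nu}{(1+\nu)(2+\nu)}\|d^k\|^{2+\nu}$ for a suitable constant $c$. The H\"older term is $\|d^k\|^\nu$ times $\|d^k\|^2$, and the smallness assumption $6\|d^k\| < (\epsilon_g/\gamma)^{1/2}$ forces $\|d^k\|^\nu$ to be dominated by a multiple of $\varepsilon = (\gamma\epsilon_g)^{1/2}$ — this is where Lemma~\ref{lem:tech-gammamu-eps}, specifically \eqref{rela:Hnu-gammanu-1}, enters: it converts $\gamma \ge \gamma_\nu(\epsilon_g)$ into exactly the statement that $H_\nu (\epsilon_g/\gamma)^{\nu/2}$ is controlled by $\varepsilon$. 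Hence the negative quadratic term beats the positive H\"older term and $f(x^k+d^k) \le f(x^k)$.

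Next I would bound $\|\nabla f(x^k+d^k)\|$ via the triangle inequality:
\[
\|\nabla f(x^k+d^k)\| \le \|\nabla f(x^k) + \nabla^2 f(x^k)d^k + 2\varepsilon d^k\| + 2\varepsilon\|d^k\| + \|\nabla f(x^k+d^k) - \nabla f(x^k) - \nabla^2 f(x^k)d^k\|.
\]
The first term is the damped-Newton residual, bounded by $\tfrac{\zeta}{2}\varepsilon\|d^k\|$ from \eqref{SOL-ppty-3}; the second is $2\varepsilon\|d^k\|$; the third is bounded by $\tfrac{H_\nu}{1+\nu}\|d^k\|^{1+\nu}$ from \eqref{apx-nxt-grad}, which is $\|d^k\|^\nu$ times $\|d^k\|$ and again, by the same application of \eqref{rela:Hnu-gammanu-1}, is at most a constant times $\varepsilon\|d^k\|$. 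So altogether $\|\nabla f(x^k+d^k)\| \le C\varepsilon\|d^k\|$ for an absolute constant $C$ (depending only on $\zeta$). Now invoke the smallness assumption once more: $\varepsilon\|d^k\| = (\gamma\epsilon_g)^{1/2}\|d^k\| < (\gamma\epsilon_g)^{1/2}\cdot\tfrac16(\epsilon_g/\gamma)^{1/2} = \epsilon_g/6$, so $\|\nabla f(x^k+d^k)\| < C\epsilon_g/6 \le \epsilon_g$ provided the constant $C$ comes out at most $6$ — I would track the constants carefully to confirm this (the numerical factor $6$ in the statement is presumably chosen precisely to make this work), using $\zeta \in (0,1)$ and $\nu \in [0,1]$ to bound the constant uniformly.

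The main obstacle I anticipate is bookkeeping on the constants: ensuring that the combined bound on $\|\nabla f(x^k+d^k)\|$ genuinely falls below $\epsilon_g$ rather than just $O(\epsilon_g)$, which requires the residual bound \eqref{SOL-ppty-3}, the damping term, and the H\"older remainder to each be pinned down with explicit coefficients, and requires that the exponent manipulation $\|d^k\|^{1+\nu} \le \|d^k\| \cdot (\text{const})\cdot\varepsilon/H_\nu$ be done uniformly in $\nu\in[0,1]$ (the endpoint $\nu=1$ should be checked separately or seen to be covered, since Lemma~\ref{lem:inexact}'s split handled $\nu=1$ directly). A secondary subtlety is that $\|d^k\|^\nu$ small requires $\|d^k\| \le 1$ when $\nu < 1$ to conclude $\|d^k\|^\nu$ is comparable to the relevant power — but $\|d^k\| \le (\epsilon_g/\gamma)^{1/2}/6 < 1$ follows from $\gamma \ge \gamma_\nu(\epsilon_g) \ge 4$ (since $H_\nu>0$, $\epsilon_g<1$) or more simply from \eqref{SOL-ppty-upbd} combined with the bound on $\|\nabla f(x^k)\|$, so this is not a real difficulty, merely a point to state cleanly.
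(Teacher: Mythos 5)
There is a genuine gap in your argument for the function decrease. You propose to apply \eqref{desc-ineq} directly with $x = x^k$ and $y = x^k + d^k$, but that inequality is only valid for $x,y \in \Omega$, and at this point you have no guarantee that $x^k + d^k \in \Omega$: Assumption~\ref{asp:NCG-cmplxity}(b) only posits that $\Omega$ is \emph{some} bounded open neighborhood of the level set $\scL_f(x^0)$, and it could be arbitrarily thin. The smallness of $\|d^k\|$ implied by $6\|d^k\| < (\epsilon_g/\gamma_\nu(\epsilon_g))^{1/2}$ is a fixed quantity depending on $\epsilon_g$, $\nu$, $H_\nu$, and is not guaranteed to be smaller than the distance from $\scL_f(x^0)$ to $\partial\Omega$. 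This is precisely why the paper's proof argues by contradiction instead: assuming $f(x^k+d^k) > f(x^k)$, it extracts a local minimizer $\alpha_* \in (0,1)$ of $\varphi(\alpha) = f(x^k + \alpha d^k)$ with $\varphi(\alpha_*) < \varphi(0)$, so that $f(x^k + \alpha_* d^k) < f(x^k) \le f(x^0)$ certifies $x^k + \alpha_* d^k \in \scL_f(x^0) \subset \Omega$ \emph{before} \eqref{apx-nxt-grad} is invoked at that point, and then derives a contradiction from the lower bound on $\|d^k\|^\nu$ that results. Your gradient bound then inherits the same gap, since you apply \eqref{apx-nxt-grad} at $y = x^k+d^k$; the paper is careful to apply \eqref{i-HL-1} there only \emph{after} $f(x^k+d^k) \le f(x^k)$ has been established, at which point membership in $\Omega$ is secured.

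Setting the domain issue aside, your constant-tracking is sound: your bound on the H\"older remainder via \eqref{rela:Hnu-gammanu-1} gives $\frac{H_\nu}{1+\nu}\|d^k\|^{1+\nu} \le \frac{1}{2}(\gamma_\nu(\epsilon_g)\epsilon_g)^{1/2}\|d^k\|$ uniformly in $\nu \in [0,1]$, so $\|\nabla f(x^k+d^k)\| \le (\zeta/2 + 5/2)(\gamma_\nu(\epsilon_g)\epsilon_g)^{1/2}\|d^k\| < 3 \cdot \epsilon_g/6 = \epsilon_g/2$, which is in fact a cleaner route than the paper's, which goes through $L(\epsilon_g/2)$ and Lemma~\ref{lem:inexact}. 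To repair the proof, restructure the decrease argument as a contradiction exactly as the paper does, and invoke \eqref{apx-nxt-grad} only at the interior point $x^k + \alpha_* d^k$; once $f(x^k+d^k) \le f(x^k)$ is secured, your version of the gradient estimate goes through.
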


\begin{proof}
Since $d^k$ results from Algorithm~\ref{alg:capped-CG} with d$\_$type=SOL, we see that $\|\nabla f(x^k)\|>\epsilon_g$ and \eqref{SOL-ppty-1}-\eqref{SOL-ppty-3} hold for $d^k$. Moreover, by $\|\nabla f(x^k)\|>\epsilon_g$ and \eqref{SOL-ppty-3}, we conclude that $d^k\neq 0$.
To prove this lemma, it suffices to show that both $f(x^k+d^k)\le f(x^k)$ and $\|\nabla f(x^k+d^k)\|\le\epsilon_g$ hold under the condition $6\|d^k\|< (\epsilon_g/\gamma_\nu(\epsilon_g))^{1/2}$. To this end,  we assume that $6\|d^k\|< (\epsilon_g/\gamma_\nu(\epsilon_g))^{1/2}$ holds throughout the remainder of this proof. 
    
We first prove $f(x^k+d^k)\le f(x^k)$. Suppose for contradiction that $f(x^k+d^k)> f(x^k)$. Let $\varphi(\alpha)=f(x^k + \alpha d^k)$ for all $\alpha$. Then $\varphi(1)>\varphi(0)$. Also, since $d^k\neq0$, one has
\[
\varphi^\prime(0) = \nabla f(x^k)^Td^k \overset{\eqref{SOL-ppty-2}}{=} - (d^k)^T(\nabla^2 f(x^k)+2(\gamma_\nu(\epsilon_g)\epsilon_g)^{1/2}I)d^k\overset{\eqref{SOL-ppty-1}}{\le} -(\gamma_\nu(\epsilon_g)\epsilon_g)^{1/2}\|d^k\|^2<0.
\]
In view of these and $\varphi(1)>\varphi(0)$, one can observe that there exists a local minimizer $\alpha_*\in(0,1)$ of $\varphi$ such that $\varphi^\prime(\alpha_*)=\nabla f(x^k+\alpha_*d^k)^Td^k=0$ and $\varphi(\alpha_*)<\varphi(0)$. Notice that $f$ is descent along the iterates of Algorithm~\ref{alg:NCG-pd}  and hence $f(x^k)\le f(x^0)$. This together with $\varphi(\alpha_*)<\varphi(0)$ implies that $f(x^k+\alpha_*d^k)<f(x^k)\le f(x^0)$. Hence, \eqref{apx-nxt-grad} holds for $x=x^k$ and $y=x^k+\alpha_*d^k$. Using this, $\alpha_*\in(0,1)$, \eqref{SOL-ppty-1}, \eqref{SOL-ppty-2}, and $\nabla f(x^k+\alpha_*d^k)^Td^k=0$, we deduce that
\begin{equation*}
\begin{array}{l}
\frac{\alpha_*^{1+\nu}H_\nu}{1+\nu}\|d^k\|^{2+\nu}\overset{\eqref{apx-nxt-grad}}{\ge}\|d^k\|\|\nabla f(x^{k}+\alpha_*d^k)-\nabla f(x^k)-\alpha_*\nabla^2 f(x^k)d^k\|\\[6pt]
\ge(d^k)^T(\nabla f(x^k+\alpha_*d^k)-\nabla f(x^k)-\alpha_*\nabla^2 f(x^k)d^k)=-(d^k)^T\nabla f(x^k)-\alpha_*(d^k)^T\nabla^2 f(x^k)d^k\\[1pt]
\overset{\eqref{SOL-ppty-2}}{=}(1-\alpha_*)(d^k)^T(\nabla^2 f(x^k)+2(\gamma_\nu(\epsilon_g)\epsilon_g)^{1/2}I)d^k +2 \alpha_*(\gamma_\nu(\epsilon_g)\epsilon_g)^{1/2}\|d^k\|^2\\
\overset{\eqref{SOL-ppty-1}}{\ge}(1+\alpha_*)(\gamma_\nu(\epsilon_g)\epsilon_g)^{1/2}\|d^k\|^2\ge (\gamma_\nu(\epsilon_g)\epsilon_g)^{1/2}\|d^k\|^2.
\end{array}
\end{equation*}
Recall that $\|d^k\|\neq 0$. Dividing both sides of the last inequality by $H_\nu\|d^k\|^2/(1+\nu)$, we obtain that 
\begin{align}\label{equi-alphastar-ge}
\|d^k\|^\nu \ge \alpha_*^{1+\nu} \|d^k\|^\nu \ge (1+\nu)(\gamma_\nu(\epsilon_g)\epsilon_g)^{1/2}/H_\nu,
\end{align}
where the first inequality is due to $\alpha_*\in(0,1)$. In addition, letting $\gamma=\gamma_\nu(\epsilon_g)$ in Lemma~\ref{lem:tech-gammamu-eps}, we obtain from \eqref{rela:Hnu-gammanu-1} that $(\gamma_\nu(\epsilon_g)\epsilon_g)^{1/2}/H_\nu \ge 2^{1+\nu}(\epsilon_g/\gamma_\nu(\epsilon_g))^{\nu/2}$, which together with \eqref{equi-alphastar-ge} implies that
\begin{equation*}
\|d^k\|^\nu \ge (1+\nu) 2^{1+\nu}(\epsilon_g/\gamma_\nu(\epsilon_g))^{\nu/2}.
\end{equation*}
Using this and $6\|d^k\|<(\epsilon_g/\gamma_\nu(\epsilon_g))^{1/2}$, we obtain that $(\epsilon_g/\gamma_\nu(\epsilon_g))^{\nu/2}/6^\nu > (1+\nu) 2^{1+\nu}(\epsilon_g/\gamma_\nu(\epsilon_g))^{\nu/2}$, which yields $(1+\nu) 2^{1+\nu}6^\nu<1$. This contradicts the fact $\nu\in [0,1]$. Hence, $f(x^k+d^k)\le f(x^k)$ holds.

We next prove $\|\nabla f(x^k + d^k)\|\le\epsilon_g$. As shown above, $f(x^k+d^k)\le f(x^k)$ holds, which together with $f(x^k) \le f(x^0)$ implies that $f(x^k+d^k)\le f(x^k)\le f(x^0)$.  It then follows that \eqref{i-HL-1} holds for $x=x^k$ and $y=x^k+d^k$. By this, \eqref{SOL-ppty-3}, and $6\|d^k\| < (\epsilon_g/\gamma_\nu(\epsilon_g))^{1/2}$, one has
\begin{align}
\|\nabla f(x^k + d^k)\| & \le \|\nabla f(x^k+d^k)-\nabla f(x^k)-\nabla^2 f(x^k)d^k\|
 +\|(\nabla^2 f(x^k)+2(\gamma_\nu(\epsilon_g)\epsilon_g)^{1/2}I)d^k +\nabla f(x^k)\| \nonumber\\
&\quad+2(\gamma_\nu(\epsilon_g)\epsilon_g)^{1/2}\|d^k\|\nonumber\\
& \le  \frac{L(\epsilon_g/2)}{2}\|d^k\|^2 + \frac{\epsilon_g}{2} + \frac{4+\zeta}{2}(\gamma_\nu(\epsilon_g)\epsilon_g)^{1/2}\|d^k\| < \frac{L(\epsilon_g/2)}{72}\frac{\epsilon_g}{\gamma_\nu(\epsilon_g)} + \frac{\epsilon_g}{2} + \frac{4 + \zeta}{12} \epsilon_g,\label{gradxkdk-sol-1} 
\end{align}
where the second inequality follows from \eqref{i-HL-1} and \eqref{SOL-ppty-3}, and the third inequality is due to $6\|d^k\| < (\epsilon_g/\gamma_\nu(\epsilon_g))^{1/2}$. Also, notice from \eqref{L1-bd-gma} with $a=2$ that $L(\epsilon_g/2)\le \gamma_\nu(\epsilon_g)/4$. Using this, \eqref{gradxkdk-sol-1}, and $\zeta\in(0,1)$, we obtain that
\[
\|\nabla f(x^k + d^k)\| \le \frac{\epsilon_g}{288} + \frac{\epsilon_g}{2} + \frac{4 + \zeta}{12} \epsilon_g  < \epsilon_g.
\]
Hence, $\|\nabla f(x^k + d^k)\|\le\epsilon_g$ holds as desired. 
\end{proof}

The next lemma shows that when the search direction $d^k$ in Algorithm~\ref{alg:NCG-pd} is of type `SOL', the line search step results in a sufficient reduction in $f$.

\begin{lemma}\label{lem:sol-1}
Suppose that Assumption~\ref{asp:NCG-cmplxity} holds and the direction $d^k$ results from  Algorithm~\ref{alg:capped-CG} with d$\_$type=SOL at some iteration $k$ of Algorithm~\ref{alg:NCG-pd}. Let $U_g$, $\gamma_\nu(\epsilon_g)$, and $c_{\mathrm{sol}}$ be defined in \eqref{lwbd-Hgupbd}, \eqref{gma-eps}, and \eqref{csol-cnc}, respectively. Then the following statements hold.
\begin{enumerate}[{\rm (i)}]
\item The step length $\alpha_k$ is well-defined, and moreover, $\alpha_k\ge \min\{1,(1-\eta)\theta(\epsilon_g/U_g)^{1/2}/2\}$.
\item The next iterate $x^{k+1} = x^k + \alpha_k d^k$ satisfies either $\|\nabla f(x^{k+1})\|\le\epsilon_g$ or
\begin{align}\label{ineq:descent-sol-pd}
f(x^k) - f(x^{k+1}) \ge {c}_{\mathrm{sol}} \epsilon_g^{3/2}/\gamma_\nu(\epsilon_g)^{1/2}.
\end{align}
\end{enumerate}
\end{lemma}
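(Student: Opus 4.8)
\textbf{Proof proposal for Lemma~\ref{lem:sol-1}.}

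The plan is to handle the two cases in the line-search rule for d\_type=SOL separately, and in each case extract both a lower bound on the step length and the sufficient-decrease guarantee. First consider the case where the algorithm accepts $\alpha_k=1$, i.e. $f(x^k+d^k)\le f(x^k)$ and $\|\nabla f(x^k+d^k)\|\le\epsilon_g$. Here statement (i) is trivial since $\alpha_k=1$, and statement (ii) holds because the next iterate already satisfies $\|\nabla f(x^{k+1})\|\le\epsilon_g$. So the real work is in the second case, where $\alpha_k=\theta^{j_k}$ is found by the Armijo-type condition \eqref{ls-sol-stepsize}. By Lemma~\ref{lem:next-FOSP-or-large-step}, being in this case forces $6\|d^k\|\ge(\epsilon_g/\gamma_\nu(\epsilon_g))^{1/2}$, which is the key lower bound on $\|d^k\|$ that I will use repeatedly.

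For part (i) in the backtracking case, I would first verify that the Armijo condition \eqref{ls-sol-stepsize} is satisfied once $\theta^j$ is small enough, so that $j_k$ is well-defined. The standard argument: using the descent inequality \eqref{desc-ineq} (or rather \eqref{i-HL-1} integrated, i.e. the quadratic-plus-$\delta$ bound) together with the curvature property \eqref{SOL-ppty-1} and the identity \eqref{SOL-ppty-2}, one shows $f(x^k+\theta^j d^k)\le f(x^k) - \theta^j(\gamma_\nu\epsilon_g)^{1/2}\|d^k\|^2 + (\text{error terms of order }\theta^{2j}\|d^k\|^{2+\nu}\text{ and }\theta^j\delta)$; choosing $\delta$ appropriately and using $\theta^j\le 1$ one forces the error to be dominated, so \eqref{ls-sol-stepsize} holds for all sufficiently small $\theta^j$. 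Then, to get the explicit lower bound $\alpha_k\ge\min\{1,(1-\eta)\theta(\epsilon_g/U_g)^{1/2}/2\}$: by minimality of $j_k$, the value $\theta^{j_k-1}$ fails \eqref{ls-sol-stepsize}, i.e. $f(x^k+\theta^{j_k-1}d^k) > f(x^k) - \eta(\gamma_\nu\epsilon_g)^{1/2}\theta^{2(j_k-1)}\|d^k\|^2$; combining this failure with the descent estimate gives an inequality that can be solved for $\theta^{j_k-1}$, yielding a lower bound on $\theta^{j_k-1}$ hence on $\alpha_k=\theta^{j_k}$. The upper bound $\|d^k\|\le 1.1(\gamma_\nu\epsilon_g)^{-1/2}\|\nabla f(x^k)\|\le 1.1(\gamma_\nu\epsilon_g)^{-1/2}U_g$ from \eqref{SOL-ppty-upbd} and \eqref{lwbd-Hgupbd} is what converts the bound on $\theta^{j_k-1}$ into the stated form involving $(\epsilon_g/U_g)^{1/2}$.

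For part (ii) in the backtracking case: \eqref{ls-sol-stepsize} directly gives $f(x^k)-f(x^{k+1})\ge\eta(\gamma_\nu\epsilon_g)^{1/2}\alpha_k^2\|d^k\|^2$. Now substitute the lower bound $\alpha_k\ge\min\{1,(1-\eta)\theta(\epsilon_g/U_g)^{1/2}/2\}$ from part (i) and the lower bound $\|d^k\|^2\ge(\epsilon_g/\gamma_\nu(\epsilon_g))/36$ coming from Lemma~\ref{lem:next-FOSP-or-large-step}. This yields a decrease of order $(\gamma_\nu\epsilon_g)^{1/2}\cdot(\text{const})\cdot(\epsilon_g/\gamma_\nu)$, but one has to be careful about which branch of the $\min$ in $\alpha_k$ is active. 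I expect the bookkeeping here — matching the two sub-cases of $\alpha_k$ against the definition of $c_{\mathrm{sol}}$ in \eqref{csol-cnc}, and in particular tracking how the factor $(\epsilon_g/U_g)^{1/2}$ combines with powers of $\epsilon_g$ and $\gamma_\nu(\epsilon_g)$ to produce exactly the claimed $\epsilon_g^{3/2}/\gamma_\nu(\epsilon_g)^{1/2}$ — to be the main obstacle; the two entries in the $\min$ defining $c_{\mathrm{sol}}$ presumably correspond precisely to the $\alpha_k=1$ branch and the $\alpha_k=(1-\eta)\theta(\epsilon_g/U_g)^{1/2}/2$ branch, and getting the constants to line up cleanly is where the care is needed. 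Everything else is a routine application of the already-established inequalities \eqref{i-HL-1}, \eqref{L1-bd-gma}, \eqref{SOL-ppty-1}--\eqref{SOL-ppty-upbd}, and Lemma~\ref{lem:tech-gammamu-eps}.
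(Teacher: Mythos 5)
Your overall decomposition is right — reduce to the backtracking branch, where Lemma~\ref{lem:next-FOSP-or-large-step} guarantees $6\|d^k\|\ge(\epsilon_g/\gamma_\nu(\epsilon_g))^{1/2}$ — but your plan for part (ii) has a genuine gap, which you half-noticed but misfiled as bookkeeping. Substituting the $U_g$-form of the step bound, $\alpha_k\ge(1-\eta)\theta(\epsilon_g/U_g)^{1/2}/2$, together with $\|d^k\|^2\ge(\epsilon_g/\gamma_\nu(\epsilon_g))/36$ into $\eta(\gamma_\nu(\epsilon_g)\epsilon_g)^{1/2}\alpha_k^2\|d^k\|^2$ produces a decrease of order
\begin{align*}
\frac{\epsilon_g^{5/2}}{U_g\,\gamma_\nu(\epsilon_g)^{1/2}},
\end{align*}
a factor of $\epsilon_g/U_g$ smaller than the claimed $\epsilon_g^{3/2}/\gamma_\nu(\epsilon_g)^{1/2}$. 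No rearrangement of exponents repairs this, and indeed $c_{\mathrm{sol}}$ in \eqref{csol-cnc} contains no $U_g$. The correct move is to stop at the intermediate, $\|d^k\|$-dependent step bound $\alpha_k\ge \tfrac{2(1-\eta)\theta}{3}(\epsilon_g/\gamma_\nu(\epsilon_g))^{1/4}\|d^k\|^{-1/2}$ (i.e.\ \eqref{lbd-step-size-sol}), which makes $\alpha_k^2\|d^k\|^2$ scale like $\|d^k\|$ rather than $\|d^k\|^2$; combined with $6\|d^k\|\ge(\epsilon_g/\gamma_\nu(\epsilon_g))^{1/2}$ this yields exactly $\tfrac{\eta}{6}\left(\tfrac{2(1-\eta)\theta}{3}\right)^2\epsilon_g^{3/2}/\gamma_\nu(\epsilon_g)^{1/2}$, the second entry of $c_{\mathrm{sol}}$. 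The $U_g$-form is used only to obtain the explicit statement of part (i). Your guess about the two entries of $c_{\mathrm{sol}}$ is therefore also wrong: they correspond to $\alpha_k=1$ versus $\alpha_k<1$, and the $\alpha_k=1$ sub-case does not come from Lemma~\ref{lem:next-FOSP-or-large-step} at all. There the paper bounds $\epsilon_g<\|\nabla f(x^k+d^k)\|$ from above by a quadratic in $\|d^k\|$ using \eqref{i-HL-1}, \eqref{SOL-ppty-3} and \eqref{L1-bd-gma}, then solves the quadratic to get $\|d^k\|\ge\tfrac{2}{4+\zeta+\sqrt{(4+\zeta)^2+1}}(\epsilon_g/\gamma_\nu(\epsilon_g))^{1/2}$, which is the source of the first entry; naively inserting $6\|d^k\|\ge(\epsilon_g/\gamma_\nu)^{1/2}$ in that sub-case would give only the constant $\eta/36$, which is strictly smaller than $c_{\mathrm{sol}}$ and does not prove \eqref{ineq:descent-sol-pd} as stated.

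A secondary issue in your part (i) sketch: you propose to verify the Armijo condition by applying \eqref{desc-ineq} at $x^k+\theta^j d^k$, but that point is not guaranteed to lie in $\Omega$ when $f(x^k+\theta^j d^k)>f(x^k)$, so \eqref{desc-ineq} cannot be used directly. The paper handles the violating $j$ with a two-case argument: if $f(x^k+\theta^j d^k)>f(x^k)$ it locates a local minimizer $\alpha_*\in(0,\theta^j)$ of $\varphi(\alpha)=f(x^k+\alpha d^k)$ satisfying $\varphi(\alpha_*)<\varphi(0)$ (so the point does lie in the level set) and applies \eqref{apx-nxt-grad} there; only when $f(x^k+\theta^j d^k)\le f(x^k)$ does it invoke \eqref{desc-ineq} directly. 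Both cases lead to the same lower bound \eqref{claim:lbd-stepsize} on $\theta^{(1+\nu)j}$, after which the $(1+\nu)$th root, \eqref{rela:Hnu-gammanu-1}, and the $\|d^k\|$ lower bound give the step estimate.
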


\begin{proof}
Observe that $f$ is descent along the iterates of Algorithm~\ref{alg:NCG-pd}, which implies that $f(x^k) \le f(x^0)$ and hence $\|\nabla f(x^k)\|\le U_g$ due to \eqref{lwbd-Hgupbd}. In addition, since $d^k$ results from Algorithm~\ref{alg:capped-CG} with d$\_$type=SOL, one can see that $\|\nabla f(x^k)\|>\epsilon_g$ and \eqref{SOL-ppty-1}-\eqref{SOL-ppty-3} hold for $d^k$. Moreover, by $\|\nabla f(x^k)\|>\epsilon_g$ and \eqref{SOL-ppty-3}, one can conclude that $d^k\neq 0$.
	
We first prove statement (i). If $\alpha_k=1$, then $\alpha_k\ge \min\{1,(1-\eta)\theta(\epsilon_g/U_g)^{1/2}/2\}$ clearly holds. We now suppose that $\alpha_k < 1$. 
Claim that for all $j\ge 0$ that violate \eqref{ls-sol-stepsize}, it holds that
\begin{equation}\label{claim:lbd-stepsize}
\theta^{(1+\nu)j} \ge (1 - \eta)(\gamma_\nu(\epsilon_g)\epsilon_g)^{1/2}/(H_\nu\|d^k\|^{\nu}).
\end{equation}
Indeed, suppose that \eqref{ls-sol-stepsize} is violated by some $j\ge0$. We next prove that \eqref{claim:lbd-stepsize} holds for such $j$ by considering two separate cases below.

Case 1) $f(x^k+\theta^j d^k)>f(x^k)$. Denote $\varphi(\alpha)=f(x^k+\alpha d^k)$. Then $\varphi(\theta^j)>\varphi(0)$. Also, since $d^k\neq 0$, one has
\[
\varphi^\prime(0) = \nabla f(x^k)^Td^k \overset{\eqref{SOL-ppty-2}}{=} - (d^k)^T(\nabla^2 f(x^k)+2(\gamma_\nu(\epsilon_g)\epsilon_g)^{1/2}I)d^k\overset{\eqref{SOL-ppty-1}}{\le} -(\gamma_\nu(\epsilon_g)\epsilon_g)^{1/2}\|d^k\|^2<0.
\]
Using these, we can observe that there exists a local minimizer $\alpha_*\in(0,\theta^j)$ of $\varphi$ such that $\varphi^\prime(\alpha_*)=\nabla f(x^k+\alpha_*d^k)^Td^k=0$ and $\varphi(\alpha_*)<\varphi(0)$. The latter relation together with $f(x^k) \le f(x^0)$ implies that $f(x^k+\alpha_*d^k)<f(x^k)\le f(x^0)$. Hence, \eqref{apx-nxt-grad} holds for $x=x^k$ and $y=x^k+\alpha_*d^k$. Using this, \eqref{SOL-ppty-1}, \eqref{SOL-ppty-2},  $0<\alpha_*<\theta^j\le 1$, and $\nabla f(x^k+\alpha_*d^k)^Td^k=0$, we deduce that
\begin{equation*}
\begin{array}{l}
\frac{\alpha_*^{1+\nu}H_\nu}{1+\nu}\|d^k\|^{2+\nu}\overset{\eqref{apx-nxt-grad}}{\ge}\|d^k\|\|\nabla f(x^{k}+\alpha_*d^k)-\nabla f(x^k)-\alpha_*\nabla^2 f(x^k)d^k\|\\[6pt]
\ge(d^k)^T(\nabla f(x^k+\alpha_*d^k)-\nabla f(x^k)-\alpha_*\nabla^2 f(x^k)d^k)=-(d^k)^T\nabla f(x^k)-\alpha_*(d^k)^T\nabla^2 f(x^k)d^k\\[1pt]
\overset{\eqref{SOL-ppty-2}}{=}(1-\alpha_*)(d^k)^T(\nabla^2 f(x^k)+2(\gamma_\nu(\epsilon_g)\epsilon_g)^{1/2}I)d^k +2 \alpha_*(\gamma_\nu(\epsilon_g)\epsilon_g)^{1/2}\|d^k\|^2\\
\overset{\eqref{SOL-ppty-1}}{\ge}(1+\alpha_*)(\gamma_\nu(\epsilon_g)\epsilon_g)^{1/2}\|d^k\|^2\ge (\gamma_\nu(\epsilon_g)\epsilon_g)^{1/2}\|d^k\|^2.
\end{array}
\end{equation*}
Recall that $\|d^k\|\neq 0$. Dividing both sides of the last inequality by $H_\nu\|d^k\|^{2+\nu}/(1+\nu)$, we obtain that 
\begin{align*}
\alpha_*^{1+\nu}\ge (1+\nu)(\gamma_\nu(\epsilon_g)\epsilon_g)^{1/2}/(H_\nu\|d^k\|^{\nu}) \ge (\gamma_\nu(\epsilon_g)\epsilon_g)^{1/2}/(H_\nu\|d^k\|^{\nu}),
\end{align*}
which together with $\eta\in(0,1)$ and $\theta^j>\alpha_*$ implies that \eqref{claim:lbd-stepsize} holds in this case.

Case 2) $f(x^k+\theta^j d^k)\le f(x^k)$. This together with $f(x^k)\le f(x^0)$ implies that \eqref{desc-ineq} holds for $x=x^k$ and $y=x^k + \theta^j d^k$. By this and the supposition that $j$ violates \eqref{ls-sol-stepsize}, we obtain that
\begin{align}
&-\eta(\gamma_\nu(\epsilon_g)\epsilon_g)^{1/2}\theta^{2j}\|d^k\|^2\le f(x^k+\theta^j d^k) - f(x^k)\nn\\
&\overset{\eqref{desc-ineq}}{\le}  \theta^j \nabla f(x^k)^Td^k + \frac{\theta^{2j}}{2}(d^k)^T\nabla^2 f(x^k) d^k + \frac{H_{\nu}\theta^{(2+\nu)j}\|d^k\|^{2+\nu}}{(1+\nu)(2+\nu)}\nn\\
& \overset{\eqref{SOL-ppty-2}}{=} -\theta^j (d^k)^T(\nabla^2 f(x^k)+2(\gamma_\nu(\epsilon_g)\epsilon_g)^{1/2}I)d^k + \frac{\theta^{2j}}{2} (d^k)^T\nabla^2 f(x^k) d^k +  \frac{H_{\nu}\theta^{(2+\nu)j}\|d^k\|^{2+\nu}}{(1+\nu)(2+\nu)}\nn\\
&  = -\theta^j \left(1- \frac{\theta^j}{2}\right)(d^k)^T(\nabla^2 f(x^k)+2(\gamma_\nu(\epsilon_g)\epsilon_g)^{1/2}I)d^k - \theta^{2j} (\gamma_\nu(\epsilon_g)\epsilon_g)^{1/2}\|d^k\|^2 + \frac{H_{\nu}\theta^{(2+\nu)j}\|d^k\|^{2+\nu}}{(1+\nu)(2+\nu)}\nn\\
& \overset{\eqref{SOL-ppty-1}}{\le} -\theta^j(\gamma_\nu(\epsilon_g)\epsilon_g)^{1/2}\|d^k\|^2 + \frac{H_{\nu}\theta^{(2+\nu)j}\|d^k\|^{2+\nu}}{(1+\nu)(2+\nu)}.\label{pd-sol-upbd}
\end{align}
Recall that $d^k\neq 0$. Dividing both sides of \eqref{pd-sol-upbd} by $H_\nu\theta^j\|d^k\|^{2+\nu}/[(1+\nu)(2+\nu)]$, and using $\theta,\eta\in(0,1)$ and $\nu\in[0,1]$, we obtain that
\begin{align*}
\theta^{(1+\nu)j} \ge (1+\nu)(2+\nu) (1 - \theta^j\eta)(\gamma_\nu(\epsilon_g)\epsilon_g)^{1/2}/(H_\nu\|d^k\|^{\nu}) \ge (1 - \eta)(\gamma_\nu(\epsilon_g)\epsilon_g)^{1/2}/(H_\nu\|d^k\|^{\nu}).
\end{align*}
Hence, \eqref{claim:lbd-stepsize} also holds in this case.

Combining the above two cases, we conclude that \eqref{claim:lbd-stepsize} holds for all $j\ge0$ violating \eqref{ls-sol-stepsize}. Letting $\gamma=\gamma_\nu(\epsilon_g)$ in Lemma~\ref{lem:tech-gammamu-eps}, we know from \eqref{rela:Hnu-gammanu-1} that $(\gamma_\nu(\epsilon_g)\epsilon_g)^{1/2}/H_\nu \ge 2^{1+\nu}(\epsilon_g/\gamma_\nu(\epsilon_g))^{\nu/2}$, which along with \eqref{claim:lbd-stepsize} implies that
\begin{align}\label{a-handy-relation-sol}
\theta^{(1+\nu)j} \ge (1-\eta) 2^{1+\nu} (\epsilon_g/\gamma_\nu(\epsilon_g))^{\nu/2}/\|d^k\|^\nu.
\end{align}
In addition, notice from Algorithm~\ref{alg:NCG-pd} that when $\alpha_k<1$, at least one of $f(x^k + d^k) \le f(x^k)$ and $\|\nabla f(x^k+d^k)\|\le\epsilon_g$ does not hold. It then follows from Lemma~\ref{lem:next-FOSP-or-large-step} that $6\|d^k\|\ge(\epsilon_g/\gamma_\nu(\epsilon_g))^{1/2}$.  Using this and taking the $\left(\frac{1}{1+\nu}\right)$th root of both sides of \eqref{a-handy-relation-sol}, we deduce that
\begin{align}
\theta^j & \ge 2(1-\eta)^{1/(1+\nu)}\left(\frac{(\epsilon_g/\gamma_\nu(\epsilon_g))^{1/4}}{\|d^k\|^{1/2}}\right)^{2\nu/(1+\nu)}\nonumber\\
& \ge 2 (1-\eta)\left(\frac{(\epsilon_g/\gamma_\nu(\epsilon_g))^{1/4}}{\|d^k\|^{1/2}}\right)^{2\nu/(1+\nu)} \left(\frac{(\epsilon_g/\gamma_\nu(\epsilon_g))^{1/4}}{\sqrt{6}\|d^k\|^{1/2}}\right)^{(1-\nu)/(1+\nu)} \ge \frac{2(1-\eta)(\epsilon_g/\gamma_\nu(\epsilon_g))^{1/4}}{3\|d^k\|^{1/2}}.\label{claim:lbd-stepsize-1}
\end{align}
By this and $\theta\in(0,1)$, one can see that all $j\ge0$ that violate \eqref{ls-sol-stepsize} must be bounded above. It then follows that the step length $\alpha_k$ associated with \eqref{ls-sol-stepsize} is well-defined. We next derive a lower bound for $\alpha_k$. Notice from the definition of $j_k$ in Algorithm~\ref{alg:NCG-pd} that $j=j_k-1$ violates \eqref{ls-sol-stepsize} and hence \eqref{claim:lbd-stepsize-1} holds for $j=j_k-1$. Then, by \eqref{claim:lbd-stepsize-1} with $j=j_k-1$ and $\alpha_k=\theta^{j_k}$, one has 
\begin{align}\label{lbd-step-size-sol}
\alpha_k = \theta^{j_k} \ge 2(1-\eta)\theta(\epsilon_g/\gamma_\nu(\epsilon_g))^{1/4}/(3\|d^k\|^{1/2}),
\end{align}
which together with \eqref{SOL-ppty-upbd} and $\|\nabla f(x^k)\|\le U_g$ implies
\[
\alpha_k \ge 2(1-\eta)\theta \epsilon_g^{1/2}/(3\sqrt{1.1}\|\nabla f(x^k)\|^{1/2}) \ge (1-\eta)\theta(\epsilon_g/U_g)^{1/2}/2,
\]
and hence statement (i) holds.

We next prove statement (ii). To prove this, it suffices to show that \eqref{ineq:descent-sol-pd} holds under the condition $\|\nabla f(x^{k+1})\|>\epsilon_g$. To this end, we assume that $\|\nabla f(x^{k+1})\|>\epsilon_g$ holds, and prove \eqref{ineq:descent-sol-pd} by considering two separate cases below.

Case 1) $\alpha_k=1$. By this and the assumption $\|\nabla f(x^{k+1})\|>\epsilon_g$, one can observe from Algorithm~\ref{alg:NCG-pd} that \eqref{ls-sol-stepsize} holds for $j=0$. It then follows that $f(x^k+d^k)\le f(x^k)\le f(x^0)$, which implies that \eqref{i-HL-1} holds for $x=x^k$ and $y=x^k+d^k$. By this, $\alpha_k=1$, $\|\nabla f(x^{k+1})\|>\epsilon_g$, \eqref{i-HL-1}, and \eqref{SOL-ppty-3}, one has
\begin{align*}
\epsilon_g <\|\nabla f(x^{k+1})\| = \|\nabla f(x^k + d^k)\|&\le \|\nabla f(x^k+d^k)-\nabla f(x^k)-\nabla^2 f(x^k)d^k\|\\
&\qquad +\|(\nabla^2 f(x^k)+2(\gamma_\nu(\epsilon_g)\epsilon_g)^{1/2}I)d^k+\nabla f(x^k)\|+2(\gamma_\nu(\epsilon_g)\epsilon_g)^{1/2}\|d^k\| \\
&\overset{\eqref{i-HL-1}\eqref{SOL-ppty-3}}\le \frac{L(\epsilon_g/2)}{2}\|d^k\|^2 + \frac{\epsilon_g}{2} + \frac{4+\zeta}{2}(\gamma_\nu(\epsilon_g)\epsilon_g)^{1/2}\|d^k\|.
\end{align*}
 Solving the above inequality for $\|d^k\|$ and using $\|d^k\|>0$, we obtain that
\begin{align*}
&\|d^k\| \ge \frac{-(4+\zeta)(\gamma_\nu(\epsilon_g)\epsilon_g)^{1/2} + \sqrt{(4+\zeta)^2\gamma_\nu(\epsilon_g)\epsilon_g + 4 L(\epsilon_g/2)\epsilon_g}}{2L(\epsilon_g/2)}\\
&= \frac{2\epsilon_g}{(4+\zeta)(\gamma_\nu(\epsilon_g)\epsilon_g)^{1/2} + \sqrt{(4+\zeta)^2\gamma_\nu(\epsilon_g)\epsilon_g + 4L(\epsilon_g/2)\epsilon_g}} \ge \frac{2}{4+\zeta + \sqrt{(4+\zeta)^2 + 1}}\left(\frac{\epsilon_g}{\gamma_\nu(\epsilon_g)}\right)^{1/2},
\end{align*}
where the last inequality is due to $L(\epsilon_g/2)\le\gamma_\nu(\epsilon_g)/4$ (see \eqref{L1-bd-gma} with $a=2$). By the above inequality, $\alpha_k=1$, and \eqref{ls-sol-stepsize}, one has 
\[
f(x^k) - f(x^{k+1}) \ge \eta (\gamma_\nu(\epsilon_g)\epsilon_g)^{1/2}\|d^k\|^2\ge \eta \left(\frac{2}{4+\zeta + \sqrt{(4+\zeta)^2 + 1}}\right)^2 \frac{\epsilon_g^{3/2}}{\gamma_\nu(\epsilon_g)^{1/2}},
\]
and hence statement (i) holds in this case.
    
Case 2) $\alpha_k<1$.  By this,  one can observe from Algorithm~\ref{alg:NCG-pd} that at least one of $\|\nabla f(x^k + d^k)\|\le\epsilon_g$ and $f(x^k+d^k)\le f(x^k)$ does not hold. It then follows from Lemma~\ref{lem:next-FOSP-or-large-step} that $6\|d^k\| \ge (\epsilon_g/\gamma_\nu(\epsilon_g))^{1/2}$. Using this, \eqref{ls-sol-stepsize}, and \eqref{lbd-step-size-sol}, we can deduce that
\begin{align*}
f(x^k) - f(x^{k+1}) \overset{\eqref{ls-sol-stepsize}}{>} \eta (\gamma_\nu(\epsilon_g)\epsilon_g)^{1/2} \theta^{2j_k}\|d^k\|^2 \overset{\eqref{lbd-step-size-sol}}{\ge} \eta\left(\frac{2(1-\eta)\theta}{3}\right)^2 \epsilon_g\|d^k\| \ge \frac{\eta}{6} \left(\frac{2(1-\eta)\theta}{3}\right)^2 \frac{\epsilon_g^{3/2}}{\gamma_\nu(\epsilon_g)^{1/2}},
\end{align*}
where the last inequality is due to $6\|d^k\| \ge (\epsilon_g/\gamma_\nu(\epsilon_g))^{1/2}$. By the above inequality and the definition of $c_{\mathrm{sol}}$ in \eqref{csol-cnc}, one can see that \eqref{ineq:descent-sol-pd} also holds in this case. 
\end{proof}

The following lemma shows when the search direction $d^k$ in Algorithm~\ref{alg:NCG-pd} is of type `NC', the line search step results in a sufficient reduction on $f$ as well.

\begin{lemma}\label{lem:nc-1}
Suppose that Assumption~\ref{asp:NCG-cmplxity} holds and the direction $d^k$ results from Algorithm~\ref{alg:capped-CG} with d$\_$type=NC at some iteration $k$ of Algorithm~\ref{alg:NCG-pd}. Let $\gamma_\nu(\epsilon_g)$ and $c_{\mathrm{nc}}$ be defined in \eqref{gma-eps} and \eqref{csol-cnc}, respectively. Then the following statements hold.
\begin{enumerate}[{\rm (i)}]
\item The step length $\alpha_k$ is well-defined, and $\alpha_k\ge\theta\min\{1,1/\gamma_\nu(\epsilon_g)\}$.
\item The next iterate $x^{k+1}=x^k + \alpha_k d^k$ satisfies $f(x^k) - f(x^{k+1}) \ge c_{\mathrm{nc}}\epsilon_g^{3/2}/\gamma_\nu(\epsilon_g)^{1/2}$.
\end{enumerate}
\end{lemma}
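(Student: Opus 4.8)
The plan is to follow the template of the proof of Lemma~\ref{lem:sol-1}, replacing the ``SOL'' properties by the negative-curvature properties of $d^k$ recorded in Lemma~\ref{lem:SOL-NC-ppty}(ii). Writing $\gamma=\gamma_\nu(\epsilon_g)$ for brevity, I would first collect the facts to be used repeatedly (recalling that $f$ is nonincreasing along the iterates, so $f(x^k)\le f(x^0)$ and $x^k\in\Omega$): $d^k\neq 0$; $\nabla f(x^k)^Td^k\le 0$; $(d^k)^T\nabla^2 f(x^k)d^k=-\min\{1,\gamma\}\|d^k\|^3$; and, dividing the inequality $-\min\{1,\gamma\}\|d^k\|^3\le-(\gamma\epsilon_g)^{1/2}\|d^k\|^2$ of Lemma~\ref{lem:SOL-NC-ppty}(ii) by $\|d^k\|^2$, the lower bound $\|d^k\|\ge(\gamma\epsilon_g)^{1/2}/\min\{1,\gamma\}$. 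The only further ingredients are the descent inequality \eqref{desc-ineq}, the gradient approximation \eqref{apx-nxt-grad}, and the equivalence \eqref{rela:Hnu-gammanu-2} of Lemma~\ref{lem:tech-gammamu-eps} applied with $\gamma=\gamma_\nu(\epsilon_g)$, namely $(\gamma\epsilon_g)^{(1-\nu)/2}/H_\nu\ge 2^{1+\nu}/\gamma^\nu$.

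For statement (i) with $\nu>0$, I would show that any nonnegative integer $j$ violating \eqref{ls-nc-stepsize} satisfies $\theta^{\nu j}\ge\min\{1,1/\gamma\}^\nu$, equivalently $\theta^j\ge\min\{1,1/\gamma\}$. As in Lemma~\ref{lem:sol-1}, this is proved by splitting on the sign of $f(x^k+\theta^j d^k)-f(x^k)$. If it is positive, the map $\alpha\mapsto f(x^k+\alpha d^k)$ is initially nonincreasing (its derivative at $0$ is $\nabla f(x^k)^Td^k\le0$, its second derivative there is $(d^k)^T\nabla^2 f(x^k)d^k<0$) and ends above $f(x^k)$, so it has an interior local minimizer $\alpha_*\in(0,\theta^j)$ with $\nabla f(x^k+\alpha_* d^k)^Td^k=0$ and $f(x^k+\alpha_* d^k)<f(x^k)\le f(x^0)$, hence $x^k+\alpha_* d^k\in\Omega$; taking the inner product of \eqref{apx-nxt-grad} at $y=x^k+\alpha_* d^k$ with $d^k$ and inserting $\nabla f(x^k)^Td^k\le0$ and the curvature identity yields $\alpha_*^{\nu}\ge(1+\nu)\min\{1,\gamma\}\|d^k\|^{1-\nu}/H_\nu$, hence the same lower bound on $\theta^{\nu j}$. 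If instead $f(x^k+\theta^j d^k)\le f(x^k)$, then $x^k+\theta^j d^k\in\scL_f(x^0)\subseteq\Omega$, so \eqref{desc-ineq} applies at $y=x^k+\theta^j d^k$; dropping the nonpositive first-order term, substituting the curvature identity, and using that $j$ violates \eqref{ls-nc-stepsize} gives $\theta^{\nu j}>(1+\nu)(2+\nu)(1/2-\eta/4)\min\{1,\gamma\}\|d^k\|^{1-\nu}/H_\nu$. In both branches I would then substitute $\|d^k\|\ge(\gamma\epsilon_g)^{1/2}/\min\{1,\gamma\}$ and \eqref{rela:Hnu-gammanu-2}: the $H_\nu$ and $\epsilon_g$ dependence cancels, leaving $\theta^{\nu j}\ge c\,2^{1+\nu}\min\{1,1/\gamma\}^\nu$ with $c\,2^{1+\nu}\ge1$ (since $\eta,\theta\in(0,1)$ and $\nu\in[0,1]$), so $\theta^{\nu j}\ge\min\{1,1/\gamma\}^\nu$. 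Applying this with $j=j_k-1$, which violates \eqref{ls-nc-stepsize} by minimality of $j_k$ (the case $j_k=0$ being trivial), gives $\alpha_k=\theta\,\theta^{j_k-1}\ge\theta\min\{1,1/\gamma\}$; the same necessary condition bounds the violating $j$'s from above, so $\alpha_k$ is well-defined. When $\nu=0$ this argument degenerates, so I would instead show directly that $j=0$ cannot violate \eqref{ls-nc-stepsize}: both branches with $\nu=0$, $j=0$ force $\min\{1,\gamma\}\|d^k\|<2H_0/(2-\eta)$, which contradicts $\min\{1,\gamma\}\|d^k\|\ge(\gamma_0(\epsilon_g)\epsilon_g)^{1/2}=2H_0$ together with $\eta<1$; thus $j_k=0$ and $\alpha_k=1\ge\theta\min\{1,1/\gamma\}$.

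For statement (ii), the line search yields $f(x^k)-f(x^{k+1})\ge\eta\min\{1,\gamma\}\alpha_k^2\|d^k\|^3/4$. Substituting $\alpha_k\ge\theta\min\{1,1/\gamma\}$ and $\|d^k\|\ge(\gamma\epsilon_g)^{1/2}/\min\{1,\gamma\}$ and using the identity $\min\{1,1/\gamma\}/\min\{1,\gamma\}=1/\gamma$, the factor $\min\{1,\gamma\}\min\{1,1/\gamma\}^2/\min\{1,\gamma\}^3$ collapses to $\gamma^{-2}$, so $f(x^k)-f(x^{k+1})\ge(\eta\theta^2/4)(\gamma\epsilon_g)^{3/2}\gamma^{-2}=c_{\mathrm{nc}}\,\epsilon_g^{3/2}/\gamma_\nu(\epsilon_g)^{1/2}$, which is the assertion.

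The main obstacle I anticipate is the bookkeeping of the scaling factors: keeping $\min\{1,\gamma\}$ (from the curvature identity and the line-search test) and $\max\{1,1/\gamma\}=1/\min\{1,\gamma\}$ (from the definition of $d^k$) aligned so that all absolute constants cancel uniformly whether $\gamma\ge1$ or $\gamma<1$, and handling the degenerate case $\nu=0$ separately since there one cannot take $\nu$-th roots and must instead derive a contradiction from the lower bound on $\|d^k\|$. The two-case split that keeps the line-search iterates inside $\Omega$ is essentially the one used in Lemma~\ref{lem:sol-1}, so it transfers with only cosmetic changes.
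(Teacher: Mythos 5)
Your proposal is correct, and everything except one branch of the argument coincides with the paper's own proof. The genuine variation is in Case~1 (the case $f(x^k+\theta^j d^k)>f(x^k)$). The paper extracts the lower bound on $\alpha_*$ by invoking the \emph{second-order} optimality condition $\varphi''(\alpha_*)\ge 0$ at the interior local minimizer and pairing it with the Hessian H\"older bound \eqref{F-Hess-Lip}, which gives $H_\nu\alpha_*^\nu\|d^k\|^{2+\nu}\ge -(d^k)^T\nabla^2 f(x^k)d^k=\min\{1,\gamma\}\|d^k\|^3$ and hence $\alpha_*^\nu\ge\min\{1,\gamma\}\|d^k\|^{1-\nu}/H_\nu$. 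You instead reuse the \emph{first-order} device from Lemma~\ref{lem:sol-1}: $\varphi'(\alpha_*)=0$ together with the gradient-error bound \eqref{apx-nxt-grad}, dropping the nonpositive $\nabla f(x^k)^Td^k$ term and inserting the curvature identity; this produces the slightly stronger $\alpha_*^\nu\ge(1+\nu)\min\{1,\gamma\}\|d^k\|^{1-\nu}/H_\nu$. Both routes feed into \eqref{rela:Hnu-gammanu-2} and yield $\theta^{\nu j}\ge\min\{1,1/\gamma\}^\nu$, so the difference is cosmetic. The paper's choice arguably unifies the NC and MEO line-search arguments (Lemma~\ref{lem:meo-dec} also uses the second-order condition), whereas yours unifies the SOL and NC arguments by reusing the \eqref{apx-nxt-grad} machinery; either is a valid design. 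The rest of your plan --- the $\nu=0$ contradiction from $\min\{1,\gamma\}\|d^k\|\ge 2H_0$, the treatment of Case~2 via \eqref{desc-ineq}, and the cancellation $\min\{1,\gamma\}\min\{1,1/\gamma\}^2/\min\{1,\gamma\}^3=1/\gamma^2$ in part~(ii) --- matches the paper's proof step for step.
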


\begin{proof}
Observe that $f$ is descent along the iterates of Algorithm~\ref{alg:NCG-pd} and hence $f(x^k) \le f(x^0)$. Since $d^k$ results from Algorithm~\ref{alg:capped-CG} with d$\_$type=NC, one can see from Lemma~\ref{lem:SOL-NC-ppty}(ii) that
\begin{equation}\label{12-nc}
\nabla f(x^k)^Td^k\le 0,\quad (d^k)^T\nabla^2 f(x^k) d^k/\|d^k\|^2 = -\min\{1,\gamma_\nu(\epsilon_g)\}\|d^k\| \le-(\gamma_{\nu}(\epsilon_g)\epsilon_g)^{1/2}<0.
\end{equation}

We first prove statement (i). If \eqref{ls-nc-stepsize} holds for $j=0$, then $\alpha_k=1$, which together with $\theta\in(0,1)$ implies that $\alpha_k\ge\theta\min\{1,1/\gamma_\nu(\epsilon_g)\}$ holds. We now suppose that \eqref{ls-nc-stepsize} fails for $j=0$. Claim that for all $j\ge 0$ that violate \eqref{ls-nc-stepsize}, it holds that
\begin{align}\label{lwbd-j-violate-ls-nc}
\theta^{\nu j} > (1-\eta/2)\min\{1,\gamma_\nu(\epsilon_g)\}^\nu(\gamma_\nu(\epsilon_g)\epsilon_g)^{(1-\nu)/2}/H_\nu.
\end{align}
Indeed, suppose that \eqref{ls-nc-stepsize} is violated by some $j\ge0$. We now show that \eqref{lwbd-j-violate-ls-nc} holds for such $j$ by considering two separate cases below.

Case 1) $f(x^k + \theta^jd^k) > f(x^k)$. Let $\varphi(\alpha) = f(x^k + \alpha d^k)$. Then $\varphi(\theta^j) > \varphi(0)$. Also, by \eqref{12-nc}, one has 
\begin{align*}
\varphi^\prime(0) = \nabla f(x^k)^T d^k\le 0,\quad \varphi^{\prime\prime}(0) = (d^k)^T\nabla^2 f(x^k) d^k < 0.
\end{align*}
By these and $\varphi(\theta^j)>\varphi(0)$, it is not hard to observe that there exists a local minimizer $\alpha_*\in(0,\theta^j)$ of $\varphi$ such that $\varphi(\alpha_*)<\varphi(0)$, namely, $f(x^k+\alpha_*d^k)< f(x^k)$. Further, by the second-order optimality condition of $\varphi$ at $\alpha_*$, one has $\varphi^{\prime\prime}(\alpha_*)=(d^k)^Tf(x^k+\alpha_*d^k)d^k\ge0$. Since $f(x^k+\alpha_*d^k)< f(x^k)\le f(x^0)$, it follows that \eqref{F-Hess-Lip} holds for $x=x^k$ and $y=x^k+\alpha_*d^k$. Using this, the second relation in \eqref{12-nc}, and $(d^k)^T\nabla^2 f(x^k+\alpha_*d^k)d^k\ge0$, we obtain that 
\begin{align}
H_\nu \alpha_*^\nu\|d^k\|^{2+\nu}&\ge \|d^k\|^2\|\nabla^2 f(x^k+\alpha_*d^k)-\nabla^2 f(x^k)\|\ge (d^k)^T(\nabla^2 f(x^k+\alpha_*d^k)-\nabla^2 f(x^k))d^k\nonumber\\
&\ge - (d^k)^T\nabla^2 f(x^k) d^k = \min\{1,\gamma_\nu(\epsilon_g)\}\|d^k\|^3.\label{neig-nc}
\end{align}
Recall from \eqref{12-nc} that $\|d^k\|\ge\max\{1,1/\gamma_\nu(\epsilon_g)\}(\gamma_{\nu}(\epsilon_g)\epsilon_g)^{1/2}>0$. Using this, $\theta^j>\alpha_*$, and \eqref{neig-nc}, we deduce that
\begin{align*}
\theta^{\nu j} \ge  \alpha_*^\nu \overset{\eqref{neig-nc}}{\ge} \min\{1,\gamma_\nu(\epsilon_g)\} \|d^k\|^{1-\nu}/H_\nu\ge \min\{1,\gamma_\nu(\epsilon_g)\}^\nu (\gamma_{\nu}(\epsilon_g)\epsilon_g)^{(1-\nu)/2}/H_\nu,    
\end{align*}
which together with $\eta\in(0,1)$ implies that \eqref{lwbd-j-violate-ls-nc} holds in this case.

Case 2) $f(x^k + \theta^j d^k)\le f(x^k)$. This and $f(x^k) \le f(x^0)$ imply that $f(x^k + \theta^j d^k)\le f(x^k)  \le f(x^0)$. It then follows that  \eqref{desc-ineq} holds for $x=x^k$ and $y=x^k + \theta^j d^k$. By this, \eqref{12-nc}, and the supposition that $j$ violates \eqref{ls-nc-stepsize}, one has
\begin{align*}
-\frac{\eta}{4}\min\{1,\gamma_\nu(\epsilon_g)\}\theta^{2j}\|d^k\|^3& < f(x^k + \theta^j d^k) - f(x^k)\\
&\overset{\eqref{desc-ineq}}{\le} \theta^j \nabla f(x^k)^T d^k + \frac{\theta^{2j}}{2} (d^k)^T\nabla^2 f(x^k) d^k + \frac{H_\nu \theta^{(2+\nu)j}}{(1+\nu)(2+\nu)} \|d^k\|^{2+\nu}\\
&\overset{\eqref{12-nc}}{\le} -\frac{1}{2}\min\{1,\gamma_\nu(\epsilon_g)\}\theta^{2j}\|d^k\|^3 + \frac{H_\nu \theta^{(2+\nu)j}}{(1+\nu)(2+\nu)} \|d^k\|^{2+\nu},
\end{align*}
which together with $\|d^k\|\ge\max\{1,1/\gamma_\nu(\epsilon_g)\}(\gamma_\nu(\epsilon_g)\epsilon_g)^{1/2}>0$, $\eta\in(0,1)$, and $\nu\in[0,1]$ implies that
\[
\theta^{\nu j} > (1+\nu)(2+\nu)(1/2-\eta/4)\min\{1,\gamma_\nu(\epsilon_g)\}\|d^k\|^{1-\nu}/H_\nu \ge (1-\eta/2)\min\{1,\gamma_\nu(\epsilon_g)\}^\nu(\gamma_\nu(\epsilon_g)\epsilon_g)^{(1-\nu)/2}/H_\nu,
\]
and hence \eqref{lwbd-j-violate-ls-nc} also holds in this case.



Combining the above two cases, we conclude that \eqref{lwbd-j-violate-ls-nc} holds for any $j\ge 0$ violating \eqref{ls-nc-stepsize}. Letting $\gamma=\gamma_\nu(\epsilon_g)$ in Lemma~\ref{lem:tech-gammamu-eps}, we obtain from \eqref{rela:Hnu-gammanu-2} that $(\gamma_\nu(\epsilon_g)\epsilon_g)^{(1-\nu)/2}/H_\nu \ge 2^{1+\nu}/\gamma_\nu(\epsilon_g)^\nu$, which together with \eqref{lwbd-j-violate-ls-nc} and $\eta\in(0,1)$ implies that for any $j\ge 0$ violating \eqref{ls-nc-stepsize}, 
\begin{align}\label{a-counter-ineq}
\theta^{\nu j} > (1-\eta/2) \min\{1,\gamma_\nu(\epsilon_g)\}^\nu 2^{1+\nu}/\gamma_\nu(\epsilon_g)^\nu > 2^\nu \min\{1,\gamma_\nu(\epsilon_g)\}^\nu/\gamma_\nu(\epsilon_g)^\nu.    
\end{align}
When $\nu=0$, one can see that \eqref{a-counter-ineq} does not hold for $j=0$, which implies that \eqref{ls-nc-stepsize} holds for $j=0$  and hence $\alpha_k=1$. When $\nu\in(0,1]$, one can see from \eqref{a-counter-ineq} that  all $j\ge 0$ violating \eqref{ls-nc-stepsize} must be bounded above, and hence $\alpha_k$ is well-defined. We next derive a lower bound for $\alpha_k$. Notice from the definition of $j_k$ that $j=j_k-1$ violates \eqref{ls-nc-stepsize}. Then, using \eqref{a-counter-ineq} with $j=j_k-1$ and $\alpha_k=\theta^{j_k}$, we see that $\alpha_k=\theta^{j_k}\ge \theta\min\{1,1/\gamma_\nu(\epsilon_g)\}$. Hence, statement (i) holds.


We next prove statement (ii). Recall from \eqref{12-nc} that $\|d^k\|\ge\max\{1,1/\gamma_\nu(\epsilon_g)\}(\gamma_\nu(\epsilon_g)\epsilon_g)^{1/2}$. It then follows from this, $\alpha_k\ge \theta\min\{1,1/\gamma_\nu(\epsilon_g)\}$, and \eqref{ls-nc-stepsize} that
\begin{align*}
f(x^k) - f(x^{k+1}) & \geq \frac{\eta}{4} \min\{1,\gamma_\nu(\epsilon_g)\}\alpha_k^2\|d^k\|^3 \\
&= \frac{\eta}{4}\theta^2 \min\{1,\gamma_\nu(\epsilon_g)\}\min\{1,1/\gamma_\nu(\epsilon_g)\}^2\max\{1,1/\gamma_\nu(\epsilon_g)\}^3 (\gamma_\nu(\epsilon_g)\epsilon_g)^{3/2}\\
&= \frac{\eta}{4}\theta^2 \min\{1,\gamma_\nu(\epsilon_g)\}^3\max\{1,1/\gamma_\nu(\epsilon_g)\}^3 \epsilon_g^{3/2}/\gamma_\nu(\epsilon_g)^{1/2} =  \frac{\eta}{4}\theta^2 \epsilon_g^{3/2}/\gamma_\nu(\epsilon_g)^{1/2}.    
\end{align*}
This together with the definition of $c_{\mathrm{nc}}$ in \eqref{csol-cnc} implies that statement (ii) holds.
\end{proof}

We are now ready to prove Theorem~\ref{thm:c-pd}.

\begin{proof}[Proof of Theorem~\ref{thm:c-pd}]	
(i) Recall from the assumption of this theorem that $\epsilon_H$ is not provided for Algorithm~\ref{alg:NCG-pd}. It then follows that Algorithm~\ref{alg:capped-CG} is called at each iteration of Algorithm~\ref{alg:NCG-pd}, except the last iteration. Suppose for contradiction that the total number of iterations of Algorithm~\ref{alg:NCG-pd} is more than $K_1$. Observe from Algorithm~\ref{alg:NCG-pd} and Lemmas~\ref{lem:sol-1}(ii) and \ref{lem:nc-1}(ii) that each call of Algorithm~\ref{alg:capped-CG} results in a reduction on $f$ at least by $\min\{c_{\mathrm{sol}},c_{\mathrm{nc}}\}\epsilon_g^{3/2}/\gamma_\nu(\epsilon_g)^{1/2}$. Using this and \eqref{lwbd-Hgupbd}, we have
\begin{equation*}
K_1\min\{c_{\mathrm{sol}},c_{\mathrm{nc}}\}\epsilon_g^{3/2}/\gamma_\nu(\epsilon_g)^{1/2}\le\sum_{k=0}^{K_1-1} (f(x^k) - f(x^{k+1}))=f(x^0) - f(x^{K_1}) \le f(x^0) - \fl,
\end{equation*} 
which contradicts the definition of $K_1$ given in \eqref{K1}. Hence, the total number of iterations of Algorithm~\ref{alg:NCG-pd} is no more than $K_1$. In addition, the relation~\eqref{K1-order} follows from \eqref{gma-eps}, \eqref{K1}, and \eqref{csol-cnc}. Since $\epsilon_H$ is not provided, one can also observe from Algorithm~\ref{alg:NCG-pd} that its output $x^k$ satisfies $\|\nabla f(x^k)\|\le\epsilon_g$ for some $0\le k\le K_1$. This completes the proof of statement (i) of Theorem~\ref{thm:c-pd}.

(ii) Notice that $f$ is descent along the iterates generated by Algorithm~\ref{alg:NCG-pd}, which implies that $f(x^k) \le f(x^0)$ for all $0\leq k \leq K_1$. Using this and \eqref{lwbd-Hgupbd}, we have that $\|\nabla^2 f(x^k)\|\le U_H$ for all $0\leq k \leq K_1$.  By Theorem~\ref{lem:capped-CG} with $(H,\varepsilon)=(\nabla^2 f(x^k),(\gamma_\nu(\epsilon_g)\epsilon_g)^{1/2})$ and the fact that $\|\nabla^2 f(x^k)\|\le U_H$ for all $k$, one can observe that the number of gradient evaluations and Hessian-vector products of $f$ required by each call of Algorithm~\ref{alg:capped-CG} with $U=0$ in Algorithm~\ref{alg:NCG-pd} is at most $\widetilde{\cO}(\min\{n,U_H^{1/2}/(\gamma_\nu(\epsilon_g)\epsilon_g)^{1/4}\})$. In view of this and statement (i), we see that statement (ii) of 
Theorem~\ref{thm:c-pd} holds .
\end{proof}

The following lemma shows that when the search direction $d^k$ in Algorithm~\ref{alg:NCG-pd} is a negative curvature direction returned from Algorithm~\ref{pro:meo}, the next iterate $x^{k+1}$ produces a sufficient reduction on $f$.

\begin{lemma}\label{lem:meo-dec}
Suppose that Assumption~\ref{asp:NCG-cmplxity} holds with $\nu\in(0,1]$, and the direction $d^k$ results from Algorithm~\ref{pro:meo} at some iteration $k$ of Algorithm~\ref{alg:NCG-pd}. Let $c_{\mathrm{meo}}$ be defined in \eqref{cmeo}. Then the following statements hold.
\begin{enumerate}[{\rm (i)}]
\item The step length $\alpha_k$ is well-defined, and $\alpha_k\ge\min\{1,\theta((1-\eta)/H_\nu)^{1/\nu}(\epsilon_H/2)^{(1-\nu)/\nu}\}$.
\item The next iterate $x^{k+1}=x^k+\alpha_k d^k$ satisfies $f(x^k) - f(x^{k+1}) \ge c_{\mathrm{meo}} \epsilon_H^{(2+\nu)/\nu}$.
\end{enumerate}
\end{lemma}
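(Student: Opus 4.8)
The plan is to mimic the structure of Lemmas~\ref{lem:sol-1} and \ref{lem:nc-1}, which deal with the `SOL' and `NC' directions: first extract the defining properties of the direction $d^k$ coming out of Algorithm~\ref{pro:meo}, then run the standard line-search analysis in two cases. When Algorithm~\ref{pro:meo} returns a sufficiently negative curvature direction $v$ with $\|v\|=1$ and $v^T\nabla^2 f(x^k)v\le-\epsilon_H/2$, the direction set in Algorithm~\ref{alg:NCG-pd} is $d^k = -\sgn(v^T\nabla f(x^k))\,|v^T\nabla^2 f(x^k)v|\,v$. Hence $\nabla f(x^k)^Td^k\le0$, $\|d^k\| = |v^T\nabla^2 f(x^k)v|\ge\epsilon_H/2$, and $(d^k)^T\nabla^2 f(x^k)d^k = \|d^k\|^2\,(v^T\nabla^2 f(x^k)v) = -\|d^k\|^3$ (using that $\sgn$ squares away and $v^T\nabla^2 f(x^k)v=-\|d^k\|<0$). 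These three facts — curvature equal to $-\|d^k\|^3$, nonpositive directional derivative, and $\|d^k\|\ge\epsilon_H/2$ — are the analogues of \eqref{12-nc} and are what drive everything.

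For statement (i), I would show the line-search inequality \eqref{ls-meo-pd}, i.e. $f(x^k+\theta^jd^k)\le f(x^k)-\eta\theta^{2j}\|d^k\|^3/2$, is satisfied once $\theta^j$ is below an explicit threshold, arguing by contradiction exactly as in Lemma~\ref{lem:nc-1}. Split into Case~1, $f(x^k+\theta^jd^k)>f(x^k)$: produce a local minimizer $\alpha_*\in(0,\theta^j)$ of $\varphi(\alpha)=f(x^k+\alpha d^k)$ with $\varphi''(\alpha_*)\ge0$, then use H\"older continuity \eqref{F-Hess-Lip} at $x^k$ and $x^k+\alpha_*d^k$ (legitimate since $f$ descends and $f(x^k+\alpha_*d^k)<f(x^k)\le f(x^0)$) together with $(d^k)^T\nabla^2 f(x^k)d^k=-\|d^k\|^3$ to get $H_\nu\alpha_*^\nu\|d^k\|^{2+\nu}\ge\|d^k\|^3$, hence $\theta^{\nu j}\ge\alpha_*^\nu\ge\|d^k\|^{1-\nu}/H_\nu\ge(\epsilon_H/2)^{1-\nu}/H_\nu$; and Case~2, $f(x^k+\theta^jd^k)\le f(x^k)$: apply the descent inequality \eqref{desc-ineq} at $x^k$ and $x^k+\theta^jd^k$, absorb the first-order and curvature terms using $\nabla f(x^k)^Td^k\le0$ and $(d^k)^T\nabla^2 f(x^k)d^k=-\|d^k\|^3$, and solve for $\theta^{\nu j}$ to get a bound of the same order. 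Combining, any $j$ violating \eqref{ls-meo-pd} satisfies $\theta^{\nu j}>(1-\eta)(\epsilon_H/2)^{1-\nu}/H_\nu$ (after discarding constants $\ge1$ like $(1+\nu)(2+\nu)$), so $j_k$ is finite; taking $j=j_k-1$ and raising to the $1/\nu$ power gives $\alpha_k=\theta^{j_k}\ge\theta((1-\eta)/H_\nu)^{1/\nu}(\epsilon_H/2)^{(1-\nu)/\nu}$, which combined with the trivial bound $\alpha_k\le1$ yields statement (i).

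For statement (ii), plug the lower bound on $\alpha_k$ from (i) and $\|d^k\|\ge\epsilon_H/2$ into the accepted line-search inequality \eqref{ls-meo-pd}: $f(x^k)-f(x^{k+1})\ge(\eta/2)\alpha_k^2\|d^k\|^3\ge(\eta/2)\min\{1,\theta((1-\eta)/H_\nu)^{1/\nu}(\epsilon_H/2)^{(1-\nu)/\nu}\}^2(\epsilon_H/2)^3$. Pulling the factor $(1/2)^{(1-\nu)\cdot 2/\nu}$ out of the $\min$ as $(1/2)^{2(1-\nu)/\nu}$ and multiplying by the $(1/2)^3$ from $(\epsilon_H/2)^3$ gives exponent $(6(1-\nu)+6\nu\cdot?)$ — more carefully, $\alpha_k^2\|d^k\|^3$ carries $\epsilon_H^{2(1-\nu)/\nu}\cdot\epsilon_H^3=\epsilon_H^{(2-2\nu+3\nu)/\nu}=\epsilon_H^{(2+\nu)/\nu}$, matching the claimed exponent, and the constant collapses to exactly $c_{\mathrm{meo}}$ as defined in \eqref{cmeo} once the $(1/2)$ powers are collected into $(1/2)^{(2+\nu)/\nu}$. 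The main obstacle, as in the companion lemmas, is bookkeeping: correctly tracking which inequalities require $f(x^k+\cdot)\le f(x^0)$ (so that \eqref{F-Hess-Lip} and \eqref{desc-ineq} apply on $\Omega$), and making sure the constant after collecting all the $(1/2)$-powers and the $\min\{1,\cdot\}$ matches $c_{\mathrm{meo}}$ verbatim; the analytic content is routine given the curvature identity $(d^k)^T\nabla^2 f(x^k)d^k=-\|d^k\|^3$ and the lower bound $\|d^k\|\ge\epsilon_H/2$.
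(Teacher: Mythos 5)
Your proposal is correct and follows essentially the same route as the paper's proof: extract the curvature identity $(d^k)^T\nabla^2 f(x^k)d^k=-\|d^k\|^3$ together with $\nabla f(x^k)^Td^k\le0$ and $\|d^k\|\ge\epsilon_H/2$, then run the two-case line-search argument (local minimizer via $\varphi''(\alpha_*)\ge0$ plus H\"older continuity when $f$ increases, descent inequality \eqref{desc-ineq} when it does not) to get $\theta^{\nu j}\ge(1-\eta)(\epsilon_H/2)^{1-\nu}/H_\nu$ for any $j$ violating \eqref{ls-meo-pd}, and close statement (ii) by plugging the step-size bound into \eqref{ls-meo-pd} and collecting the $(1/2)$-powers into $(1/2)^{(2+\nu)/\nu}$. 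The bookkeeping you flagged as the main obstacle is handled exactly as the paper does, and the constant collapses to $c_{\mathrm{meo}}$ as claimed.
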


\begin{proof}
Observe that $f$ is descent along the iterates generated by Algorithm~\ref{alg:NCG-pd}, which implies that $f(x^k)\le f(x^0)$. Since $d^k$ results from Algorithm~\ref{pro:meo}, it follows from Algorithm~\ref{alg:NCG-pd} that $d^k$ is given in \eqref{dk-nc-meo2} with $v$ being the vector returned from Algorithm~\ref{pro:meo} with $H=\nabla^2 f(x^k)$ and $\varepsilon=\epsilon_H$ that satisfies $\|v\|=1$ and $v^T\nabla^2 f(x^k)v\leq -\epsilon_H/2$. By these and \eqref{dk-nc-meo2}, one can see that
\begin{equation}\label{ppty-nc-meo}
\nabla f(x^k)^Td^k\le 0,\quad (d^k)^T\nabla^2 f(x^k)d^k/\|d^k\|^2 = - \|d^k\| =v^T\nabla^2 f(x^k)v\le -\epsilon_H/2<0.
\end{equation}

We first prove statement (i). If \eqref{ls-meo-pd} holds for $j=0$, then we have $\alpha_k=1$, which clearly implies that $\alpha_k\ge\min\{1,\theta[(1-\eta)/H_\nu]^{1/\nu}(\epsilon_H/2)^{(1-\nu)/\nu}\}$. We now suppose that \eqref{ls-meo-pd} fails for $j=0$. Claim that for all $j\ge0$ that violate \eqref{ls-meo-pd}, it holds that
\begin{align}\label{claim:meo-vio-lbd}
\theta^j \ge ((1-\eta)/H_\nu)^{1/{\nu}}(\epsilon_H/2)^{(1-\nu)/{\nu}}.    
\end{align}
Indeed, suppose that \eqref{ls-meo-pd} is violated by some $j\ge 0$. We now show that \eqref{claim:meo-vio-lbd} holds for such $j$ by considering two separate cases below.

Case 1) $f(x^k + \theta^j d^k)> f(x^k)$. Let $\varphi(\alpha)=f(x^k + \alpha d^k)$. Then $\varphi(\theta^j)>\varphi(0)$. Also, by \eqref{ppty-nc-meo}, one has 
\[
\varphi^\prime(0) = \nabla f(x^k)^T d^k\le0,\quad \varphi^{\prime\prime}(0) = (d^k)^T\nabla^2 f(x^k) d^k<0.
\]
By these and $\varphi(\theta^j)>\varphi(0)$, it is not hard to observe that there exists a local minimizer $\alpha_*\in(0,\theta^j)$ of $\varphi$ such that $\varphi(\alpha_*)<\varphi(0)$, namely, $f(x^k+\theta^j d^k)< f(x^k)$. By the second-order optimality condition of $\varphi$, one has $\varphi^{\prime\prime}(\alpha_*)=(d^k)^T\nabla^2 f(x^k + \alpha_* d^k) d^k\ge0$. Since $f(x^k + \alpha_* d^k)\le f(x^k) \le f(x^0)$, it follows that \eqref{F-Hess-Lip} holds for $x=x^k$ and $y=x^k+\alpha_*d^k$. Using this, the second relation in \eqref{ppty-nc-meo}, and $(d^k)^T\nabla^2 f(x^k + \alpha_* d^k) d^k\ge0$, we obtain that
\begin{align*}
H_\nu \alpha_*^\nu\|d^k\|^{2+\nu} \ge &\ \|d^k\|^2\|\nabla^2 f(x^k+\alpha_* d^k)-\nabla^2 f(x^k)\|\ge (d^k)^T(\nabla^2 f(x^k+\alpha_* d^k)-\nabla^2 f(x^k))d^k\\
\ge&\ -(d^k)^T\nabla^2 f(x^k) d^k = \|d^k\|^3.
\end{align*}
Recall from \eqref{ppty-nc-meo} that $d^k\neq 0$. Dividing both sides of this inequality by $H_\nu \|d^k\|^{2+\nu} $ yields $\alpha_*^\nu \ge \|d^k\|^{1-\nu}/H_\nu$, which along with $\theta^j> \alpha_*$, $\nu\in(0,1]$, and $\|d^k\|\ge \epsilon_H/2$ implies that $\theta^j \ge (\epsilon_H/2)^{(1-\nu)/\nu}/H_\nu^{1/\nu}$ and  hence \eqref{claim:meo-vio-lbd} holds in this case.

Case 2) $f(x^k + \theta^j d^k)\le f(x^k)$. This and $f(x^k) \le f(x^0)$ imply that $f(x^k + \theta^j d^k)\le f(x^k)  \le f(x^0)$. It then follows that \eqref{desc-ineq} holds for $x=x^k$ and $y=x^k+\theta^j d^k$. By this and the supposition that $j$ violates \eqref{ls-meo-pd}, one has 
\begin{align*}
-\frac{\eta}{2}\theta^{2j}\|d^k\|^3 \le &\ f(x^k+\theta^j d^k) - f(x^k) \overset{\eqref{desc-ineq}}{\le} \theta^j\nabla f(x^k)^Td^k + \frac{\theta^{2j}}{2}(d^k)^T\nabla^2 f(x^k) d^k + \frac{H_\nu\theta^{(2+\nu)j}}{(1+\nu)(2+\nu)}\|d^k\|^{2+\nu}\\
\overset{\eqref{ppty-nc-meo}}{\le}&\ -\frac{\theta^{2j}}{2}\|d^k\|^3 + \frac{H_\nu\theta^{(2+\nu)j}}{2}\|d^k\|^{2+\nu},
\end{align*}
where the last inequality is due to $\nu\in(0,1]$ and \eqref{ppty-nc-meo}. By this and $d^k\neq 0$, one has $\theta^{\nu j}\ge (1-\eta) \|d^k\|^{1-\nu}/H_\nu$. This together with $\nu\neq0$ implies that \eqref{claim:meo-vio-lbd} holds in this case as well.

Combining the above two cases, we conclude that \eqref{claim:meo-vio-lbd} holds for all $j\ge0$ that violate \eqref{ls-meo-pd}. By this and $\theta\in(0,1)$, one can see that all $j\ge0$ violating \eqref{ls-meo-pd} must be bounded above. It then follows that the step length $\alpha_k$ associated with \eqref{ls-meo-pd} is well-defined. We next derive a lower bound for $\alpha_k$. Notice from the definition of $j_k$ in Algorithm~\ref{alg:NCG-pd} that $j=j_k-1$ violates \eqref{ls-meo-pd} and hence \eqref{claim:meo-vio-lbd} holds for $j=j_k-1$. Then, by \eqref{claim:meo-vio-lbd} with $j=j_k-1$ and $\alpha_k=\theta^{j_k}$, one has $\alpha_k=\theta^{j_k}\ge \theta[(1-\eta)/H_\nu]^{1/\nu}(\epsilon_H/2)^{(1-\nu)/\nu}$. Hence, $\alpha_k\ge\min\{1,\theta[(1-\eta)/H_\nu]^{1/\nu}(\epsilon_H/2)^{(1-\nu)/\nu}\}$ holds as desired.

We next prove statement (ii). Recall from \eqref{ppty-nc-meo} that $\|d^k\|\ge\epsilon_H/2$. In view of this, \eqref{ls-meo-pd}, and the fact that $\alpha_k\ge\min\{1,\theta[(1-\eta)/H_\nu]^{1/\nu}(\epsilon_H/2)^{(1-\nu)/\nu}\}$, we obtain that
\begin{align*}
f(x^k) - f(x^{k+1}) & > \frac{\eta}{2} \alpha_k^2\|d^k\|^3 \ge \frac{\eta}{2}\min\big\{1,\theta((1-\eta)/H_\nu)^{1/\nu}(\epsilon_H/2)^{(1-\nu)/\nu}\big\}^2(\epsilon_H/2)^3\\
&= \frac{\eta}{2}\min\big\{(\epsilon_H/2)^{-(1-\nu)/\nu},\theta((1-\eta)/H_\nu)^{1/\nu}\big\}^2 (\epsilon_H/2)^{(2+\nu)/\nu}\\
&\ge \frac{\eta}{2}\min\big\{1,\theta((1-\eta)/H_\nu)^{1/\nu}\big\}^2 (\epsilon_H/2)^{(2+\nu)/\nu},
\end{align*}
where the last inequality is due to $\epsilon_H\in(0,1)$ and $\nu\in(0,1]$. By this inequality and the definition of $c_{\mathrm{meo}}$ in \eqref{cmeo}, one can see that statement (ii) holds.
\end{proof}

We are now ready to provide a proof of Theorem~\ref{thm:c-pd-sosp}.

\begin{proof}[Proof of Theorem~\ref{thm:c-pd-sosp}]
(i) Let $K_1$ and $K_2$ be defined in \eqref{K1} and \eqref{K2}, respectively. We first claim that the total number of calls of Algorithm~\ref{pro:meo} in Algorithm~\ref{alg:NCG-pd} is at most $K_2$. Indeed, suppose for contradiction that its total number of calls is more than $K_2$. Observe from Algorithm~\ref{alg:NCG-pd} and Lemma~\ref{lem:meo-dec}(ii) that each of these calls, except the last one, results in a reduction on $f$ at least by $c_{\mathrm{meo}}\epsilon_H^{(2+\nu)/\nu}$. Since $f$ is descent along the iterates of Algorithm~\ref{alg:NCG-pd} and $f(x^k) \geq \fl$,  the total amount of reduction on $f$ resulting from the calls of Algorithm~\ref{pro:meo} in Algorithm~\ref{alg:NCG-pd} is at most $f(x^0) - \fl$. Combining these observations, one has
\begin{align*}
K_2c_{\mathrm{meo}}\epsilon_H^{(2+\nu)/\nu} \le f(x^0) - \fl,
\end{align*}
which contradicts the definition of $K_2$ in \eqref{K2}. Hence, the total number of calls of Algorithm~\ref{pro:meo} is at most $K_2$. 

We next claim that the total number of calls of Algorithm~\ref{alg:capped-CG} in Algorithm~\ref{alg:NCG-pd} is at most $K_1+K_2-1$. Indeed, suppose for contradiction that its total number of calls is more than $K_1+K_2-1$. Notice that if Algorithm~\ref{alg:capped-CG} is called at some iteration $k$ and generates $x^{k+1}$ satisfying $\|\nabla f(x^{k+1})\| \le \epsilon_g$, then Algorithm~\ref{pro:meo} must be called at the iteration $k+1$. In view of this and the fact that the total number of calls of Algorithm~\ref{pro:meo} is at most $K_2$, one can observe that the total number of such iterations is at most $K_2$. This along with the above supposition implies that the total number of iterations $k$ of Algorithm~\ref{alg:NCG-pd} at which Algorithm~\ref{alg:capped-CG} is called and generates the next iterate $x^{k+1}$ satisfying $\|\nabla f(x^{k+1})\|>\epsilon_g$ is at least $K_1$. For each of these iterations $k$, we observe from Lemmas~\ref{lem:sol-1}(ii) and \ref{lem:nc-1}(ii) that $f(x^k) - f(x^{k+1}) \ge \min\{c_{\mathrm{sol}},c_{\mathrm{nc}}\}\epsilon_g^{3/2}/\gamma_\nu(\epsilon_g)^{1/2}$.  Since $f$ is descent along the iterates of Algorithm~\ref{alg:NCG-pd} and $f(x^k) \geq \fl$,  the total amount of reduction on $f$ resulting from these iterations $k$ is at most $f(x^0) - \fl$. It then follows that
\begin{align*}
K_1\min\{c_{\mathrm{sol}},c_{\mathrm{nc}}\}\epsilon_g^{3/2}/\gamma_\nu(\epsilon_g)^{1/2} \le f(x^0) - \fl,     
\end{align*}
which contradicts the definition of $K_1$ given in \eqref{K1}. Hence, the total number of calls of Algorithm~\ref{alg:capped-CG} in Algorithm~\ref{alg:NCG-pd} is at most $K_1+K_2-1$. 

Based on the above claims and the fact that either Algorithm~\ref{alg:capped-CG} or \ref{pro:meo} is called at each iteration of Algorithm~\ref{alg:NCG-pd}, we conclude that the total number of iterations of Algorithm~\ref{alg:NCG-pd} is at most $K_1+2K_2 - 1$. In addition, relation~\eqref{K1K2-order} follows from \eqref{gma-eps}, \eqref{csol-cnc},  \eqref{K1},  \eqref{cmeo}, and \eqref{K2}. Moreover, one can observe that the output $x^k$ of Algorithm~\ref{alg:NCG-pd} satisfies $\|\nabla f(x^k)\|\le\epsilon_g$ deterministically and $\lambda_{\min}(\nabla^2 f(x^k)) \ge -\epsilon_H$ with probability at least $1-\delta$ for some $0\le k\le K_1 + 2K_2 -1$, where the latter part is due to Algorithm~\ref{pro:meo}. This completes the proof of statement (i) of Theorem~\ref{thm:c-pd-sosp}.

(ii) By Theorem~\ref{lem:capped-CG} with $(H,\varepsilon)=(\nabla^2 f(x^k),(\gamma_\nu(\epsilon_g)\epsilon_g)^{1/2})$ and the fact that $\|\nabla^2 f(x^k)\|\le U_H$, one can observe that the number of gradient evaluations and Hessian-vector products of $f$ required by each call of Algorithm~\ref{alg:capped-CG} with input $U=0$ is at most $\widetilde{\cO}(\min\{n,U_H^{1/2}/(\gamma_\nu(\epsilon_g)\epsilon_g)^{1/4}\})$. In addition, by Theorem~\ref{rand-Lanczos} with $(H,\varepsilon)=(\nabla^2 f(x^k),\epsilon_H)$, $\|\nabla^2 f(x^k)\|\le U_H$ and the fact that each iteration of Algorithm~\ref{pro:meo} requires only one Hessian-vector product of $f$, one can observe that the number of Hessian-vector products required by each call of Algorithm~\ref{pro:meo} is at most $\widetilde{\cO}(\min\{n,(U_H/\epsilon_H)^{1/2}\})$. Using these and statement (i) of Theorem~\ref{thm:c-pd-sosp}, we see that statement (ii) of Theorem~\ref{thm:c-pd-sosp} holds.
\end{proof}

\subsection{Proof of the main results in Section~\ref{sec:ncg}}\label{subsec:proof2}
In this subsection, we establish several technical lemmas, and then provide a proof of Theorems~\ref{thm:wd-ncg}, \ref{thm:c-ncg}, and \ref{thm:c-ncg-sosp}.


The following lemma presents some useful properties of the output of Algorithm~\ref{alg:capped-CG} when applied to solving the damped Newton system~\eqref{damp-n-trial}. It is a direct consequence of Lemma~\ref{lem:ppt-cg} given in Appendix~\ref{appendix:capped-CG}.

\begin{lemma}\label{lem:ppty-CG-1}
Suppose that Assumption~\ref{asp:NCG-cmplxity} holds and the direction $d_k^t$ results from Algorithm~\ref{alg:capped-CG} with a type specified in d$\_$type at some inner iteration $t$ of Algorithm~\ref{alg:NCG}. Then the following statements hold.
\begin{enumerate}[{\rm (i)}]
\item If d$\_$type=SOL, then $d_k^t$ satisfies 
\begin{eqnarray}
&(\sigma_t\epsilon_g)^{1/2}\|d_k^t\|^2\le (d_k^t)^T(\nabla^2 f(x^k)+2(\sigma_t\epsilon_g)^{1/2}I)d_k^t,\label{SOL-ppty-1-}\\
&\|d_k^t\|\le 1.1(\sigma_t\epsilon_g)^{-1/2}\|\nabla f(x^k)\|,\label{SOL-ppty-upbd-}\\
&(d_k^t)^T\nabla f(x^k)=-(d_k^t)^T(\nabla^2f(x^k)+2(\sigma_t\epsilon_g)^{1/2}I)d_k^t,\label{SOL-ppty-2-}\\
&\|(\nabla^2f(x^k)+2(\sigma_t\epsilon_g)^{1/2}I)d_k^t+\nabla f(x^k)\|\le \zeta(\sigma_t\epsilon_g)^{1/2}\|d_k^t\|/2.\label{SOL-ppty-3-}
\end{eqnarray}
\item If d$\_$type=NC, then $d^t_k$ satisfies $\nabla f(x^k)^Td_k^t\le 0$ and 
\begin{equation*}
(d_k^t)^T\nabla^2 f(x^k) d_k^t/\|d_k^t\|^2=- \min\{1,\sigma_t\}\|d_k^t\|\le- (\sigma_t\epsilon_g)^{1/2}.
\end{equation*}
\end{enumerate}
\end{lemma}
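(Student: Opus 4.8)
The plan is to recognize this as an essentially immediate corollary of the general properties of the capped CG method stated in Lemma~\ref{lem:ppt-cg} in Appendix~\ref{appendix:capped-CG}, applied to the specific linear system arising in Algorithm~\ref{alg:NCG}. First I would observe that at inner iteration $t$ of outer iteration $k$ of Algorithm~\ref{alg:NCG}, the capped CG method is invoked with $H=H_k=\nabla^2 f(x^k)$, $\varepsilon=(\sigma_t\epsilon_g)^{1/2}$, $g=g^k=\nabla f(x^k)$, accuracy parameter $\zeta$, and $U=0$. Note that this is exactly the same structural setup as in Lemma~\ref{lem:SOL-NC-ppty} for Algorithm~\ref{alg:NCG-pd}, with the only change being that the fixed quantity $\gamma_\nu(\epsilon_g)$ is replaced everywhere by the trial value $\sigma_t$; in particular $\varepsilon=(\sigma_t\epsilon_g)^{1/2}>0$ and $g^k\neq 0$ since the capped CG call is only made when $\|\nabla f(x^k)\|>\epsilon_g$, so the hypotheses needed to invoke Lemma~\ref{lem:ppt-cg} are met.

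The body of the proof then splits on the two possible output types. For d\_type=SOL, Lemma~\ref{lem:ppt-cg} guarantees that the returned $d=d_k^t$ satisfies the inexact-solution residual bound $\|(H+2\varepsilon I)d+g\|\le\zeta\|g\|$ together with $d^THd\ge-\varepsilon\|d\|^2$, and from these one extracts, in turn: the positive-definiteness-type estimate $\varepsilon\|d\|^2\le d^T(H+2\varepsilon I)d$ giving \eqref{SOL-ppty-1-}; the norm bound \eqref{SOL-ppty-upbd-} (this uses the standard $1.1$ factor from the capped CG analysis in Lemma~\ref{lem:ppt-cg}); the identity \eqref{SOL-ppty-2-}, which follows because on termination with a genuine solution-type direction the capped CG iterate satisfies $g^Td=-d^T(H+2\varepsilon I)d$ exactly; and the sharpened residual bound \eqref{SOL-ppty-3-} with the $\zeta\varepsilon\|d\|/2$ right-hand side, again a direct consequence of the corresponding item in Lemma~\ref{lem:ppt-cg}. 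Substituting $\varepsilon=(\sigma_t\epsilon_g)^{1/2}$ and $H=\nabla^2 f(x^k)$, $g=\nabla f(x^k)$ yields the four displayed inequalities verbatim. For d\_type=NC, Lemma~\ref{lem:ppt-cg} returns a direction $d$ with $d^THd<-\varepsilon\|d\|^2$, and Algorithm~\ref{alg:NCG} then rescales it to $d_k^t=-\sgn(d^Tg^k)\max\{1,1/\sigma_t\}(|d^TH_kd|/\|d\|^3)d$; the sign choice makes $\nabla f(x^k)^Td_k^t\le 0$, and a short computation with this rescaling shows $(d_k^t)^T\nabla^2 f(x^k)d_k^t/\|d_k^t\|^2=-\min\{1,\sigma_t\}\|d_k^t\|$, while $d^THd<-\varepsilon\|d\|^2=-(\sigma_t\epsilon_g)^{1/2}\|d\|^2$ together with the scaling gives $\|d_k^t\|\ge\max\{1,1/\sigma_t\}(\sigma_t\epsilon_g)^{1/2}$, hence $\min\{1,\sigma_t\}\|d_k^t\|\ge(\sigma_t\epsilon_g)^{1/2}$, which is the claimed inequality.

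Since all of this is a transcription of Lemma~\ref{lem:ppt-cg} with $\gamma_\nu(\epsilon_g)$ replaced by $\sigma_t$, the cleanest write-up is simply to state ``this is a direct consequence of Lemma~\ref{lem:ppt-cg}'' and point out the substitution, exactly as is done for Lemma~\ref{lem:SOL-NC-ppty}; there is no genuine obstacle here. The only point requiring a modicum of care is the negative-curvature rescaling bookkeeping in part (ii): one must track the $\max\{1,1/\sigma_t\}$ and $\min\{1,\sigma_t\}$ factors consistently and verify that the per-unit-norm curvature $(d_k^t)^T\nabla^2 f(x^k)d_k^t/\|d_k^t\|^2$ is invariant in sign/magnitude under the positive scaling while the norm $\|d_k^t\|$ picks up the factor $\max\{1,1/\sigma_t\}|d^TH_kd|/\|d\|^2$, which is what makes the final inequality $-\min\{1,\sigma_t\}\|d_k^t\|\le-(\sigma_t\epsilon_g)^{1/2}$ come out. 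This is routine algebra and does not need to be spelled out in detail.
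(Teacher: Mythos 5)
Your proposal is correct and takes the same route as the paper: the paper states Lemma~\ref{lem:ppty-CG-1} is a direct consequence of Lemma~\ref{lem:ppt-cg} with the substitution $H=\nabla^2 f(x^k)$, $g=\nabla f(x^k)$, $\varepsilon=(\sigma_t\epsilon_g)^{1/2}$, and gives no further proof. Your spelled-out verification of the NC rescaling bookkeeping is accurate and fills in the routine algebra that the paper leaves implicit.
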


The following lemma generalizes Lemma~\ref{lem:next-FOSP-or-large-step} for Algorithm~\ref{alg:NCG} with any $\sigma_t\ge\gamma_\nu(\epsilon_g)$.


\begin{lemma}\label{lem:NCG-pid-next-FOSP}
Suppose that Assumption~\ref{asp:NCG-cmplxity} holds and the direction $d_k^t$ results from Algorithm \ref{alg:capped-CG} with d$\_$type=SOL at some inner iteration $t$ of the $k$th outer iteration of Algorithm~\ref{alg:NCG}. Assume that $\sigma_t\ge \gamma_\nu(\epsilon_g)$ holds, where $\gamma_\nu(\epsilon_g)$ is given in \eqref{gma-eps}. Then, either both $f(x^k+d_k^t)\le f(x^k)$ and $\|\nabla f(x^k + d_k^t)\|\le\epsilon_g$ hold, or $6\|d_k^t\|\ge (\epsilon_g/\sigma_t)^{1/2}$ holds.
\end{lemma}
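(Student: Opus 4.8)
The strategy is to mirror the proof of Lemma~\ref{lem:next-FOSP-or-large-step} essentially verbatim, replacing $\gamma_\nu(\epsilon_g)$ by $\sigma_t$ wherever it appears as the regularization parameter, and invoking the hypothesis $\sigma_t \ge \gamma_\nu(\epsilon_g)$ only at the two points where a property of $\gamma_\nu(\epsilon_g)$ is genuinely needed rather than just the algebra of the damped Newton system. Assume, for contradiction, that $6\|d_k^t\| < (\epsilon_g/\sigma_t)^{1/2}$, and show that both $f(x^k+d_k^t)\le f(x^k)$ and $\|\nabla f(x^k+d_k^t)\|\le\epsilon_g$ then hold. Throughout, I would use Lemma~\ref{lem:ppty-CG-1}(i), which gives the analogues \eqref{SOL-ppty-1-}--\eqref{SOL-ppty-3-} of the bounds used in the original proof, and the fact that $f$ is descent along the iterates of Algorithm~\ref{alg:NCG}, so $f(x^k)\le f(x^0)$ and hence $x^k$ (and any point with smaller $f$-value) lies in $\scL_f(x^0)\subseteq\Omega$.

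For the first claim, $f(x^k+d_k^t)\le f(x^k)$: suppose not, introduce $\varphi(\alpha)=f(x^k+\alpha d_k^t)$, note $\varphi(1)>\varphi(0)$ and $\varphi'(0)<0$ by \eqref{SOL-ppty-2-}--\eqref{SOL-ppty-1-}, extract an interior local minimizer $\alpha_*\in(0,1)$ with $\varphi(\alpha_*)<\varphi(0)$ and $\nabla f(x^k+\alpha_* d_k^t)^T d_k^t = 0$, and then run the same chain of inequalities using \eqref{apx-nxt-grad}, \eqref{SOL-ppty-1-}, \eqref{SOL-ppty-2-} to obtain $\|d_k^t\|^\nu \ge (1+\nu)(\sigma_t\epsilon_g)^{1/2}/H_\nu$. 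Here is the first place the hypothesis enters: since $\sigma_t\ge\gamma_\nu(\epsilon_g)$, Lemma~\ref{lem:tech-gammamu-eps}, inequality \eqref{rela:Hnu-gammanu-1} with $\gamma=\sigma_t$, gives $(\sigma_t\epsilon_g)^{1/2}/H_\nu \ge 2^{1+\nu}(\epsilon_g/\sigma_t)^{\nu/2}$, so $\|d_k^t\|^\nu \ge (1+\nu)2^{1+\nu}(\epsilon_g/\sigma_t)^{\nu/2}$; combining with $6\|d_k^t\|<(\epsilon_g/\sigma_t)^{1/2}$ yields $(1+\nu)2^{1+\nu}6^\nu < 1$, contradicting $\nu\in[0,1]$.

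For the second claim, $\|\nabla f(x^k+d_k^t)\|\le\epsilon_g$: since $f(x^k+d_k^t)\le f(x^k)\le f(x^0)$, Lemma~\ref{lem:inexact} applies with $x=x^k$, $y=x^k+d_k^t$, and $\delta=\epsilon_g/2$. Bound $\|\nabla f(x^k+d_k^t)\|$ by the triangle inequality into three pieces exactly as in \eqref{gradxkdk-sol-1}: the first-order-approximation error $\le \tfrac12 L(\epsilon_g/2)\|d_k^t\|^2 + \epsilon_g/2$, the capped-CG residual $\le \tfrac{\zeta}{2}(\sigma_t\epsilon_g)^{1/2}\|d_k^t\|$ from \eqref{SOL-ppty-3-}, and the term $2(\sigma_t\epsilon_g)^{1/2}\|d_k^t\|$. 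Using $6\|d_k^t\|<(\epsilon_g/\sigma_t)^{1/2}$ turns the $\|d_k^t\|$- and $\|d_k^t\|^2$-terms into multiples of $\epsilon_g/\sigma_t$ and $\epsilon_g$ respectively, and the only remaining obstacle is controlling $L(\epsilon_g/2)/\sigma_t$. This is the second place the hypothesis is used: by \eqref{L1-bd-gma} with $a=2$, $L(\epsilon_g/2)\le\gamma_\nu(\epsilon_g)/4\le\sigma_t/4$, so $L(\epsilon_g/2)/\sigma_t\le 1/4$; plugging in and using $\zeta\in(0,1)$ gives $\|\nabla f(x^k+d_k^t)\| < \epsilon_g$, as in the original proof. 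The main (only real) difficulty is purely bookkeeping: making sure that $\sigma_t$ replaces $\gamma_\nu(\epsilon_g)$ consistently and that the two genuinely non-algebraic steps — the application of Lemma~\ref{lem:tech-gammamu-eps} and of \eqref{L1-bd-gma} — still go through, which they do precisely because $\sigma_t\ge\gamma_\nu(\epsilon_g)$ makes both $(\sigma_t\epsilon_g)^{1/2}/H_\nu$ larger and $L(\epsilon_g/2)/\sigma_t$ smaller, each in the favorable direction.
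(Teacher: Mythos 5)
Your proposal is correct and matches the paper's own proof essentially verbatim: both arguments mirror Lemma~\ref{lem:next-FOSP-or-large-step} with $\gamma_\nu(\epsilon_g)$ replaced by $\sigma_t$, invoking the hypothesis $\sigma_t\ge\gamma_\nu(\epsilon_g)$ exactly twice — once via Lemma~\ref{lem:tech-gammamu-eps} applied with $\gamma=\sigma_t$ to derive the contradiction, and once via \eqref{L1-bd-gma} to control $L(\epsilon_g/2)/\sigma_t$. The only cosmetic difference is that you bound $L(\epsilon_g/2)\le\gamma_\nu(\epsilon_g)/4\le\sigma_t/4$ directly, while the paper first substitutes $\gamma_\nu(\epsilon_g)/4$ and then uses $\gamma_\nu(\epsilon_g)/\sigma_t\le 1$; the two are arithmetically identical.
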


\begin{proof}
Since $d_k^t$ results from Algorithm~\ref{alg:capped-CG} with d$\_$type=SOL, we see that $\|\nabla f(u)\|>\epsilon_g$ and \eqref{SOL-ppty-1-}-\eqref{SOL-ppty-3-} hold for $d_k^t$. Moreover, by by $\|\nabla f(x^k)\|>\epsilon_g$ and \eqref{SOL-ppty-3-}, we conclude that $d_k^t\neq 0$. To prove this lemma, it suffices to show that both $f(x^k+d_k^t)\le f(x^k)$ and $\|\nabla f(x^k + d_k^t)\|\le\epsilon_g$ hold under the condition $6\|d_k^t\|< (\epsilon_g/\sigma_t)^{1/2}$. To this end, we assume that $6\|d_k^t\|< (\epsilon_g/\sigma_t)^{1/2}$ holds throughout the remainder of this proof. 

We first prove $f(x^k+d_k^t)\le f(x^k)$. Suppose for contradiction that $f(x^k + d_k^t)> f(x^k)$. Using the same arguments as for \eqref{equi-alphastar-ge} with $(d^k,\gamma_\nu(\epsilon_g))$ replaced by $(d_k^t,\sigma_t)$, we can have $\|d^t_k\|^\nu \ge (1+\nu)(\sigma_t\epsilon_g)^{1/2}/H_\nu$. Letting $\gamma=\sigma_t$ in Lemma~\ref{lem:tech-gammamu-eps}, we observe from \eqref{rela:Hnu-gammanu-1} that $(\sigma_t\epsilon_g)^{1/2}/H_\nu \ge 2^{1+\nu} (\epsilon_g/\sigma_t)^{\nu/2}$. Using these, we obtain that $\|d_k^t\|^\nu\ge(1+\nu)2^{1+\nu} (\epsilon_g/\sigma_t)^{\nu/2}$, which contradicts the assumption that $6\|d_k^t\|< (\epsilon_g/\sigma_t)^{1/2}$, given that $\nu\in[0,1]$. Hence, $f(x^k+d_k^t)\le f(x^k)$ holds as desired.


We now prove $\|\nabla f(x^k + d_k^t)\|\le\epsilon_g$. Recall that $6\|d_k^t\|< (\epsilon_g/\sigma_t)^{1/2}$, $\sigma_t\ge\gamma_\nu(\epsilon_g)$, and $L(\epsilon_g/2)\le\gamma_\nu(\epsilon_g)/4$ (see \eqref{L1-bd-gma} with $a=2$). By these and the same arguments as for \eqref{gradxkdk-sol-1} with $(d^k,\gamma_\nu(\epsilon_g))$ replaced by $(d_k^t,\sigma_t)$, one can have
\begin{align*}
\|\nabla f(x^k + d_k^t)\|&\le \frac{L(\epsilon_g/2)}{2}\|d_k^t\|^2 + \frac{\epsilon_g}{2} + \frac{4+\zeta}{2}(\sigma_t\epsilon_g)^{1/2}\|d_k^t\|\\
&< \frac{L(\epsilon_g/2)}{72}\frac{\epsilon_g}{\sigma_t} + \frac{\epsilon_g}{2} + \frac{4+\zeta}{12}\epsilon_g \le \frac{\gamma_\nu(\epsilon_g)}{288}\frac{\epsilon_g}{\sigma_t} + \frac{\epsilon_g}{2} + \frac{4+\zeta}{12}\epsilon_g \le \epsilon_g,
\end{align*}
where the second inequality is due to $6\|d_k^t\|< (\epsilon_g/\sigma_t)^{1/2}$, the third inequality follows from $L(\epsilon_g/2)\le\gamma_\nu(\epsilon_g)/4$, and the last inequality is due to $\sigma_t\ge\gamma_\nu(\epsilon_g)$. Hence, $\|\nabla f(x^k + d_k^t)\|\le\epsilon_g$ holds.
\end{proof}

The next lemma shows that when $d^t_k$ generated in Algorithm~\ref{alg:NCG} is associated with d$\_$type=SOL and $\sigma_t\ge\gamma_\nu(\epsilon)$, Algorithm~\ref{alg:NCG} breaks its inner loop at the inner iteration $t$.


\begin{lemma}\label{lem:well-defined-pid-sol}
Suppose that Assumption~\ref{asp:NCG-cmplxity} holds and the direction $d_k^t$ results from Algorithm~\ref{alg:capped-CG} with d$\_$type=SOL at some inner iteration $t$ of the $k$th outer iteration of Algorithm~\ref{alg:NCG}. Assume that $\sigma_t\ge\gamma_\nu(\epsilon_g)$ holds, where $\gamma_\nu(\epsilon_g)$ is given in \eqref{gma-eps}. Then, either both $f(x^k+d_k^t)\le f(x^k)$ and $\|\nabla f(x^k+d_k^t)\|\le\epsilon_g$ hold, or there exists some nonnegative integer $j$ satisfying \eqref{sol-lwbd-thetaj} and \eqref{ls-sol-stepsize-2}.
\end{lemma}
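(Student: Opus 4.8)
## Proof plan for Lemma~\ref{lem:well-defined-pid-sol}

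\textbf{Overall strategy.} The statement to prove is the natural ``parameter-free analogue'' of Lemma~\ref{lem:sol-1}(i) restricted to the regime $\sigma_t \ge \gamma_\nu(\epsilon_g)$. I would prove it by contradiction on the negation: assume that \emph{not both} $f(x^k+d_k^t)\le f(x^k)$ and $\|\nabla f(x^k+d_k^t)\|\le\epsilon_g$ hold, and then exhibit a nonnegative integer $j$ satisfying both \eqref{sol-lwbd-thetaj} and \eqref{ls-sol-stepsize-2}. The plan is to first invoke Lemma~\ref{lem:NCG-pid-next-FOSP}: since $\sigma_t\ge\gamma_\nu(\epsilon_g)$ and d$\_$type=SOL, the failure of the pair of conditions forces $6\|d_k^t\| \ge (\epsilon_g/\sigma_t)^{1/2}$. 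This inequality is exactly what makes the threshold in \eqref{sol-lwbd-thetaj} meaningful, because it gives $2(1-\eta)\theta(\epsilon_g/\sigma_t)^{1/4}/(3\|d_k^t\|^{1/2}) \le 2(1-\eta)\theta\cdot 6^{1/2}/(3\cdot 6^{1/2}) < 1$ roughly — more precisely it bounds the RHS of \eqref{sol-lwbd-thetaj} by something strictly below $1$, so the set of $j\ge0$ meeting \eqref{sol-lwbd-thetaj} is a genuine finite initial segment of $\{0,1,2,\dots\}$ rather than all of it.

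\textbf{Key steps.} First, record the SOL-properties \eqref{SOL-ppty-1-}--\eqref{SOL-ppty-3-} for $d_k^t$, and note $d_k^t\ne0$ from $\|\nabla f(x^k)\|>\epsilon_g$ and \eqref{SOL-ppty-3-}. Second, apply Lemma~\ref{lem:NCG-pid-next-FOSP} to get $6\|d_k^t\|\ge(\epsilon_g/\sigma_t)^{1/2}$. Third, mirror the ``Case 1 / Case 2'' dichotomy from the proof of Lemma~\ref{lem:sol-1}(i), with $(d^k,\gamma_\nu(\epsilon_g))$ replaced by $(d_k^t,\sigma_t)$: for any $j\ge0$ that violates \eqref{ls-sol-stepsize-2}, show $\theta^{(1+\nu)j}\ge (1-\eta)(\sigma_t\epsilon_g)^{1/2}/(H_\nu\|d_k^t\|^\nu)$. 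Case 1 ($f(x^k+\theta^j d_k^t)>f(x^k)$) uses the existence of an interior local minimizer $\alpha_*\in(0,\theta^j)$ of $\varphi(\alpha)=f(x^k+\alpha d_k^t)$, the stationarity $\nabla f(x^k+\alpha_* d_k^t)^T d_k^t=0$, descent of $f$ along iterates so \eqref{apx-nxt-grad} applies, and \eqref{SOL-ppty-1-}--\eqref{SOL-ppty-2-}; Case 2 ($f(x^k+\theta^j d_k^t)\le f(x^k)$) uses \eqref{desc-ineq} together with \eqref{SOL-ppty-1-}--\eqref{SOL-ppty-2-} and the violation of \eqref{ls-sol-stepsize-2}. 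Fourth, combine that bound with $\sigma_t\ge\gamma_\nu(\epsilon_g)$ via Lemma~\ref{lem:tech-gammamu-eps} (taking $\gamma=\sigma_t$, using \eqref{rela:Hnu-gammanu-1}, which is valid since $\sigma_t\ge\gamma_\nu(\epsilon_g)=\gamma_\nu(\epsilon_g)$ — wait, Lemma~\ref{lem:tech-gammamu-eps} needs $\gamma\ge\gamma_\nu(\epsilon_g)$, which holds) to get $\theta^{(1+\nu)j}\ge (1-\eta)2^{1+\nu}(\epsilon_g/\sigma_t)^{\nu/2}/\|d_k^t\|^\nu$ for every violating $j$; then take the $(1/(1+\nu))$th root and apply $6\|d_k^t\|\ge(\epsilon_g/\sigma_t)^{1/2}$ exactly as in \eqref{claim:lbd-stepsize-1} to conclude $\theta^j\ge 2(1-\eta)(\epsilon_g/\sigma_t)^{1/4}/(3\|d_k^t\|^{1/2})$ for every violating $j$. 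Fifth, conclude: since $\theta\in(0,1)$, the set of violating $j$ is bounded, so some $j^\star$ satisfies \eqref{ls-sol-stepsize-2}; and since every violating $j$ has $\theta^j\ge 2(1-\eta)(\epsilon_g/\sigma_t)^{1/4}/(3\|d_k^t\|^{1/2})$, the smallest such $j^\star$ satisfies $\theta^{j^\star-1}\ge$ that same threshold (taking $j^\star=0$ trivially works too), hence $\theta^{j^\star}\ge\theta\cdot 2(1-\eta)(\epsilon_g/\sigma_t)^{1/4}/(3\|d_k^t\|^{1/2})\ge\min\{1,2(1-\eta)\theta(\epsilon_g/\sigma_t)^{1/4}/(3\|d_k^t\|^{1/2})\}$, which is \eqref{sol-lwbd-thetaj}. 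A small care point: if \eqref{ls-sol-stepsize-2} holds already at $j=0$, then $j^\star=0$ and \eqref{sol-lwbd-thetaj} holds since $\theta^0=1\ge\min\{1,\cdot\}$; otherwise $j^\star\ge1$ and the $j=j^\star-1$ bound is used.

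\textbf{Anticipated main obstacle.} The genuinely delicate point is the bookkeeping in the last step: one must show not merely that \emph{some} $j$ satisfies \eqref{ls-sol-stepsize-2}, but that the \emph{smallest} such $j$ also satisfies the lower bound \eqref{sol-lwbd-thetaj}. This requires arguing that all integers strictly below $j^\star$ are ``violating'' $j$'s and hence obey $\theta^j\ge 2(1-\eta)(\epsilon_g/\sigma_t)^{1/4}/(3\|d_k^t\|^{1/2})$; in particular $j=j^\star-1$ does, giving $\theta^{j^\star}=\theta\cdot\theta^{j^\star-1}$ bounded below appropriately — this is the standard ``backtracking gives a step not much smaller than the threshold'' argument, but it has to be phrased so that the $\min\{1,\cdot\}$ in \eqref{sol-lwbd-thetaj} absorbs the edge case $j^\star=0$ cleanly. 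Everything else is a mechanical transcription of the proof of Lemma~\ref{lem:sol-1}(i) with the substitution $\gamma_\nu(\epsilon_g)\rightsquigarrow\sigma_t$, made legitimate precisely because the hypothesis $\sigma_t\ge\gamma_\nu(\epsilon_g)$ keeps Lemmas~\ref{lem:tech-gammamu-eps} and \ref{lem:NCG-pid-next-FOSP} and the bound $L(\epsilon_g/2)\le\gamma_\nu(\epsilon_g)/4\le\sigma_t/4$ available.
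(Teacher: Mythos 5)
Your proof takes essentially the same route as the paper's: invoke Lemma~\ref{lem:NCG-pid-next-FOSP} to get $6\|d_k^t\|\ge(\epsilon_g/\sigma_t)^{1/2}$, reuse the Case 1/Case 2 bound from Lemma~\ref{lem:sol-1}(i) with $(d^k,\gamma_\nu(\epsilon_g))\rightsquigarrow(d_k^t,\sigma_t)$, apply Lemma~\ref{lem:tech-gammamu-eps} with $\gamma=\sigma_t$, take roots, and identify the smallest nonaccepting-to-accepting transition $j_t$ (handling $j_t=0$ separately). The arithmetic aside claiming the RHS of \eqref{sol-lwbd-thetaj} is bounded by $2(1-\eta)\theta\cdot 6^{1/2}/(3\cdot 6^{1/2})<1$ is garbled (the correct bound is $2\sqrt{6}(1-\eta)\theta/3$, which need not be below $1$), but this is a throwaway motivational remark — the actual argument correctly relies on the $\min\{1,\cdot\}$ cap in \eqref{sol-lwbd-thetaj}, so the gap is cosmetic, not substantive.
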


\begin{proof}
Since $d_k^t$ results from Algorithm~\ref{alg:capped-CG} with d$\_$type=SOL, one can see that $\|\nabla f(x^k)\|>\epsilon_g$ and \eqref{SOL-ppty-1-}-\eqref{SOL-ppty-3-} hold for $d_k^t$. 
To prove this lemma, it suffices to show that if at least one of $f(x^k+d_k^t)\le f(x^k)$ and $\|\nabla f(x^k + d_k^t)\|\le\epsilon_g$ does not hold, there exists some nonnegative integer $j$ satisfying \eqref{sol-lwbd-thetaj} and \eqref{ls-sol-stepsize-2}.  To this end, we assume throughout the remainder of this proof that at least one of $f(x^k+d_k^t)\le f(x^k)$ and $\|\nabla f(x^k + d_k^t)\|\le\epsilon_g$ does not hold.  It then follows from Lemma~\ref{lem:NCG-pid-next-FOSP} that $6\|d_k^t\|\ge (\epsilon_g/\sigma_t)^{1/2}$. 

If \eqref{ls-sol-stepsize-2} holds with $j=0$,  then \eqref{sol-lwbd-thetaj}
and \eqref{ls-sol-stepsize-2} hold for $j=0$ and hence the conclusion of this lemma holds. Now, we suppose that \eqref{ls-sol-stepsize-2} is violated by some $j\ge0$. Using the same arguments as for \eqref{claim:lbd-stepsize} with $(d^k,\gamma_\nu(\epsilon_g))$ replaced by $(d_k^t,\sigma_t)$, we can have that all $j\ge0$ violating \eqref{ls-sol-stepsize-2} satisfy
\begin{align}\label{pf-thetaj-vio-lwbd}
\theta^{(1+\nu)j} \ge (1-\eta)(\sigma_t\epsilon_g)^{1/2}/(H_\nu \|d^t_k\|^\nu).
\end{align}
Recall that $\sigma_t\ge\gamma_\nu(\epsilon_g)$. Then, letting $\gamma=\sigma_t$ in Lemma~\ref{lem:tech-gammamu-eps}, we have from \eqref{rela:Hnu-gammanu-1} that $(\sigma_t\epsilon_g)^{1/2}/H_\nu \ge 2^{1+\nu} (\epsilon_g/\sigma_t)^{\nu/2}$, which together with \eqref{pf-thetaj-vio-lwbd} implies that all $j\ge0$ violating \eqref{ls-sol-stepsize-2} satisfy
\[
\theta^{(1+\nu)j} \ge (1-\eta) 2^{1+\nu} (\epsilon_g/\sigma_t)^{\nu/2}/\|d^t_k\|^\nu.
\]
Taking the $\left(\frac{1}{1+\nu}\right)$th root of both sides of the above inequality, and using $6\|d_k^t\|\ge (\epsilon_g/\sigma_t)^{1/2}$ and $\eta\in(0,1)$, we deduce that
\begin{align*}
\theta^j & \ge 2(1-\eta)^{1/(1+\nu)}\left(\frac{ (\epsilon_g/\sigma_t)^{1/4}}{\|d^t_k\|^{1/2}}\right)^{2\nu/(1+\nu)}\nonumber\\
& \ge 2 (1-\eta)\left(\frac{ (\epsilon_g/\sigma_t)^{1/4}}{\|d^t_k\|^{1/2}}\right)^{2\nu/(1+\nu)} \left(\frac{ (\epsilon_g/\sigma_t)^{1/4}}{\sqrt{6}\|d^t_k\|^{1/2}}\right)^{(1-\nu)/(1+\nu)} \ge \frac{2(1-\eta) (\epsilon_g/\sigma_t)^{1/4}}{3\|d^t_k\|^{1/2}}.
\end{align*}
By this and $\theta\in(0,1)$, one can observe that all $j\ge0$ that violate \eqref{ls-sol-stepsize-2} must be bounded above. Let $j_t$ be the smallest integer satisfying \eqref{ls-sol-stepsize-2}. Then, $j=j_t-1$ satisfies the above inequality and hence
\[
\theta^{j_t} \ge 2(1-\eta)\theta (\epsilon_g/\sigma_t)^{1/4}/(3\|d^t_k\|^{1/2}).
\]
It follows that such $j_t$ satisfies both \eqref{sol-lwbd-thetaj} and \eqref{ls-sol-stepsize-2}. Hence, the conclusion of this lemma holds.
\end{proof}

The following lemma shows that when $d^t_k$ generated in Algorithm~\ref{alg:NCG} is associated with d$\_$type=NC and $\sigma_t\ge\gamma_\nu(\epsilon)$, Algorithm~\ref{alg:NCG} breaks its inner loop at the inner iteration $t$.


\begin{lemma}\label{lem:well-defined-pid-nc}
Suppose that Assumption~\ref{asp:NCG-cmplxity} holds and the direction $d_k^t$ results from Algorithm \ref{alg:capped-CG} with d$\_$type=NC at some inner iteration $t$ of the $k$th outer iteration of Algorithm~\ref{alg:NCG}. Assume that $\sigma_t\ge\gamma_\nu(\epsilon_g)$ holds, where $\gamma_\nu(\epsilon_g)$ is given in \eqref{gma-eps}. Then, there exists some nonnegative integer $j$ satisfying \eqref{nc-lwbd-thetaj} and \eqref{ls-nc-stepsize-2}. 
\end{lemma}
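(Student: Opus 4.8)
\emph{Proof proposal.} The plan is to follow the proof of Lemma~\ref{lem:nc-1}(i) almost verbatim, with the fixed regularizer $\gamma_\nu(\epsilon_g)$ replaced throughout by the trial value $\sigma_t$; the hypothesis $\sigma_t\ge\gamma_\nu(\epsilon_g)$ enters only to license an application of Lemma~\ref{lem:tech-gammamu-eps} with $\gamma=\sigma_t$. First I would record the NC properties: since $d_k^t$ has type NC, Lemma~\ref{lem:ppty-CG-1}(ii) gives $\nabla f(x^k)^Td_k^t\le 0$ and $(d_k^t)^T\nabla^2 f(x^k)d_k^t/\|d_k^t\|^2=-\min\{1,\sigma_t\}\|d_k^t\|\le-(\sigma_t\epsilon_g)^{1/2}<0$, hence $\|d_k^t\|\ge\max\{1,1/\sigma_t\}(\sigma_t\epsilon_g)^{1/2}>0$. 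I would also note that every acceptance rule of Algorithm~\ref{alg:NCG} enforces $f(x^{k+1})\le f(x^k)$, so $f(x^k)\le f(x^0)$ and any $y$ with $f(y)\le f(x^0)$ lies in $\Omega$, which is what is needed to invoke \eqref{F-Hess-Lip} and \eqref{desc-ineq} at such points. If \eqref{ls-nc-stepsize-2} already holds at $j=0$, then $j=0$ satisfies the lemma, because \eqref{nc-lwbd-thetaj} at $j=0$ reads $\theta^{-1}\ge\min\{1,1/\sigma_t\}$, which holds since $\theta\in(0,1)$; so I would assume henceforth that \eqref{ls-nc-stepsize-2} fails at $j=0$.

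Next I would establish, by exactly the two-case argument used to prove \eqref{lwbd-j-violate-ls-nc} in Lemma~\ref{lem:nc-1} (case $f(x^k+\theta^jd_k^t)>f(x^k)$: pass to a local minimizer $\alpha_*\in(0,\theta^j)$ of $\alpha\mapsto f(x^k+\alpha d_k^t)$, use its second-order optimality condition, \eqref{F-Hess-Lip}, and the lower bound on $\|d_k^t\|$; case $f(x^k+\theta^jd_k^t)\le f(x^k)$: use \eqref{desc-ineq} together with the NC properties), with $(d^k,\gamma_\nu(\epsilon_g))$ replaced by $(d_k^t,\sigma_t)$, that every nonnegative integer $j$ violating \eqref{ls-nc-stepsize-2} satisfies
\[
\theta^{\nu j}>(1-\eta/2)\min\{1,\sigma_t\}^\nu(\sigma_t\epsilon_g)^{(1-\nu)/2}/H_\nu.
\]
Since $\sigma_t\ge\gamma_\nu(\epsilon_g)$, Lemma~\ref{lem:tech-gammamu-eps} with $\gamma=\sigma_t$ yields $(\sigma_t\epsilon_g)^{(1-\nu)/2}/H_\nu\ge 2^{1+\nu}/\sigma_t^\nu$; combining this with the displayed bound and $\min\{1,\sigma_t\}/\sigma_t=\min\{1,1/\sigma_t\}$, $1-\eta/2>1/2$, I get that every $j$ violating \eqref{ls-nc-stepsize-2} obeys $\theta^{\nu j}>2^\nu\min\{1,1/\sigma_t\}^\nu$, in parallel with \eqref{a-counter-ineq}.

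Finally I would split on $\nu$. When $\nu=0$, the last inequality reads $1>1$, a contradiction, so in fact \eqref{ls-nc-stepsize-2} must already hold at $j=0$, which (as above) also satisfies \eqref{nc-lwbd-thetaj}; thus the "fails at $j=0$" branch is vacuous and the lemma holds. When $\nu\in(0,1]$, taking $\nu$th roots gives $\theta^j>2\min\{1,1/\sigma_t\}$ for every violating $j$, so the violating $j$ are bounded above (as $\theta^j\to 0$); let $j_t\ge 1$ be the smallest integer for which \eqref{ls-nc-stepsize-2} holds. Then $j=j_t-1$ violates \eqref{ls-nc-stepsize-2}, so $\theta^{j_t-1}>2\min\{1,1/\sigma_t\}>\min\{1,1/\sigma_t\}$, i.e.\ $j=j_t$ satisfies \eqref{nc-lwbd-thetaj}, and it satisfies \eqref{ls-nc-stepsize-2} by construction, completing the proof. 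The only points requiring any care — and hence the main (minor) obstacles — are the separate treatment of $\nu=0$, where the line search must succeed already at $j=0$, and checking that $j_t\ge 1$ so that $j_t-1$ is a legitimate nonnegative integer; the bulk of the argument is a direct transcription of the corresponding step in Lemma~\ref{lem:nc-1}.
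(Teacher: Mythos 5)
Your proposal is correct and follows essentially the same route as the paper's proof: first dispatch the case where \eqref{ls-nc-stepsize-2} already holds at $j=0$, then show via the two-case analysis (transcribed from Lemma~\ref{lem:nc-1}) that any violating $j$ satisfies $\theta^{\nu j}>(1-\eta/2)\min\{1,\sigma_t\}^\nu(\sigma_t\epsilon_g)^{(1-\nu)/2}/H_\nu$, invoke Lemma~\ref{lem:tech-gammamu-eps} with $\gamma=\sigma_t$ to reduce this to $\theta^{\nu j}>2^\nu\min\{1,1/\sigma_t\}^\nu$, handle $\nu=0$ by contradiction at $j=0$, and for $\nu\in(0,1]$ pass to the smallest admissible $j_t$ and verify \eqref{nc-lwbd-thetaj} via the violation at $j_t-1$. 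The paper phrases the last step slightly differently ("either $j_t=0$ or the bound holds at $j=j_t-1$"), but since you have already handled $j_t=0$ separately this is the same argument.
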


\begin{proof}
Since $d^t_k$ is associated with d$\_$type of NC, we observe from Lemma~\ref{lem:ppty-CG-1}(ii) that 
\begin{equation*}
\nabla f(x^k)^Td^t_k\le 0,\quad (d^t_k)^T\nabla^2 f(x^k) d^t_k/\|d^t_k\|^2 = - \min\{1,\sigma_t\}\|d_k^t\|\le -(\sigma_t\epsilon_g)^{1/2}.
\end{equation*}	 
If \eqref{ls-nc-stepsize-2} holds with $j=0$, then \eqref{nc-lwbd-thetaj} and \eqref{ls-nc-stepsize-2} hold for $j=0$ and hence the conclusion of this lemma holds. Now, we suppose that  \eqref{ls-nc-stepsize-2} is violated by some $j\ge0$. Using the same arguments as for \eqref{lwbd-j-violate-ls-nc} with $(d^k,\gamma_\nu(\epsilon_g))$ replaced by $(d^t_k,\sigma_t)$, we can have that all $j\ge0$ violating \eqref{ls-nc-stepsize-2} satisfy
\begin{align}\label{lwbd-pf-nc-thetanuj}
\theta^{\nu j} > (1-\eta/2)\min\{1,\sigma_t\}^\nu (\sigma_t \epsilon_g)^{(1-\nu)/2}/H_\nu.
\end{align}
Recall that $\sigma_t\ge \gamma_\nu(\epsilon_g)$. Then, letting $\gamma=\sigma_t$ in Lemma~\ref{lem:tech-gammamu-eps}, and using \eqref{rela:Hnu-gammanu-2}, one has
$(\sigma_t \epsilon_g)^{(1-\nu)/2}/H_\nu \ge 2^{1+\nu}/\sigma_t^\nu$, which along with \eqref{lwbd-pf-nc-thetanuj} and $\eta\in(0,1)$ implies that all $j\ge0$ violating \eqref{ls-nc-stepsize-2} satisfy
\begin{equation}\label{lwbd-pf-nc-thetanuj-}
\theta^{\nu j} > (1-\eta/2) \min\{1,\sigma_t\}^\nu 2^{1+\nu}/\sigma_t^\nu > 2^\nu \min\{1,1/\sigma_t\}^\nu.    
\end{equation}
When $\nu=0$, one can see that \eqref{lwbd-pf-nc-thetanuj-} does not hold for $j=0$, which implies that \eqref{ls-nc-stepsize-2}  holds for $j=0$. Also,  \eqref{nc-lwbd-thetaj} holds for $j=0$ due to $\theta\in(0,1)$. When $\nu\in(0,1]$, one can see from \eqref{lwbd-pf-nc-thetanuj-} that  all $j\ge 0$ violating \eqref{ls-nc-stepsize-2} must be bounded above. Consequently, there exists some $j \geq 0$ such that \eqref{ls-nc-stepsize-2} holds. Let $j_t$ be the smallest nonnegative integer satisfying \eqref{ls-nc-stepsize-2}. Then $j_t=0$ or \eqref{lwbd-pf-nc-thetanuj-} holds for $j=j_t-1$. This together with $\theta\in(0,1)$ implies that $\theta^{j_t}\ge \theta\min\{1,1/\sigma_t\}$. Hence, both \eqref{nc-lwbd-thetaj} and \eqref{ls-nc-stepsize-2} hold for $j=j_t$. This completes the proof of this lemma.
\end{proof}

We are now ready to prove Theorem~\ref{thm:wd-ncg}.

\begin{proof}[Proof of Theorem~\ref{thm:wd-ncg}]

We first show that the number of calls of Algorithm~\ref{alg:capped-CG} at each outer iteration $k$ of Algorithm~\ref{alg:NCG} is at most $T$, where $T$ is given in \eqref{inner-bd}. To this end, let us consider an arbitrary outer iteration $k$ of Algorithm~\ref{alg:NCG}. 
 Clearly, this statement  holds if Algorithm~\ref{alg:capped-CG} is not invoked at the iteration $k$. Now, suppose that Algorithm~\ref{alg:capped-CG} is invoked at the iteration $k$. If Algorithm~\ref{alg:NCG} breaks its inner loop at $t=0$, then the number of calls of Algorithm~\ref{alg:capped-CG} is $1$, which is clearly bounded above by $T$. If Algorithm~\ref{alg:NCG} does not break its inner loop at $t=0$, one can see from Lemmas~\ref{lem:NCG-pid-next-FOSP}, \ref{lem:well-defined-pid-sol} and \ref{lem:well-defined-pid-nc} that Algorithm~\ref{alg:NCG} must break its inner loop at $t=t_k$ for some $t_k \geq 1$ and 
$\sigma_{t_k-1} <\gamma_\nu(\epsilon_g)$. Using, \eqref{inner-bd}, and the fact that $\sigma_{t_k}=r\sigma_{t_k-1}$, we have 
\beq \label{gamma_k}
\sigma_{t_k}=r\sigma_{t_k-1}<r\gamma_\nu(\epsilon_g)\le \sigma(\epsilon_g).
\eeq
In addition, notice from Algorithm~\ref{alg:NCG} that $\sigma_{t_k}= r^{t_k}\sigma_0\ge r^{t_k}\gamma_{-1}$.  It then follows that
$r^{t_k}\gamma_{-1}\le \sigma(\epsilon_g)$, which implies that $t_k \leq T-1$. Hence,  the number of calls of Algorithm~\ref{alg:capped-CG} at the outer iteration $k$ of Algorithm~\ref{alg:NCG} is at most $T$. 

We next show that $\gamma_k\le\sigma(\epsilon_g)$ for all $k\ge-1$ by induction. Indeed, $\gamma_{-1}\le \sigma(\epsilon_g)$ holds due to the definition of $\sigma(\epsilon_g)$. Now, suppose that $\gamma_{k-1}\le \sigma(\epsilon_g)$ holds for some $k\ge0$.  If $\|\nabla f(x^k)\|\le\epsilon_g$, we see from Algorithm~\ref{alg:NCG} that $\gamma_k=\gamma_{k-1}$ and hence $\gamma_k\le \sigma(\epsilon_g)$ holds. If $\|\nabla f(x^k)\|> \epsilon_g$ and Algorithm~\ref{alg:NCG} breaks its inner loop at $t=0$, then $\gamma_k=\sigma_0=\max\{\gamma_{-1},\gamma_{k-1}/r\}$, which, together with $r>1$ and the supposition $\gamma_{k-1}\le \sigma(\epsilon_g)$, implies that $\gamma_k \le\sigma(\epsilon_g)$.  Otherwise, Algorithm~\ref{alg:NCG} must break its inner loop at $t=t_k$ for some $t_k \geq 1$. By this and \eqref{gamma_k}, one has $\gamma_k=\sigma_{t_k}\le \sigma(\epsilon_g)$. 
 This completes the induction. Hence, $\gamma_k\le\sigma(\epsilon_g)$ holds as desired.

We finally show that the total number of calls of Algorithms~\ref{alg:capped-CG} and \ref{pro:meo} during the first $s$ outer iterations of Algorithm~\ref{alg:NCG} is at most $T+2s$. For convenience, we let $\tau_k$ denote the number of calls of Algorithms~\ref{alg:capped-CG} and \ref{pro:meo} in the outer iteration $k$ of Algorithm~\ref{alg:NCG}. If $\|\nabla f(x^k)\|\le\epsilon_g$, then Algorithm~\ref{pro:meo} is invoked in the outer iteration $k$, $\gamma_k=\gamma_{k-1}$, and $\tau_k=1$. Otherwise, Algorithm~\ref{alg:capped-CG} is invoked in the outer iteration $k$, and we have either ($\gamma_k =  \sigma_0$ and $\tau_k=1$) or 
($\gamma_k  = r^{\tau_k-1} \sigma_0$ and $\tau_k>1$). By these, $r>1$ and  $\sigma_0 = \max\{\gamma_{-1},\gamma_{k-1}/r\}$ (see Algorithm~\ref{alg:NCG}), one can obtain that $\gamma_k\ge r^{\tau_k-2}\gamma_{k-1}$ for all $k\ge0$. It then follows that
\[
\sum_{k=0}^{s-1}\tau_k \le \ln(\gamma_{s-1}/\gamma_{-1})/\ln r + 2s.
\]
where together with $\gamma_{s-1}\le \sigma(\epsilon_g)$ implies that the total number of calls of Algorithms~\ref{alg:capped-CG} and \ref{pro:meo} during the first $s$ outer iterations of Algorithm~\ref{alg:NCG} is at most $T+2s$. 
\end{proof}

The next lemma shows that when the search direction $d^k$ in Algorithm~\ref{alg:NCG} is of type `SOL', the next iterate $x^{k+1}$ either satisfies $\|\nabla f(x^{k+1})\|\le\epsilon_g$ or produces a sufficient decrease in $f$.

\begin{lemma}\label{suf-dec-sol1}
Suppose that Assumption~\ref{asp:NCG-cmplxity} holds and the direction $d^k$ results from Algorithm~\ref{alg:capped-CG} with d$\_$type=SOL at some outer iteration $k$ of Algorithm~\ref{alg:NCG}. Let $\hat{c}_{\mathrm{sol}}$ be defined in \eqref{hat-csol-cnc}. Then, the next iterate $x^{k+1}=x^k+\alpha_k d^k$ satisfies either $\|\nabla f(x^{k+1})\|\le\epsilon_g$ or 
\begin{equation}\label{sd-sol-ncg}
f(x^k) - f(x^{k+1}) \ge \hat{c}_{\mathrm{sol}}\epsilon_g^{3/2}/\gamma_k^{1/2}.
\end{equation}
\end{lemma}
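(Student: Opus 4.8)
The plan is to mirror the structure of Lemma~\ref{lem:sol-1}(ii), but now keeping $\sigma_t$ (equivalently $\gamma_k$) as a free trial parameter rather than the exact value $\gamma_\nu(\epsilon_g)$, and to exploit the extra guard condition \eqref{sol-lwbd-thetaj} that Algorithm~\ref{alg:NCG} enforces on the accepted step. First I would record the properties available at the outer iteration $k$: since $d^k$ is the accepted $d_k^t$ with d$\_$type=SOL, the bounds \eqref{SOL-ppty-1-}--\eqref{SOL-ppty-3-} hold with $\sigma_t=\gamma_k$, and $d^k\neq0$ because $\|\nabla f(x^k)\|>\epsilon_g$. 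Also $f$ is descent along the iterates, so $f(x^k)\le f(x^0)$, and $f(x^k+\alpha_k d^k)\le f(x^k)$ whenever the step came from \eqref{ls-sol-stepsize-2} (or from the $j_t=0$ branch where $f(x^k+d^t_k)\le f(x^k)$ is checked directly), which keeps the relevant iterates in $\scL_f(x^0)\subseteq\Omega$.

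Next I would split into the two ways the inner loop can break with d$\_$type=SOL. \emph{Case 1: $j_t=0$ was accepted because $f(x^k+d^k)\le f(x^k)$ and $\|\nabla f(x^k+d^k)\|\le\epsilon_g$.} Then $x^{k+1}=x^k+d^k$ satisfies $\|\nabla f(x^{k+1})\|\le\epsilon_g$, so the conclusion holds trivially. \emph{Case 2: $j_t$ is the smallest nonnegative integer satisfying \eqref{sol-lwbd-thetaj} and \eqref{ls-sol-stepsize-2}, and we are in the situation $6\|d^k\|\ge(\epsilon_g/\gamma_k)^{1/2}$} (this inequality is exactly the condition under which this branch is entered). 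From \eqref{ls-sol-stepsize-2} with $j=j_t$,
\[
f(x^k)-f(x^{k+1}) \ge \eta(\gamma_k\epsilon_g)^{1/2}\theta^{2j_t}\|d^k\|^2 = \eta(\gamma_k\epsilon_g)^{1/2}\alpha_k^2\|d^k\|^2.
\]
Now I use the lower bound on $\alpha_k=\theta^{j_t}$ furnished by \eqref{sol-lwbd-thetaj}, namely $\theta^{j_t}\ge\min\{1,2(1-\eta)\theta(\epsilon_g/\gamma_k)^{1/4}/(3\|d^k\|^{1/2})\}$. Substituting and handling the two cases of the $\min$: if the $\min$ is $1$, then $\alpha_k^2\|d^k\|^2=\|d^k\|^2\ge(\epsilon_g/\gamma_k)/36$ by the branch condition, giving $f(x^k)-f(x^{k+1})\ge(\eta/36)\epsilon_g^{3/2}/\gamma_k^{1/2}$; if the $\min$ is the second term, then $\alpha_k^2\|d^k\|^2\ge (2(1-\eta)\theta/3)^2(\epsilon_g/\gamma_k)^{1/2}\|d^k\|\ge (2(1-\eta)\theta/3)^2(\epsilon_g/\gamma_k)^{1/2}(\epsilon_g/\gamma_k)^{1/2}/6 = (1/6)(2(1-\eta)\theta/3)^2(\epsilon_g/\gamma_k)$, again using $6\|d^k\|\ge(\epsilon_g/\gamma_k)^{1/2}$. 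In both sub-cases $f(x^k)-f(x^{k+1})\ge (\eta/6)\min\{1/6,(2(1-\eta)\theta/3)^2\}\,\epsilon_g^{3/2}/\gamma_k^{1/2}=\hat c_{\mathrm{sol}}\,\epsilon_g^{3/2}/\gamma_k^{1/2}$, matching \eqref{hat-csol-cnc}.

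I expect the only delicate point to be justifying that we are genuinely in one of these two cases — i.e., that the inner loop cannot break while $6\|d^k\|<(\epsilon_g/\gamma_k)^{1/2}$ without landing in Case~1. Reading Algorithm~\ref{alg:NCG}, when d\_type=SOL and $6\|d^t_k\|<(\epsilon_g/\sigma_t)^{1/2}$, the algorithm only breaks via the $j_t=0$ test $f(x^k+d^t_k)\le f(x^k)\wedge\|\nabla f(x^k+d^t_k)\|\le\epsilon_g$; otherwise it increases $\sigma_t$ and continues, so such a $t$ is never the terminating inner iteration unless Case~1 holds. Hence whenever $d^k$ is the \emph{accepted} SOL direction and Case~1 fails, the branch condition $6\|d^k\|\ge(\epsilon_g/\gamma_k)^{1/2}$ is in force and \eqref{sol-lwbd-thetaj} holds for $j=j_t$; the computation above then applies verbatim. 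Note this argument does not need $\gamma_k\ge\gamma_\nu(\epsilon_g)$, which is why the constant is the parameter-free $\hat c_{\mathrm{sol}}$ rather than $c_{\mathrm{sol}}$; well-definedness of the inner loop (that some SOL break actually occurs) is separately guaranteed by Theorem~\ref{thm:wd-ncg} together with Lemmas~\ref{lem:NCG-pid-next-FOSP} and \ref{lem:well-defined-pid-sol}.
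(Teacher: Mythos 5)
Your proof is correct and follows essentially the same route as the paper's: record the SOL properties from Lemma~\ref{lem:ppty-CG-1} with $(d^t_k,\sigma_t)=(d^k,\gamma_k)$, dispose of the trivial sub-case where the first branch set $j_t=0$, and then in the second branch use $6\|d^k\|\ge(\epsilon_g/\gamma_k)^{1/2}$ together with the guard \eqref{sol-lwbd-thetaj} and the sufficient-decrease condition \eqref{ls-sol-stepsize-2} to obtain the constant $\hat c_{\mathrm{sol}}$. The only cosmetic difference is that you split on which term of the $\min$ in \eqref{sol-lwbd-thetaj} is active, whereas the paper splits on $\alpha_k=1$ versus $\alpha_k<1$; the resulting algebra and bounds coincide.
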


\begin{proof}
Since $d^k$ results from Algorithm~\ref{alg:capped-CG} with d$\_$type=SOL, we observe from Algorithm~\ref{alg:NCG} and Lemma~\ref{lem:ppty-CG-1}(i) that $\|\nabla f(x^k)\|>\epsilon_g$  and  \eqref{SOL-ppty-1-}-\eqref{SOL-ppty-3-} hold with 
$(d_k^t,\sigma_t)$ replaced by $(d^k,\gamma_k)$. In addition, since the next iterate $x^{k+1}$ has already been generated, one can see from Algorithm~\ref{alg:capped-CG} that at least one of $\|\nabla f(x^{k+1})\|\le\epsilon_g$ and $6\|d^k\|\ge[\epsilon_g/\gamma_k]^{1/2}$ holds. Therefore, to prove this lemma, it suffices to show that  \eqref{sd-sol-ncg} holds if $\|\nabla f(x^{k+1})\|>\epsilon_g$. To this end, we suppose that $\|\nabla f(x^{k+1})\|>\epsilon_g$ holds through the remainder of the proof, which implies that $6\|d^k\|\ge[\epsilon_g/\gamma_k]^{1/2}$ holds. We next prove \eqref{sd-sol-ncg} by considering two separate cases below.

Case 1) $\alpha_k=1$. One can see from \eqref{ls-sol-stepsize-2} with $(d^t_k,\sigma_t,\theta^j)=(d^k,\gamma_k,1)$ that 
\begin{align*}
f(x^k + d^k) \le f(x^k) - \eta (\gamma_k\epsilon_g)^{1/2}\|d^k\|^2.
\end{align*}
Using this and $6\|d^k\|\ge[\epsilon_g/\gamma_k]^{1/2}$, we obtain that $f(x^k) - f(x^{k+1}) \ge \eta\epsilon_g^{3/2}/(36\gamma_k^{1/2})$, which together with the definition of $\hat{c}_{\mathrm{sol}}$ in \eqref{hat-csol-cnc} implies that \eqref{sd-sol-ncg} holds.

Case 2) $\alpha_k<1$. We can observe from Algorithm~\ref{alg:NCG} that $\alpha_k=\theta^{j_t}$, where $j_t$ satisfies \eqref{sol-lwbd-thetaj} and \eqref{ls-sol-stepsize-2} with $(d^t_k,\sigma_t)=(d^k,\gamma_k)$.  It then follows that
\begin{align*}
\alpha_k\ge 2(1-\eta)\theta(\epsilon_g/\gamma_k)^{1/4}/(3\|d^k\|^{1/2}),\quad  f(x^k+\alpha_k d^k) \le f(x^k) - \eta(\gamma_k \epsilon_g)^{1/2}\alpha_k^2\|d^k\|^2.   
\end{align*}
Using these inequalities, $x^{k+1}=x^k+\alpha_k d^k$, and $6\|d^k\|\ge [\epsilon_g/\gamma_k]^{1/2}$, we obtain that 
\[
f(x^k) - f(x^{k+1}) \ge \eta\left(\frac{2(1-\eta)\theta}{3}\right)^2\epsilon_g\|d^k\| \ge \frac{\eta}{6}\left(\frac{2(1-\eta)\theta}{3}\right)^2 \epsilon_g^{3/2}/\gamma_k^{1/2},
\]
which along with the definition of $\hat{c}_{\mathrm{sol}}$ in \eqref{hat-csol-cnc} implies that \eqref{sd-sol-ncg} holds.
\end{proof}

Our next lemma shows that when the search direction $d^k$ in Algorithm~\ref{alg:NCG} is of type `NC', the next iterate $x^{k+1}$ produces a sufficient decrease in $f$.

\begin{lemma}\label{suf-dec-nc1}
Suppose that Assumption~\ref{asp:NCG-cmplxity} holds and the direction $d^k$ results from Algorithm~\ref{alg:capped-CG} with d$\_$type=NC at some outer iteration $k$ of Algorithm~\ref{alg:NCG}. Let ${c}_{\mathrm{nc}}$ be defined as in \eqref{csol-cnc}. Then, the next iterate $x^{k+1} = x^k + \alpha_k d^k$ satisfies
\begin{equation}\label{sd-nc-ncg}
f(x^k) - f(x^{k+1}) \ge {c}_{\mathrm{nc}} \epsilon_g^{3/2}/\gamma_k^{1/2}.
\end{equation}
\end{lemma}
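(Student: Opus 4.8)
The plan is to mirror the structure of the analysis of the negative-curvature line search in the known-parameter setting (Lemma~\ref{lem:nc-1}), but now working with the adaptively chosen $\gamma_k = \sigma_{t_k}$ instead of $\gamma_\nu(\epsilon_g)$, using the fact that Algorithm~\ref{alg:NCG} has already terminated its inner loop so that the step length $\alpha_k = \theta^{j_t}$ satisfies both \eqref{nc-lwbd-thetaj} and \eqref{ls-nc-stepsize-2} with $(d_k^t,\sigma_t)$ replaced by $(d^k,\gamma_k)$. First I would record, from Lemma~\ref{lem:ppty-CG-1}(ii) applied with $(d_k^t,\sigma_t)=(d^k,\gamma_k)$, that $\nabla f(x^k)^T d^k \le 0$ and
\[
(d^k)^T\nabla^2 f(x^k)d^k/\|d^k\|^2 = -\min\{1,\gamma_k\}\|d^k\| \le -(\gamma_k\epsilon_g)^{1/2},
\]
which in particular gives the lower bound $\|d^k\| \ge \max\{1,1/\gamma_k\}(\gamma_k\epsilon_g)^{1/2} > 0$, exactly as in \eqref{12-nc}.

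Next I would use the inner-loop exit conditions directly: since the loop broke at inner iteration $t_k$ with d$\_$type=NC, the chosen $j_t$ satisfies \eqref{ls-nc-stepsize-2}, i.e.
\[
f(x^k + \alpha_k d^k) \le f(x^k) - \eta\min\{1,\gamma_k\}\alpha_k^2\|d^k\|^3/4,
\]
and \eqref{nc-lwbd-thetaj}, i.e. $\theta^{j_t-1}\ge \min\{1,1/\gamma_k\}$, which since $\theta\in(0,1)$ yields $\alpha_k = \theta^{j_t}\ge \theta\min\{1,1/\gamma_k\}$. Substituting this lower bound on $\alpha_k$ and the lower bound $\|d^k\|\ge \max\{1,1/\gamma_k\}(\gamma_k\epsilon_g)^{1/2}$ into the sufficient-decrease inequality, I would compute
\[
f(x^k)-f(x^{k+1}) \ge \tfrac{\eta}{4}\theta^2 \min\{1,\gamma_k\}\min\{1,1/\gamma_k\}^2\max\{1,1/\gamma_k\}^3 (\gamma_k\epsilon_g)^{3/2},
\]
and then observe that the product $\min\{1,\gamma_k\}\min\{1,1/\gamma_k\}^2\max\{1,1/\gamma_k\}^3$ collapses: if $\gamma_k\ge 1$ it equals $1\cdot 1 \cdot 1 = 1$, while if $\gamma_k<1$ it equals $\gamma_k \cdot \gamma_k^{-2}\cdot \gamma_k^{-3} = \gamma_k^{-3}$ — so in both cases the whole expression, once multiplied by $(\gamma_k\epsilon_g)^{3/2} = \gamma_k^{3/2}\epsilon_g^{3/2}$, reduces to $\epsilon_g^{3/2}/\gamma_k^{1/2}$. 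Hence $f(x^k)-f(x^{k+1})\ge (\eta\theta^2/4)\epsilon_g^{3/2}/\gamma_k^{1/2} = c_{\mathrm{nc}}\epsilon_g^{3/2}/\gamma_k^{1/2}$ by the definition \eqref{csol-cnc} of $c_{\mathrm{nc}}$, which is \eqref{sd-nc-ncg}.

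This argument is essentially bookkeeping; there is no real obstacle, since the delicate work — showing that the inner loop actually terminates with a $j_t$ satisfying \eqref{nc-lwbd-thetaj} and \eqref{ls-nc-stepsize-2} — was already done in Lemma~\ref{lem:well-defined-pid-nc} and in the proof of Theorem~\ref{thm:wd-ncg}, and here $\gamma_k$ plays exactly the role $\gamma_\nu(\epsilon_g)$ played in Lemma~\ref{lem:nc-1}. The only point requiring a moment's care is the case analysis $\gamma_k\gtrless 1$ in the algebraic simplification of the min/max product, but it is routine. I would also note, for completeness, that $f$ is descent along the iterates of Algorithm~\ref{alg:NCG}, so that the bound is consistent with $f(x^k)\le f(x^0)$, though this fact is not actually needed in this particular lemma.
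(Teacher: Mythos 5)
Your overall approach matches the paper's exactly: use Lemma~\ref{lem:ppty-CG-1}(ii) with $(d_k^t,\sigma_t)=(d^k,\gamma_k)$ to get $\|d^k\|\ge\max\{1,1/\gamma_k\}(\gamma_k\epsilon_g)^{1/2}$, use \eqref{nc-lwbd-thetaj} to get $\alpha_k\ge\theta\min\{1,1/\gamma_k\}$, plug both into \eqref{ls-nc-stepsize-2}, and simplify the resulting product of min/max terms. However, the explicit case analysis you give for that product is wrong in both cases, and the errors are large enough that the chain of reasoning, as written, is internally inconsistent with your own claimed conclusion. For $\gamma_k\ge 1$ you have $\min\{1,1/\gamma_k\}=1/\gamma_k$ (not $1$), so the product
$\min\{1,\gamma_k\}\min\{1,1/\gamma_k\}^2\max\{1,1/\gamma_k\}^3$
equals $1\cdot\gamma_k^{-2}\cdot 1=\gamma_k^{-2}$, not $1$. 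For $\gamma_k<1$ you have $\min\{1,1/\gamma_k\}=1$ (not $\gamma_k^{-1}$), so the product equals $\gamma_k\cdot 1\cdot\gamma_k^{-3}=\gamma_k^{-2}$, not $\gamma_k^{-3}$. Note in particular that the values $1$ and $\gamma_k^{-3}$ you report do not, when multiplied by $\gamma_k^{3/2}\epsilon_g^{3/2}$, produce $\epsilon_g^{3/2}/\gamma_k^{1/2}$ as you then assert. The correct observation is that the product equals $\gamma_k^{-2}$ in \emph{both} cases (e.g. by grouping $\min\{1,\gamma_k\}\max\{1,1/\gamma_k\}=1$ and $\min\{1,1/\gamma_k\}\max\{1,1/\gamma_k\}=1/\gamma_k$), and $\gamma_k^{-2}\cdot\gamma_k^{3/2}\epsilon_g^{3/2}=\epsilon_g^{3/2}/\gamma_k^{1/2}$ gives the desired bound. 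With that correction the proof is sound and coincides with the paper's.
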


\begin{proof}
Since $d^k$ is associated with d$\_$type=NC, we observe from Lemma~\ref{lem:ppty-CG-1}(ii) with $(d^t_k,\sigma_t)=(d^k,\gamma_k)$ that 
\begin{align}\label{lwbd-nc-dk-gammak}
\|d^k\| \ge \max\{1,1/\gamma_k\}(\gamma_k \epsilon_g)^{1/2}.    
\end{align}
In addition, we observe from Algorithm~\ref{alg:NCG} that $\alpha_k=\theta^{j_t}$, where $j_t$ satisfies \eqref{nc-lwbd-thetaj} and \eqref{ls-nc-stepsize-2} with $(d^t_k,\sigma_t)=(d^k,\gamma_k)$. By these and \eqref{lwbd-nc-dk-gammak}, one has 
\begin{align*}
f(x^k) - f(x^{k+1}) & > \frac{\eta}{4}\min\{1,\gamma_k\}\alpha_k^2\|d^k\|^2 \\
&\ge \frac{\eta}{4} \min\{1,\gamma_k\} (\theta\min\{1,1/\gamma_k\})^2 (\max\{1,1/\gamma_k\})^3(\gamma_k\epsilon_g)^{3/2} = \frac{\eta\theta^2}{4}\epsilon_g^{3/2}/\gamma_k^{1/2},
\end{align*}
which along with the definition of ${c}_{\mathrm{nc}}$ in \eqref{csol-cnc} implies that \eqref{sd-nc-ncg} holds.
\end{proof}

We are now ready to prove Theorem~\ref{thm:c-ncg}.

\begin{proof}[Proof of Theorem~\ref{thm:c-ncg}]
(i) We first show that the number of outer iterations of Algorithm~\ref{alg:NCG} is at most $\overline{K}_1$. Suppose for contradiction that the number of its outer iterations is more than $\overline{K}_1$. By Lemmas~\ref{suf-dec-sol1} and \ref{suf-dec-nc1} and the assumption that  $\epsilon_H\in(0,1)$ is not provided, we observe that in each outer iteration of Algorithm~\ref{alg:NCG}, except for the last one, the function $f$ is reduced at least by $\min\{\hat{c}_{\mathrm{sol}},{c}_{\mathrm{nc}}\}\epsilon_g^{3/2}/\gamma_k^{1/2}$. Using this, \eqref{lwbd-Hgupbd},  and $\gamma_k\le\sigma(\epsilon_g)$ (see Theorem~\ref{thm:wd-ncg}), we have
\begin{align*}
\overline{K}_1 \min\{\hat{c}_{\mathrm{sol}},{c}_{\mathrm{nc}}\}\epsilon_g^{3/2}/\sigma(\epsilon_g)^{1/2}&\le \overline{K}_1 \min\{\hat{c}_{\mathrm{sol}},{c}_{\mathrm{nc}}\}\epsilon_g^{3/2}/\gamma_k^{1/2}\le\sum_{k=0}^{\overline{K}_1-1} [f(x^k) - f(x^{k+1})] \\
&=f(x^0) - f(x^{\overline{K}_1}) \le f(x^0) - \fl,
\end{align*} 
which contradicts the definition of $\overline{K}_1$ in \eqref{hat-K1}.   Hence, the number of outer iterations of Algorithm~\ref{alg:NCG} is at most $\overline{K}_1$. 

Recall from above that the number of outer iterations of Algorithm~\ref{alg:NCG} is at most $\overline{K}_1$.  Using this and  Theorem~\ref{thm:wd-ncg}, we see that the total number of calls of Algorithm~\ref{alg:capped-CG} in Algorithm~\ref{alg:NCG} is at most $T + 2\overline{K}_1$. This along with the fact that Algorithm~\ref{alg:capped-CG} is called once at each inner iteration of Algorithm~\ref{alg:NCG} implies that the total number of inner iterations of Algorithm~\ref{alg:NCG} is at most $T + 2\overline{K}_1$. In addition, relations~\eqref{order-pf-KP1} and \eqref{order-pf-TKP1} follow from \eqref{gma-eps}, \eqref{csol-cnc}, \eqref{inner-bd}, \eqref{hat-csol-cnc}, and  \eqref{hat-K1}. Since $\epsilon_H$ is not provided, one can observe that the output $x^k$ of Algorithm~\ref{alg:NCG} satisfies $\|\nabla f(x^k)\|\le\epsilon_g$. This completes the proof of statement (i) of Theorem~\ref{thm:c-ncg}.

(ii)  Recall from above that the number of outer iterations of Algorithm~\ref{alg:NCG} is at most $\overline{K}_1$. Suppose that Algorithm~\ref{alg:NCG} terminates at some outer iteration $K^\prime$ with $K^\prime< \overline{K}_1$. Notice from Lemmas~\ref{suf-dec-sol1} and \ref{suf-dec-nc1} that each outer iteration of Algorithm~\ref{alg:NCG}, except for the last one, results in a reduction on $f$ at least by $\min\{\hat{c}_{\mathrm{sol}},{c}_{\mathrm{nc}}\}\epsilon_g^{3/2}/\gamma_k^{1/2}$. Hence,
\[
\sum_{k=0}^{K^\prime-1}\min\{\hat{c}_{\mathrm{sol}},{c}_{\mathrm{nc}}\}\epsilon_g^{3/2}/\gamma_k^{1/2}\le\sum_{k=0}^{K^\prime-1}(f(x^k)-f(x^{k+1}))=f(x^0) - f(x^{K^\prime})\le f(x^0) -\fl,
\]
where the last inequality folllows from \eqref{lwbd-Hgupbd}. 
Rearranging the terms of this inequality, we obtain that
\beq\label{upbd-1/4}
\sum_{k=0}^{K^\prime-1}1/\gamma_k^{1/2} \le (f(x^0) -\fl) \epsilon_g^{-3/2}/\min\{\hat{c}_{\mathrm{sol}},{c}_{\mathrm{nc}}\}.
\eeq
In addition, notice that $f$ is descent along the iterates generated by Algorithm~\ref{alg:NCG}, which implies $f(x^k) \le f(x^0)$ for all $0 \leq k < K^\prime$. It then follows from \eqref{lwbd-Hgupbd} that $\|\nabla^2 f(x^k)\|\le U_H$ for all $0 \leq k < K^\prime$. 
By Theorem~\ref{lem:capped-CG} with $(H,\varepsilon)=(\nabla^2 f(x^k),(\sigma_t\epsilon_g)^{1/2})$ and $\|\nabla^2 f(x^k)\|\le U_H$, we obtain that in the $k$th outer iteration of Algorithm~\ref{alg:NCG}, the number of gradient evaluations and Hessian-vector products required by the call of Algorithm~\ref{alg:capped-CG} at its $t$th inner iteration is at most
\[
\min\Big\{n,\big\lceil(U_H^{1/2}/(\sigma_t\epsilon_g)^{1/4}+2)\psi(U_H/(\sigma_t\epsilon_g)^{1/2})\big\rceil\Big\},
\]
where $\psi$ is given in Theorem~\ref{lem:capped-CG}. Further, by $\sigma_t\ge \sigma_0=\max\{\gamma_{-1},\gamma_{k-1}/r\}$ and the monotonicity of $\psi$, one can see that the above quantity is bounded above by 
\[
\min\Big\{n,\big\lceil(U_H^{1/2}/(\gamma_{k-1}\epsilon_g/r)^{1/4}+2)\psi(U_H/(\gamma_{-1}\epsilon_g)^{1/2})\big\rceil\Big\}.
\]
Let $\tau_k$ denote  the number of calls of Algorithm~\ref{alg:capped-CG} (i.e., the number of inner iterations) in the $k$th outer iteration of Algorithm~\ref{alg:NCG}. It follows from statement (i) of Theorem~\ref{thm:c-ncg} that $\sum_{k=0}^{K^\prime-1} \tau_k\leq T + 2\overline{K}_1$, where $\overline{K}_1$ is given in \eqref{hat-K1}. Also, recall from Theorem~\ref{thm:wd-ncg} that $\tau_k\le T$ for all $0\le k\le K^\prime-1$. Based on these observations, we obtain that the total number of gradient evaluations and Hessian-vector products of $f$ required by the calls of Algorithm~\ref{alg:capped-CG} in Algorithm~\ref{alg:NCG} is bounded by
\begin{align*}
&\sum_{k=0}^{K^\prime-1}\tau_k \min\Big\{n,\big\lceil(U_H^{1/2}/(\gamma_{k-1}\epsilon_g/r)^{1/4}+2)\psi(U_H/(\gamma_{-1}\epsilon_g)^{1/2})\big\rceil\Big\} \nn\\
&\le \min\Big\{n \sum_{k=0}^{K^\prime-1}\tau_k,\sum_{k=0}^{K^\prime-1} \big([U_H^{1/2}/(\gamma_{k-1}\epsilon_g/r)^{1/4}+2]\psi(U_H/(\gamma_{-1}\epsilon_g)^{1/2})+1\big)\tau_k\Big\}\nn\\
&= \min\Big\{n \sum_{k=0}^{K^\prime-1}\tau_k,U_H^{1/2}(r/\epsilon_g)^{1/4}\psi(U_H/(\gamma_{-1}\epsilon_g)^{1/2})\sum_{k=0}^{K^\prime-1} \gamma_{k-1}^{-1/4}\tau_k + 2\big(\psi(U_H/(\gamma_{-1}\epsilon_g)^{1/2})+1\big)\sum_{k=0}^{K^\prime-1} \tau_k\Big\}\nn\\
&\leq \min\Bigg\{n \sum_{k=0}^{K^\prime-1}\tau_k,U_H^{1/2}(r/\epsilon_g)^{1/4}\psi(U_H/(\gamma_{-1}\epsilon_g)^{1/2})
T\sqrt{\overline{K}_1\sum_{k=0}^{K^\prime-1} \gamma_{k-1}^{-1/2}} + 2\big(\psi(U_H/(\gamma_{-1}\epsilon_g)^{1/2})+1\big)\sum_{k=0}^{K^\prime-1} \tau_k\Bigg\}\nn\\
&\leq \min\Big\{n (T + 2\overline{K}_1),U_H^{1/2}(r/\epsilon_g)^{1/4}\psi(U_H/(\gamma_{-1}\epsilon_g)^{1/2})T
\sqrt{\overline{K}_1\big((f(x^0) -\fl) \epsilon_g^{-3/2}/\min\{\hat{c}_{\mathrm{sol}},{c}_{\mathrm{nc}}\}+\gamma_{-1}^{-1/2}\big)}\nn \\ 
&\qquad \quad+ 2\big(\psi(U_H/(\gamma_{-1}\epsilon_g)^{1/2})+1\big)(T + 2\overline{K}_1)\Big\},\nn \\
&=\widetilde{\cO}\Big(\min\big\{n, U_H^{1/2}/(H_\nu^{1/(2+2\nu)}\epsilon_g^{-\nu/(2+2\nu)})\big\}H_\nu^{1/(1+\nu)}\epsilon_g^{-(2+\nu)/(1+\nu)}\Big),
\end{align*}
where the first inequality is due to $\min\{a_1,b_1\}+\min\{a_2,b_2\}\le\min\{a_1+a_2,b_1+b_2\}$ for all $a_1,a_2,b_1,b_2\in\bR$, the second inequality follows from Cauchy-Schwarz inequality and $(\sum_{k=0}^{K^\prime-1} \tau_k^2)^{1/2} \leq T({K}^\prime)^{1/2} < T \overline{K}_1^{1/2}$ because $\tau_k\le T$ for all $0\le k\le K^\prime-1$ and $K^\prime<\overline{K}_1$,
the last inequality is due to $\sum_{k=0}^{K^\prime-1} \tau_k\leq T + 2\overline{K}_1$ and \eqref{upbd-1/4}, and the last equality follows from \eqref{order-pf-KP1} and \eqref{order-pf-TKP1} and the definition of $\psi$ in Theorem \ref{lem:capped-CG}.  Hence, statement (ii) of Theorem~\ref{thm:c-ncg} holds.
\end{proof}

The following lemma shows that when the search direction $d^k$ in Algorithm~\ref{alg:NCG} is a negative curvature direction returned from Algorithm~\ref{pro:meo}, the next iterate $x^{k+1}$ produces a sufficient reduction on $f$. Its proof is identical to that of Lemma~\ref{lem:meo-dec}, and thus is omitted here.

\begin{lemma}\label{lem:meo-dec1}
Suppose that Assumption~\ref{asp:NCG-cmplxity} holds with $H_\nu>0$ and $\nu\in(0,1]$, and $d^k$ results from Algorithm~\ref{pro:meo} at some outer iteration $k$ of Algorithm~\ref{alg:NCG}. Let $c_{\mathrm{meo}}$ be defined in \eqref{cmeo}. Then the following statements hold.
\begin{enumerate}[{\rm (i)}]
\item The step length $\alpha_k$ is well defined, and $\alpha_k\ge\min\big\{1,\theta((1-\eta)/H_\nu)^{1/\nu}(\epsilon_H/2)^{(1-\nu)/\nu}\big\}$.
\item The next iterate $x^{k+1}=x^k+\alpha_k d^k$ satisfies $f(x^k) - f(x^{k+1}) \ge c_{\mathrm{meo}} \epsilon_H^{(2+\nu)/\nu}$.
\end{enumerate}
\end{lemma}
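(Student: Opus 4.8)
The plan is to reproduce the proof of Lemma~\ref{lem:meo-dec} almost verbatim, since the branch of Algorithm~\ref{alg:NCG} that invokes Algorithm~\ref{pro:meo} coincides structurally with the corresponding branch of Algorithm~\ref{alg:NCG-pd}: the oracle receives the same inputs $H=\nabla^2 f(x^k)$ and $\varepsilon=\epsilon_H$, the negative curvature step is built by the same formula $d^k=-\mathrm{sgn}(v^T\nabla f(x^k))\,|v^T\nabla^2 f(x^k)v|\,v$ from a unit vector $v$ with $v^T\nabla^2 f(x^k)v\le-\epsilon_H/2$, and the line search uses exactly the test $f(x^k+\theta^j d^k)\le f(x^k)-\eta\theta^{2j}\|d^k\|^3/2$ appearing in \eqref{ls-meo-pd}. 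First I would record the two preliminary facts used throughout: (a) $f$ is nonincreasing along the iterates of Algorithm~\ref{alg:NCG}, so $f(x^k)\le f(x^0)$ and hence $x^k\in\scL_f(x^0)\subseteq\Omega$ (this is where Lemmas~\ref{suf-dec-sol1} and \ref{suf-dec-nc1} together with part (ii) below feed in, and it is the one place the argument differs from that of Lemma~\ref{lem:meo-dec}); and (b) from the defining formula for $d^k$ one gets $\nabla f(x^k)^Td^k\le0$ and $(d^k)^T\nabla^2 f(x^k)d^k/\|d^k\|^2=-\|d^k\|=v^T\nabla^2 f(x^k)v\le-\epsilon_H/2<0$, so in particular $\|d^k\|\ge\epsilon_H/2$.

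For statement (i), I would argue as in Lemma~\ref{lem:meo-dec}. If the line search test holds at $j=0$, then $\alpha_k=1$ and the bound is trivial; otherwise the claim is that every $j\ge0$ violating the test obeys $\theta^j\ge((1-\eta)/H_\nu)^{1/\nu}(\epsilon_H/2)^{(1-\nu)/\nu}$, proved by two cases. In the case $f(x^k+\theta^j d^k)>f(x^k)$: set $\varphi(\alpha)=f(x^k+\alpha d^k)$, observe $\varphi'(0)\le0$ and $\varphi''(0)<0$, deduce the existence of an interior local minimizer $\alpha_*\in(0,\theta^j)$ with $\varphi(\alpha_*)<\varphi(0)$ (hence $f(x^k+\alpha_* d^k)\le f(x^0)$, so \eqref{F-Hess-Lip} applies at $x=x^k$, $y=x^k+\alpha_* d^k$) and $\varphi''(\alpha_*)\ge0$; combining the second-order optimality condition with \eqref{F-Hess-Lip} gives $H_\nu\alpha_*^\nu\|d^k\|^{2+\nu}\ge-(d^k)^T\nabla^2 f(x^k)d^k=\|d^k\|^3$, and $\theta^j>\alpha_*$ together with $\|d^k\|\ge\epsilon_H/2$ and $\nu\in(0,1]$ yields the claimed bound. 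In the case $f(x^k+\theta^j d^k)\le f(x^k)$: use \eqref{desc-ineq} at $x=x^k$, $y=x^k+\theta^j d^k$, substitute the sign and curvature properties of $d^k$, cancel $\|d^k\|$-powers, and again invoke $\|d^k\|\ge\epsilon_H/2$ and $\nu\in(0,1]$. Finally, since $j=j_k-1$ violates the test, the displayed bound at $j=j_k-1$ gives $\alpha_k=\theta^{j_k}\ge\theta((1-\eta)/H_\nu)^{1/\nu}(\epsilon_H/2)^{(1-\nu)/\nu}$, whence $\alpha_k\ge\min\{1,\theta((1-\eta)/H_\nu)^{1/\nu}(\epsilon_H/2)^{(1-\nu)/\nu}\}$.

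For statement (ii), combine the line search decrease $f(x^k)-f(x^{k+1})\ge(\eta/2)\alpha_k^2\|d^k\|^3$ with $\|d^k\|\ge\epsilon_H/2$ and the lower bound on $\alpha_k$ from (i): this gives $f(x^k)-f(x^{k+1})\ge(\eta/2)\min\{1,\theta((1-\eta)/H_\nu)^{1/\nu}(\epsilon_H/2)^{(1-\nu)/\nu}\}^2(\epsilon_H/2)^3$; rewriting the right-hand side as $(\eta/2)\min\{(\epsilon_H/2)^{-(1-\nu)/\nu},\theta((1-\eta)/H_\nu)^{1/\nu}\}^2(\epsilon_H/2)^{(2+\nu)/\nu}$ and using $\epsilon_H\in(0,1)$ and $\nu\in(0,1]$ to replace the first entry of the min by $1$, then matching with the definition of $c_{\mathrm{meo}}$ in \eqref{cmeo}, yields $f(x^k)-f(x^{k+1})\ge c_{\mathrm{meo}}\epsilon_H^{(2+\nu)/\nu}$. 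I expect the only genuinely delicate step to be the interior-local-minimizer argument in Case 1 of part (i) — the point where one must legitimately produce $\alpha_*\in(0,\theta^j)$ with $\varphi(\alpha_*)<\varphi(0)$ and invoke the second-order optimality condition together with Hölder continuity — while the rest is routine substitution; the only bookkeeping subtlety that keeps this from being a literal quotation of Lemma~\ref{lem:meo-dec} is verifying, via Lemmas~\ref{suf-dec-sol1} and \ref{suf-dec-nc1} and part (ii), that $f$ remains nonincreasing along the iterates of Algorithm~\ref{alg:NCG} so that \eqref{F-Hess-Lip} and \eqref{desc-ineq} may be applied where needed.
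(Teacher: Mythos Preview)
Your proposal is correct and matches the paper exactly: the paper itself omits the proof and simply states that it is identical to that of Lemma~\ref{lem:meo-dec}, which is precisely what you reproduce. One small remark: your worry that the descent property $f(x^k)\le f(x^0)$ requires invoking Lemmas~\ref{suf-dec-sol1}, \ref{suf-dec-nc1}, and part (ii) ``below'' is unnecessary---the monotonicity of $f$ along the iterates of Algorithm~\ref{alg:NCG} follows directly from the acceptance conditions in the algorithm (each accepted step satisfies $f(x^{k+1})\le f(x^k)$ by construction), so there is no circularity and the argument really is verbatim that of Lemma~\ref{lem:meo-dec}.
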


We are now ready to prove Theorem~\ref{thm:c-ncg-sosp}.

\begin{proof}[Proof of Theorem~\ref{thm:c-ncg-sosp}]
(i) Let $K_2$ and $\overline{K}_1$  be defined in \eqref{K2} and \eqref{hat-K1}, respectively.  Observe from  Algorithm~\ref{alg:NCG} and Lemma~\ref{lem:meo-dec1}(ii) that each call of Algorithm~\ref{pro:meo}, except the last one, results in a reduction on $f$ at least by $c_{\mathrm{meo}}\epsilon_H^{(2+\nu)/\nu}$. By this and similar arguments as used in the proof of Theorem \ref{thm:c-pd-sosp}(i), one can claim that the total number of calls of Algorithm~\ref{pro:meo} in Algorithm~\ref{alg:NCG} is at most $K_2$. 

In addition, we claim that the total number of calls of Algorithm~\ref{alg:capped-CG} in Algorithm~\ref{alg:NCG} is at most $\overline{K}_1+K_2-1$. Indeed, suppose for contradiction that its total number of calls is more than $\overline{K}_1+K_2-1$. Notice that if Algorithm~\ref{alg:capped-CG} is called at some iteration $k$ and generates $x^{k+1}$ satisfying $\|\nabla f(x^{k+1})\| \le \epsilon_g$, then Algorithm~\ref{pro:meo} must be called at the iteration $k+1$. In view of this and the fact that the total number of calls of Algorithm~\ref{pro:meo} is at most $K_2$, one can observe that the total number of such iterations is at most $K_2$. This along with the above supposition implies that the total number of iterations $k$ of Algorithm~\ref{alg:NCG} at which Algorithm~\ref{alg:capped-CG} is called and generates the next iterate $x^{k+1}$ satisfying $\|\nabla f(x^{k+1})\|>\epsilon_g$ is at least $\overline{K}_1$. For each of these iterations $k$, we observe from Lemmas~\ref{lem:sol-1}(ii) and \ref{lem:nc-1}(ii) and Theorem~\ref{thm:wd-ncg} that 
\[
f(x^k) - f(x^{k+1}) \ge \min\{\hat{c}_{\mathrm{sol}},{c}_{\mathrm{nc}}\}\epsilon_g^{3/2}/\gamma_k^{1/2} \geq  \min\{\hat{c}_{\mathrm{sol}},{c}_{\mathrm{nc}}\}\epsilon_g^{3/2}/\sigma(\epsilon_g)^{1/2}.
\]  
Since $f$ is descent along the iterates of Algorithm~\ref{alg:NCG} and $f(x^k) \geq \fl$,  the total amount of reduction on $f$ resulting from these iterations $k$ is at most $f(x^0) - \fl$. It then follows that
\begin{align*}
\overline{K}_1 \min\{c_{\mathrm{sol}},c_{\mathrm{nc}}\}\epsilon_g^{3/2}/\gamma_\nu(\epsilon_g)^{1/2} \le f(x^0) - \fl,     
\end{align*}
which contradicts the definition of $\overline{K}_1$ given in \eqref{hat-K1}. Hence, the total number of calls of Algorithm~\ref{alg:capped-CG} in Algorithm~\ref{alg:NCG} is at most $\overline{K}_1+K_2-1$. 

Based on the above claims and the fact that either Algorithm~\ref{alg:capped-CG} or \ref{pro:meo} is called at each iteration of Algorithm~\ref{alg:NCG}, we conclude that the total number of iterations of Algorithm~\ref{alg:NCG} is at most $\overline{K}_1+2K_2-1$.  Using this and  Theorem~\ref{thm:wd-ncg}, we see that the total number of calls of Algorithm~\ref{alg:capped-CG} in Algorithm~\ref{alg:NCG} is at most $T + 2\overline{K}_1+ 4K_2 - 2$. This along with the fact that Algorithm~\ref{alg:capped-CG} is called once at each inner iteration of Algorithm~\ref{alg:NCG} implies that the total number of inner iterations of Algorithm~\ref{alg:NCG} is at most $T + 2\overline{K}_1+ 4K_2 - 2$. In addition, relations~\eqref{outer-iter-ncg-pf} and \eqref{inner-iter-ncg-pf} follow from \eqref{gma-eps}, \eqref{csol-cnc},  \eqref{cmeo}, \eqref{K2}, \eqref{inner-bd},  \eqref{hat-csol-cnc}, and \eqref{hat-K1}. Moreover, one can observe that the output $x^k$ of Algorithm~\ref{alg:NCG} satisfies $\|\nabla f(x^k)\|\le\epsilon_g$ deterministically and $\lambda_{\min}(\nabla^2 f(x^k))\ge-\epsilon_H$ with probability at least $1-\delta$ for some $0\le k\le \overline{K}_1+ 2K_2 -1$, where the latter part is due to Algorithm~\ref{pro:meo}. This completes the proof of statement (i) of Theorem~\ref{thm:c-ncg-sosp}.

(ii) Notice that $f$ is descent along the iterates generated by Algorithm~\ref{alg:NCG}, which implies $f(x^k) \le f(x^0)$ for each iteration $k$. It then follows from \eqref{lwbd-Hgupbd} that $\|\nabla^2 f(x^k)\|\le U_H$ for each iteration $k$. Recall from the above proof that the total number of iterations of Algorithm~\ref{alg:NCG} is at most $\overline{K}_1+2K_2-1$ and the total number of calls of Algorithm~\ref{alg:capped-CG} in Algorithm~\ref{alg:NCG} is at most $T + 2\overline{K}_1+ 4K_2 - 2$. In view of Theorem~\ref{lem:capped-CG} with $(H,\varepsilon)=(\nabla^2 f(x^k),(\sigma_t\epsilon_g)^{1/2})$ and the fact that $\|\nabla^2 f(x^k)\|\le U_H$ and $\sigma_t\ge\gamma_{-1}$, one can observe that the number of gradient evaluations and Hessian-vector products of $f$ required by each call of Algorithm~\ref{alg:capped-CG} with input $U=0$ is at most $\widetilde{\cO}(\min\{n,U_H^{1/2}/\epsilon_g^{1/4}\})$. Using these, we obtain that the total number of gradient evaluations and Hessian-vector products of $f$ required by the calls of Algorithm~\ref{alg:capped-CG} in Algorithm~\ref{alg:NCG} is bounded by
\begin{align}
\widetilde{\cO}((T + \overline{K}_1+ K_2)\min\{n,U_H^{1/2}/\epsilon_g^{1/4}\}). \label{upbd-oper}
\end{align}
In addition, by Theorem~\ref{rand-Lanczos} with $(H,\varepsilon)=(\nabla^2 f(x^k),\epsilon_H)$, $\|\nabla^2 f(x^k)\|\le U_H$ and the fact that each iteration of the Lanczos method requires only one Hessian-vector product of $f$, one can observe that the number of Hessian-vector products required by each call of Algorithm~\ref{pro:meo} in Algorithm~\ref{alg:NCG} is at most $\widetilde{\cO}(\min\{n,(U_H/\epsilon_H)^{1/2}\})$. Recall from the above proof that the total number of calls of Algorithm~\ref{pro:meo} in Algorithm~\ref{alg:NCG} is at most $K_2$. Hence, the total number of Hessian-vector products required by all calls of Algorithm~\ref{pro:meo} in Algorithm~\ref{alg:NCG} is at most $\widetilde{\cO}(K_2\min\{n,(U_H/\epsilon_H)^{1/2}\})$. Using this, \eqref{inner-iter-ncg-pf}, and \eqref{upbd-oper}, 
we see that statement (ii) of Theorem~\ref{thm:c-ncg-sosp} holds.
\end{proof}



\subsubsection*{Acknowledgments}
The first author was partially supported by the Wallenberg AI, Autonomous Systems and Software Program (WASP) funded by the Knut and Alice Wallenberg Foundation. The second author was partially supported by National Science Foundation Award IIS-2347592. The third author was partially supported by the National Science Foundation Award IIS-2211491, the Office of Naval Research Award N00014-24-1-2702, and the Air Force Office of Scientific Research Award FA9550-24-1-0343.

\bibliographystyle{abbrv}
\bibliography{references}

\section*{Appendix}

\appendix

\section{A capped conjugate gradient method}\label{appendix:capped-CG}

We present a capped CG method in Algorithm \ref{alg:capped-CG}, which was  proposed in \cite[Algorithm~1]{RNW18} for finding either an approximate solution to the linear system \eqref{indef-sys} or a sufficiently negative curvature direction of the associated coefficient matrix. Its details can be found in \cite[Section~3.1]{RNW18}.

\begin{algorithm}[h]
\caption{A capped conjugate gradient method}
\label{alg:capped-CG}
{\footnotesize
\begin{algorithmic}
\State \noindent\textit{Inputs}: symmetric matrix $H\in\bR^{n\times n}$, vector $g\neq0$, damping parameter $\varepsilon>0$, desired relative accuracy $\zeta\in(0,1)$.
\State \textit{Optional input:} scalar $U\ge0$ (set to $0$ if not provided).
\State \textit{Outputs:} d$\_$type, $d$.
\State \textit{Secondary outputs:} final values of $U,\kappa,\hat{\zeta},\tau,$ and $T$.
\State Set
\begin{equation*}
\bar{H}:=H+2\varepsilon I,\quad \kappa:=\frac{U+2\varepsilon}{\varepsilon},\quad\hat{\zeta}:=\frac{\zeta}{3\kappa},\quad\tau:=\frac{\sqrt{\kappa}}{\sqrt{\kappa}+1},\quad T:=\frac{4\kappa^4}{(1-\sqrt{\tau})^2},
\end{equation*}
$y^0\leftarrow 0,r^0\leftarrow g,p^0\leftarrow -g, j\leftarrow 0$.
\If {$(p^0)^T \bar{H}p^0<\varepsilon\|p^0\|^2$}
\State Set $d\leftarrow p^0$ and terminate with d$\_$type = NC;
			\ElsIf {\ $\|Hp^0\|>U\|p^0\|\ $}
			\State Set $U\leftarrow\|Hp^0\|/\|p^0\|$ and update $\kappa,\hat{\zeta},\tau, T$ accordingly;
			\EndIf
			\While{TRUE}
			\State $\alpha_j\leftarrow (r^j)^T r^j/(p^j)^T\bar{H}p^j$; \{Begin Standard CG Operations\}
			\State $y^{j+1}\leftarrow y^j+\alpha_jp^j$;
			\State $r^{j+1}\leftarrow r^j+\alpha_j\bar{H}p^j$;
			\State $\beta_{j+1}\leftarrow\|r^{j+1}\|^2/\|r^j\|^2$;
			\State $p^{j+1}\leftarrow-r^{j+1}+\beta_{j+1}p^j$; \{End Standard CG Operations\}
			\State $j\leftarrow j+1$;
			\If {$\|Hp^j\|>U\|p^j\|$}
			\State Set $U\leftarrow\|Hp^j\|/\|p^j\|$ and update $\kappa,\hat{\zeta},\tau,T$ accordingly;
			\EndIf
			\If {\ $\|Hy^j\|>U\|y^j\|\ $}
			\State Set $U\leftarrow\|Hy^j\|/\|y^j\|$ and update $\kappa,\hat{\zeta},\tau,T$ accordingly;
			\EndIf
			\If {\ $\|Hr^j\|>U\|r^j\|\ $}
			\State Set $U\leftarrow\|Hr^j\|/\|r^j\|$ and update $\kappa,\hat{\zeta},\tau,T$ accordingly;
			\EndIf
			\If {$(y^j)^T\bar{H}y^j<\varepsilon\|y^j\|^2$}
			\State Set $d\leftarrow y^j$ and terminate with d$\_$type = NC;
			\ElsIf {\ $\|r^j\|\le\hat{\zeta}\|r^0\|$}
			\State Set $d\leftarrow y^j$ and terminate with d$\_$type = SOL;
			\ElsIf{\ $(p^j)^T\bar{H}p^j<\varepsilon\|p^j\|^2$}
			\State Set $d\leftarrow p^j$ and terminate with d$\_$type = NC;
			\ElsIf {\ $\|r^j\|>\sqrt{T}\tau^{j/2}\|r^0\| $}
			\State Compute $\alpha_j, y^{j+1}$ as in the main loop above;
			\State Find $i\in\{0,\ldots,j-1\}$ such that
			\[
			(y^{j+1}-y^i)^T\bar{H}(y^{j+1}-y^i)<\varepsilon\|y^{j+1}-y^i\|^2;
			\]
			\State Set $d\leftarrow y^{j+1}-y^i$ and terminate with d$\_$type = NC;
			\EndIf
			\EndWhile
		\end{algorithmic}
	}
\end{algorithm}

The following lemma present some useful properties of Algorithm~\ref{alg:capped-CG} below, which are adopted from \cite[Lemma 3]{RNW18}.

\begin{lemma}\label{lem:ppt-cg}
Consider applying Algorithm~\ref{alg:capped-CG} with input $U = 0$ to the linear system~\eqref{indef-sys} with $g\neq 0$, $\varepsilon>0$, and $H$ being an $n\times n$ symmetric matrix. Let $d$ be the output of Algorithm \ref{alg:capped-CG} with a type specified in d$\_$type. Then the following statements hold.
\begin{enumerate}[{\rm (i)}]
\item If d$\_$type=SOL, then  $d$ satisfies
\begin{align*}
&\varepsilon\|d\|^2\le d^T(H+2\varepsilon I)d,\qquad \|d\|\le 1.1\varepsilon^{-1}\| g\|,\\
&d^T g=-d^T(H+2 \varepsilon I)d,\qquad \|(H+2\varepsilon I)d+ g\|\le \zeta \varepsilon\|d\|/2.
\end{align*}
\item If d$\_$type=NC, then $d$ satisfies $d^T g\le0$ and $d^T H d/\|d\|^2\le-\varepsilon$.
\end{enumerate}
\end{lemma}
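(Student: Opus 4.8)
The plan is to follow the analysis of \cite[Lemma~3]{RNW18}, specialized to the input $U=0$; every property listed is an invariant that Algorithm~\ref{alg:capped-CG} maintains because it runs standard conjugate gradient (CG) on the shifted system $\bar H y=-g$ with $\bar H:=H+2\varepsilon I$, while monitoring curvature along the iterates $y^j$, the search directions $p^j$, and the residuals $r^j=\bar H y^j+g$, and capping the iteration count via $T$. First I would record the standard CG identities: the $r^j$ are mutually orthogonal, the $p^j$ are $\bar H$-conjugate, $\langle r^j,p^i\rangle=0$ for $i<j$, $\langle p^j,r^j\rangle=-\|r^j\|^2$, $y^j=\sum_{i<j}\alpha_i p^i$ with $\alpha_i=\|r^i\|^2/\langle p^i,\bar H p^i\rangle$, and $\langle y^j,r^j\rangle=0$. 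I would also use that the algorithm advances past inner iteration $i$ only when $\langle p^i,\bar H p^i\rangle\ge\varepsilon\|p^i\|^2>0$, so every $\alpha_i$ it actually uses is strictly positive, and that it declares $d$ of type SOL at $y^j$ only after checking $\langle y^j,\bar H y^j\rangle\ge\varepsilon\|y^j\|^2$.

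For the SOL case the first inequality $\varepsilon\|d\|^2\le d^T\bar H d$ is precisely that termination check, while $d^Tg=-d^T\bar H d$ follows from $\langle y^j,r^j\rangle=0$ and $r^j=\bar H y^j+g$. Combining the two with Cauchy--Schwarz gives $\varepsilon\|d\|^2\le -d^Tg\le\|d\|\,\|g\|$, hence $\|d\|\le\varepsilon^{-1}\|g\|\le1.1\,\varepsilon^{-1}\|g\|$ (and $d\neq0$ because $\langle y^j,g\rangle<0$, as shown below). For the residual bound, SOL termination requires $\|r^j\|\le\hat\zeta\|g\|$ with $\hat\zeta=\zeta/(3\kappa)$ and $\kappa=(U+2\varepsilon)/\varepsilon\ge2$; writing $g=r^j-\bar H y^j$ and using the in-loop guarantee $\|Hy^j\|\le U\|y^j\|$ yields $\|g\|\le\|r^j\|+(U+2\varepsilon)\|y^j\|=\|r^j\|+\kappa\varepsilon\|y^j\|$, and solving for $\|r^j\|$ using $\kappa\ge2$ and $\zeta<1$ gives $\|r^j\|\le\zeta\varepsilon\|d\|/2$, i.e.\ $\|(H+2\varepsilon I)d+g\|\le\zeta\varepsilon\|d\|/2$.

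For the NC case, the curvature bound $d^THd/\|d\|^2\le-\varepsilon$ holds in every NC branch because each terminates on a test of the form $v^T\bar H v<\varepsilon\|v\|^2$ (with $v$ equal to $p^0$, $y^j$, $p^j$, or $y^{j+1}-y^i$), which rearranges to $v^THv<-\varepsilon\|v\|^2$. For $d^Tg\le0$ I would first establish $\langle p^i,r^0\rangle=\langle p^i,r^i\rangle+\langle p^i,r^0-r^i\rangle=-\|r^i\|^2$, the second term vanishing by $\bar H$-conjugacy since $r^0-r^i=-\sum_{\ell<i}\alpha_\ell\bar H p^\ell$; this gives $\langle y^j,g\rangle=\sum_{i<j}\alpha_i\langle p^i,r^0\rangle=-\sum_{i<j}\alpha_i\|r^i\|^2\le0$ and $\langle p^j,g\rangle=-\|r^j\|^2\le0$, while in the residual-growth branch $\langle y^{j+1}-y^i,g\rangle=-\sum_{\ell=i}^{j}\alpha_\ell\|r^\ell\|^2\le0$ (here $\alpha_j>0$ as well, since the test $\langle p^j,\bar H p^j\rangle<\varepsilon\|p^j\|^2$ precedes that branch), and for $d=p^0=-g$ trivially $d^Tg=-\|g\|^2\le0$.

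The one genuinely substantive point, which I would invoke from the analysis of \cite{RNW18} rather than reprove, is that in the residual-growth branch an index $i\in\{0,\dots,j-1\}$ with $(y^{j+1}-y^i)^T\bar H(y^{j+1}-y^i)<\varepsilon\|y^{j+1}-y^i\|^2$ must exist: otherwise $\bar H$ would be uniformly positive definite, with modulus at least $\varepsilon$, on the span of the relevant iterate differences, which via the standard Chebyshev-polynomial CG estimate forces $\|r^j\|\le\sqrt{T}\,\tau^{j/2}\|r^0\|$ and contradicts the branch's entry condition $\|r^j\|>\sqrt{T}\,\tau^{j/2}\|r^0\|$. Making this estimate precise while carrying along the adaptively updated $\kappa$ and the definitions of $\tau$ and $T$ is the main obstacle; the remainder is direct manipulation of the CG identities.
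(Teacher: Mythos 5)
Your reconstruction is correct. The paper itself does not prove Lemma~\ref{lem:ppt-cg}; it is stated as ``adopted from \cite[Lemma~3]{RNW18},'' so there is no internal proof to compare against. Your argument faithfully reproduces the CG-theory reasoning that underlies the cited result: the SOL inequalities fall out of the algorithm's termination guards together with the standard Krylov identities $\langle y^j,r^j\rangle=0$ and $r^j=\bar H y^j+g$; the NC curvature bound is just the rearranged termination test $v^T\bar H v<\varepsilon\|v\|^2$; and your identity $\langle p^i,r^0\rangle=-\|r^i\|^2$ (via $\bar H$-conjugacy) cleanly handles $d^Tg\le0$ in every NC branch, including $d=y^{j+1}-y^i$ where the positivity of $\alpha_j$ is justified because the $(p^j)^T\bar H p^j\ge\varepsilon\|p^j\|^2$ check precedes the residual-growth branch. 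Your residual bound derivation ($\|r^j\|\le\hat\zeta\|g\|$, $\|g\|\le\|r^j\|+\kappa\varepsilon\|y^j\|$, then solving using $\hat\zeta=\zeta/(3\kappa)\le1/6$) checks out, as does deferring the existence of the bad index $i$ in the residual-growth branch to the Chebyshev estimate in \cite{RNW18}, which is exactly what the paper implicitly does by citation. One small remark: your Cauchy--Schwarz argument actually yields the sharper bound $\|d\|\le\varepsilon^{-1}\|g\|$ rather than $1.1\,\varepsilon^{-1}\|g\|$; the stated constant $1.1$ is simply a looser version and is trivially implied.
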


The following theorem presents the iteration complexity of Algorithm~\ref{alg:capped-CG}.

\begin{theorem}[{\bf iteration complexity of Algorithm \ref{alg:capped-CG}}]\label{lem:capped-CG}
Consider applying Algorithm~\ref{alg:capped-CG} with input $U = 0$ to the linear system~\eqref{indef-sys} with $g\neq 0$, $\varepsilon>0$, and $H$ being an $n\times n$ symmetric matrix. Then the number of iterations of Algorithm~\ref{alg:capped-CG} is at most
\[
\min\Big\{n,\Big\lceil \big((\|H\|/\varepsilon)^{1/2}+2\big)\psi(\|H\|/\varepsilon)\Big\rceil\Big\}=\widetilde{\cO}\big(\min\big\{n,(\|H\|/\varepsilon)^{1/2}\big\}\big),
\]
where $\psi(t)=\ln(144((t+2)^{1/2}+1)^2(t+2)^6/\zeta^2)$.
\end{theorem}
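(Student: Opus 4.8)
The bound is the minimum of two quantities, and I would establish each separately, following the analysis of Algorithm~\ref{alg:capped-CG} in \cite[Section~3.1]{RNW18}. The first argument, $n$, is the classical finite-termination property of conjugate-gradient-type recursions in exact arithmetic. The second argument reflects the built-in ``slow-progress'' safeguard of the capped CG iteration: if the method has not yet stopped at iteration $j$, then \emph{all} of its termination tests have failed, which traps the relative residual $\|r^j\|/\|r^0\|$ between $\hat\zeta$ and $\sqrt{T}\,\tau^{j/2}$, and this forces $j$ below an explicit threshold. The final step is to bound the internal parameters $\kappa,\tau,\hat\zeta,T$ in terms of $\|H\|/\varepsilon$.

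\textbf{The bound by $n$.} As long as Algorithm~\ref{alg:capped-CG} has not terminated at iteration $j$, the curvature tests that have been checked guarantee $(p^i)^T\bar H p^i\ge\varepsilon\|p^i\|^2>0$ for every $i\le j$, so each step length $\alpha_i$ is well defined and positive and the iterates $y^i$, residuals $r^i$, and directions $p^i$ satisfy the usual conjugate-gradient identities; in particular $r^0,\dots,r^j$ are pairwise orthogonal, and they are all nonzero because the test $\|r^j\|\le\hat\zeta\|r^0\|$ has not fired. Hence $j+1\le n$, so the \textbf{while} loop executes at most $n$ times.

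\textbf{The bound by $\widetilde{\cO}\big((\|H\|/\varepsilon)^{1/2}\big)$.} Suppose the method reaches iteration $j$ without terminating. Then the test ``$\|r^j\|\le\hat\zeta\|r^0\|$'' has failed, so $\|r^j\|>\hat\zeta\|r^0\|$, and the test ``$\|r^j\|>\sqrt{T}\tau^{j/2}\|r^0\|$'' has failed, so $\|r^j\|\le\sqrt{T}\tau^{j/2}\|r^0\|$; together these give $\hat\zeta<\sqrt{T}\,\tau^{j/2}$, i.e.
\[
j<\frac{2\ln(\sqrt{T}/\hat\zeta)}{\ln(1/\tau)}.
\]
I would then bound the right-hand side using only the definitions in Algorithm~\ref{alg:capped-CG}: from $1-\tau=1/(\sqrt{\kappa}+1)$ and $-\ln(1-x)\ge x$ one gets $\ln(1/\tau)\ge 1/(\sqrt{\kappa}+1)$; from $1-\sqrt{\tau}=(1-\tau)/(1+\sqrt{\tau})\ge(1-\tau)/2$ one gets $T\le 16\kappa^{4}(\sqrt{\kappa}+1)^{2}$; and $\hat\zeta=\zeta/(3\kappa)$. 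Substituting, the iteration count is at most a universal multiple of $(\sqrt{\kappa}+1)\ln\!\big(\kappa(\sqrt{\kappa}+1)/\zeta\big)$. Finally I would bound $\kappa$: the only updates of $U$ set it to $\|Hv\|/\|v\|$ for $v\in\{p^i,y^i,r^i\}$, which never exceeds $\|H\|$, so at termination $\kappa=(U+2\varepsilon)/\varepsilon\le\|H\|/\varepsilon+2$; monotonicity in $\kappa$ then yields the stated bound with $\psi(t)=\ln\!\big(144((t+2)^{1/2}+1)^{2}(t+2)^{6}/\zeta^{2}\big)$, and the $\widetilde{\cO}$ form is immediate since $\psi$ is logarithmic and $(\|H\|/\varepsilon+2)^{1/2}+1=\cO\big((\|H\|/\varepsilon)^{1/2}\big)$.

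\textbf{Main obstacle.} The genuinely delicate part is not the counting above but the structural facts that make Algorithm~\ref{alg:capped-CG} well posed: that whenever the slow-progress test $\|r^j\|>\sqrt{T}\tau^{j/2}\|r^0\|$ triggers there really exists an index $i\in\{0,\dots,j-1\}$ with $(y^{j+1}-y^i)^T\bar H(y^{j+1}-y^i)<\varepsilon\|y^{j+1}-y^i\|^2$, and that the CG recursion stays valid up to termination so that the orthogonality relations invoked for the $n$-bound hold. These are exactly the lemmas underlying \cite[Section~3.1]{RNW18}, which I would cite rather than reprove. Beyond that, the remaining work is only careful bookkeeping of the ceilings and the absolute constants so as to land precisely on the displayed expression.
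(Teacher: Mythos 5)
Your proposal is correct in spirit but takes a genuinely different, more self-contained route than the paper. The paper's proof simply invokes two results from \cite{RNW18}: it cites Lemma~1 there for the statement that the iteration count is bounded by $\min\{n,J(U,\varepsilon,\zeta)\}$ where $J$ is the smallest integer with $\sqrt{T}\tau^{J/2}\le\hat\zeta$, and it cites Section~3.1 there for the closed-form estimate $J(U,\varepsilon,\zeta)\le\lceil(\sqrt{\kappa}+1/2)\ln(144(\sqrt{\kappa}+1)^2\kappa^6/\zeta^2)\rceil$. The only new work in the paper is the elementary manipulation $\sqrt{\kappa}=\sqrt{U/\varepsilon+2}\le(U/\varepsilon)^{1/2}+3/2$ together with $U\le\|H\|$, which turns $(\sqrt{\kappa}+1/2)$ into $((\|H\|/\varepsilon)^{1/2}+2)$ and $\kappa$ into $\|H\|/\varepsilon+2$. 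You, by contrast, re-derive both ingredient bounds from the algorithm's definitions: the $n$-bound from the orthogonality of the CG residuals under the curvature safeguards, and the $J$-bound by trapping $\|r^j\|/\|r^0\|$ between the SOL-test threshold and the slow-progress threshold, then estimating $\kappa,\tau,\hat\zeta,T$. That buys a more self-contained argument that would survive even if one did not have \cite{RNW18} at hand, at the cost of re-proving results the paper treats as a black box.

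One caveat on the bookkeeping you defer to the end: the bound you chose, $\ln(1/\tau)\ge 1/(\sqrt{\kappa}+1)$ (from $-\ln(1-x)\ge x$), yields the multiplier $\sqrt{\kappa}+1$, which after $\sqrt{\kappa}\le(\|H\|/\varepsilon)^{1/2}+3/2$ gives $(\|H\|/\varepsilon)^{1/2}+5/2$, not the stated $(\|H\|/\varepsilon)^{1/2}+2$. To land on the exact displayed constant you need the slightly sharper estimate $\ln(1/\tau)=\ln\bigl(1+\kappa^{-1/2}\bigr)\ge\frac{2\kappa^{-1/2}}{2+\kappa^{-1/2}}=\frac{2}{2\sqrt{\kappa}+1}$, i.e.\ $1/\ln(1/\tau)\le\sqrt{\kappa}+1/2$, which is exactly the multiplier that appears in the bound the paper cites from \cite{RNW18}. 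With that replacement your chain of inequalities reproduces $(\sqrt{\kappa}+1/2)\ln(144\kappa^6(\sqrt{\kappa}+1)^2/\zeta^2)$ and hence the stated bound; the rest of your estimates ($T\le 16\kappa^4(\sqrt{\kappa}+1)^2$, $\sqrt{T}/\hat\zeta\le 12\kappa^3(\sqrt{\kappa}+1)/\zeta$, and $U\le\|H\|$ so $\kappa\le\|H\|/\varepsilon+2$) are correct and produce the constant $144$ and the exponents in $\psi$ as claimed.
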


\begin{proof}
From \cite[Lemma~1]{RNW18}, we know that the number of iterations of Algorithm \ref{alg:capped-CG} is bounded by $\min\{n,J(U,\varepsilon,\zeta)\}$, where $J(U,\varepsilon,\zeta)$ is the smallest integer $J$ such that $\sqrt{T}\tau^{J/2}\le\hat{\zeta}$, with $U,\hat{\zeta},T$ and $\tau$ being the values returned by Algorithm \ref{alg:capped-CG}. In addition, it was shown in \cite[Section 3.1]{RNW18} that
$J(U,\varepsilon,\zeta)\le \lceil (\sqrt{\kappa}+1/2)\ln(144(\sqrt{\kappa}+1)^2\kappa^6/\zeta^2)\rceil$,
where $\kappa=U/\varepsilon+2$ is an output by Algorithm \ref{alg:capped-CG}. Also, we can observe that $\sqrt{\kappa}\le (U/\varepsilon)^{1/2}+\sqrt{2}\le (U/\varepsilon)^{1/2} +3/2$. Combining these, we obtain that $J(U,\varepsilon,\zeta)\le \lceil[(U/\varepsilon)^{1/2}+2]\ln(144( (U/\varepsilon+2)^{1/2}+1)^2(U/\varepsilon+2)^6/\zeta^2)\rceil$.
Notice from Algorithm~\ref{alg:capped-CG} that the output $U\le\|H\|$. Using these, we obtain the conclusion as desired.
\end{proof}

\section{A randomized Lanczos based minimum eigenvalue oracle} \label{appendix:meo}
In this part we present the randomized Lanczos method proposed in \cite[Section~3.2]{RNW18}, which can be used as a minimum eigenvalue oracle for Algorithms~\ref{alg:NCG-pd} and \ref{alg:NCG}. As briefly discussed in Section~\ref{sec:pd-ncg}, this oracle outputs either a sufficiently negative curvature direction of $H$ or a certificate that $H$ is nearly positive semidefinite with high probability. More detailed motivation and explanation of it can be found in \cite[Section~3.2]{RNW18}.

\begin{algorithm}[h]
	{\small
	\caption{A randomized Lanczos based minimum eigenvalue oracle}
	\label{pro:meo}
		\noindent\textit{Input}: symmetric matrix $H\in\bR^{n\times n}$, tolerance $\varepsilon>0$, and probability parameter $\delta\in(0,1)$.\\
		\noindent\textit{Output:} a sufficiently negative curvature direction $v$ satisfying $v^THv\le-\varepsilon/2$ and $\|v\|=1$; or a certificate that $\lambda_{\min}(H)\ge-\varepsilon$ with probability at least  $1-\delta$.\\
		Apply the Lanczos method \cite{KW92LR} to estimate $\lambda_{\min}(H)$ starting with a random vector uniformly generated on the unit sphere, and run it for at most
		\begin{equation}\label{N-iter}
			N(\varepsilon,\delta):=\min\left\{n,1+\left\lceil\frac{\ln(2.75n/\delta^2)}{2}\sqrt{\frac{\|H\|}{\varepsilon}}\right\rceil\right\}
		\end{equation}
		iterations. If a unit vector $v$ with $v^THv \le -\varepsilon/2$ is found at some iteration, terminate immediately and return $v$.
	}
\end{algorithm}

The following theorem justifies that Algorithm~\ref{pro:meo} is a suitable minimum eigenvalue oracle for Algorithms~\ref{alg:NCG-pd} and \ref{alg:NCG}. Its proof is identical to that of \cite[Lemma~2]{RNW18} and thus omitted.

\begin{theorem}[{\bf iteration complexity of Algorithm~\ref{pro:meo}}]\label{rand-Lanczos}
	Consider Algorithm~\ref{pro:meo} with tolerance $\varepsilon>0$, probability parameter $\delta\in(0,1)$, and symmetric matrix $H\in\bR^{n\times n}$ as its input. Then it either finds a sufficiently negative curvature direction $v$ satisfying $v^THv\le-\varepsilon/2$ and $\|v\|=1$ or certifies that $\lambda_{\min}(H)\ge-\varepsilon$ holds with probability at least  $1-\delta$  in at most $N(\varepsilon,\delta)$ iterations, where $N(\varepsilon,\delta)$ is defined in \eqref{N-iter}.
\end{theorem}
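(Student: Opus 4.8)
The plan is to follow the argument of \cite[Lemma~2]{RNW18}, which itself rests on the classical randomized Lanczos eigenvalue estimates of Kuczy\'nski and Wo\'zniakowski \cite{KW92LR}. First I would recall the standard structural facts about the Lanczos iteration: run on $H$ for $k$ steps from a starting vector $b$, it works inside the Krylov subspace $\mathcal{K}_k(H,b)=\mathrm{span}\{b,Hb,\dots,H^{k-1}b\}$ and produces the smallest Ritz value $\xi_k:=\min_{0\neq p\in\mathcal{K}_k(H,b)}p^THp/\|p\|^2$ together with a unit Ritz vector $v_k$ attaining it, so that $\xi_k\ge\lambda_{\min}(H)$ and $v_k^THv_k=\xi_k$ hold deterministically, and $\xi_k$ is nonincreasing in $k$. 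Consequently, the only way Algorithm~\ref{pro:meo} can output a vector is to return such a $v_k$ with $\xi_k\le-\varepsilon/2$ at some iteration $k\le N(\varepsilon,\delta)$, which is indeed a unit vector with $v^THv\le-\varepsilon/2$; if no such iteration occurs, the method stops after $N(\varepsilon,\delta)$ steps with $\xi_{N(\varepsilon,\delta)}>-\varepsilon/2$ and issues the certificate.

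The heart of the proof is the probabilistic estimate that, when $b$ is uniform on the unit sphere, $\xi_{N(\varepsilon,\delta)}-\lambda_{\min}(H)\le\varepsilon/2$ with probability at least $1-\delta$. To obtain this I would pass to the positive semidefinite matrix $A:=\|H\|I-H$, noting that $\mathcal{K}_k(A,b)=\mathcal{K}_k(H,b)$ (a shift leaves the Krylov space unchanged), that the largest Ritz value $\widehat\theta_k$ of $A$ on this subspace satisfies $\widehat\theta_k=\|H\|-\xi_k$, and that $\lambda_{\max}(A)=\|H\|-\lambda_{\min}(H)\le 2\|H\|$; hence $\xi_k-\lambda_{\min}(H)=\lambda_{\max}(A)-\widehat\theta_k$. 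The Kuczy\'nski--Wo\'zniakowski tail bound then says that the relative error $(\lambda_{\max}(A)-\widehat\theta_k)/\lambda_{\max}(A)$ exceeds a target $\eta\in(0,1)$ with probability at most a $\mathrm{poly}(n)\cdot e^{-(2k-1)\sqrt{\eta}}$ quantity. Choosing $\eta$ of order $\varepsilon/\|H\|$ turns the relative error into the absolute bound $\lambda_{\max}(A)-\widehat\theta_k\le\varepsilon/2$, and solving $\mathrm{poly}(n)\cdot e^{-(2k-1)\sqrt{\varepsilon/\|H\|}}\le\delta$ for $k$ gives precisely the count $k=1+\lceil\tfrac12\ln(2.75n/\delta^2)\sqrt{\|H\|/\varepsilon}\rceil$, capped at $n$, appearing in \eqref{N-iter}.

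It then remains to assemble the dichotomy. On the event of probability at least $1-\delta$ on which $\xi_{N(\varepsilon,\delta)}\le\lambda_{\min}(H)+\varepsilon/2$: if during the run some unit Ritz vector satisfied $v^THv\le-\varepsilon/2$, the output is a bona fide sufficiently negative curvature direction; otherwise $\xi_{N(\varepsilon,\delta)}>-\varepsilon/2$, so $\lambda_{\min}(H)\ge\xi_{N(\varepsilon,\delta)}-\varepsilon/2>-\varepsilon$, which is exactly the certified statement. In all cases the procedure halts within $N(\varepsilon,\delta)\le n$ iterations by construction, so the stated iteration bound is immediate.

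I expect the main obstacle to be the second step, the randomized convergence rate of Lanczos. Its two delicate ingredients are (i) a Chebyshev-polynomial argument showing that within $O(1/\sqrt{\eta})$ Lanczos steps the Krylov subspace contains a vector whose Rayleigh quotient is within relative accuracy $\eta$ of $\lambda_{\max}(A)$, \emph{provided} the starting vector has a non-negligible component along the leading eigenspace, and (ii) an anti-concentration estimate bounding the probability that a uniform unit vector has too small a component there, which is what injects the $\mathrm{poly}(n)$ prefactor and hence the $\ln(n/\delta)$ factor into $N(\varepsilon,\delta)$. Since both are established in full in \cite{KW92LR} and reused verbatim in \cite[Lemma~2]{RNW18}, I would cite them directly rather than reprove them, and devote the written proof only to the bookkeeping in the first and third paragraphs above.
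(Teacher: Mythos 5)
Your proposal is correct and matches the paper's approach: the paper explicitly states that the proof is ``identical to that of \cite[Lemma~2]{RNW18} and thus omitted,'' and what you have written is precisely a careful outline of that cited argument (reduction to the smallest Ritz value, shift to the PSD matrix $\|H\|I-H$, invocation of the Kuczy\'nski--Wo\'zniakowski tail bound with relative accuracy of order $\varepsilon/\|H\|$, and the final dichotomy). No gaps; the one point worth tightening in a written-out version is the explicit choice $\eta=\varepsilon/(4\|H\|)$ so that $\lambda_{\max}(A)\le2\|H\|$ converts the relative bound into the absolute bound $\varepsilon/2$.
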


Notice that $\|H\|$ is required in Algorithm~\ref{pro:meo}. In general, computing $\|H\|$  may not be cheap when $n$ is large. Nevertheless, $\|H\|$ can be efficiently estimated via a randomization scheme with high confidence (e.g., see the discussion in \cite[Appendix~B3]{RNW18}).

\end{document}